\newtheorem{theorem}{Theorem}[]
\newtheorem{lemma}[theorem]{Lemma}
\newtheorem{proposition}[theorem]{Proposition}
\newtheorem{remark}[theorem]{Remark}
\def \Dm {M}
\def \Rm {\mathbb{R}}
\def \Sm {\mathbb{S}}
\def \Zm {\mathbb{Z}}
\def\B{\mathcal{B}}
\def\C{\mathcal{C}}
\def\D{\mathcal{D}}
\def\SS{\mathcal{S}}
\def\R{\mathcal{R}}
\def\V{\mathcal{V}}
\newcommand{\bd}{\mathbf d}
\newcommand{\bff}{ {\bf f}}
\newcommand{\cout}[1]{}
\newcommand{\x}{\mathrm{x}}
\newcommand{\btheta}{\boldsymbol \theta}
\newcommand{\sgn}[1]{\,{\rm sign}(#1)}
\newcommand{\dprod}[2]{\langle{#1},{#2}\rangle}
\newcommand{\dbar}{\overline{\partial}}
\newcommand{\zbar}{\overline{z}}
\newcommand{\tspan}{\text{span}\ }
\newcommand{\arXiv}[1]{ {\tt \href{http://arxiv.org/abs/#1}{arXiv:#1}} }
\newdimen\bibindent
\renewenvironment{thebibliography}[1]
  {\par\bigskip\footnotesize
   \section*{\centering\normalfont\footnotesize\MakeUppercase{\refname}}
   \@mkboth{\MakeUppercase{\refname}}{\MakeUppercase{\refname}}
   \list{\@biblabel{\arabic{enumi}}}%
        {\settowidth\labelwidth{\@biblabel{#1}}%
         \leftmargin\labelwidth
         \advance\leftmargin\labelsep
         \advance\leftmargin\bibindent
         \itemindent -\bibindent
         \listparindent \itemindent
         \parsep \z@
         \usecounter{enumi}%
         \let\p@enumi\@empty
         }%
     \renewcommand\newblock{\hskip .11em \@plus.33em \@minus.07em}%
     \sloppy\clubpenalty4000\widowpenalty4000%
     \frenchspacing\footnotesize}
  {\def\@noitemerr
     {\@latex@warning{Empty `thebibliography' environment}}%
   \endlist}
\title{Efficient tensor tomography in fan-beam coordinates}
\author{Fran\c{c}ois Monard \thanks{Department of Mathematics, University of Michigan, 2074 East Hall, 530 Church Street, Ann Arbor, MI 48109-1043. Email: monard@umich.edu }}
\date{}
\begin{document}
\maketitle

\begin{abstract}
    We propose a thorough analysis of the tensor tomography problem on the Euclidean unit disk parameterized in fan-beam coordinates. This includes, for the inversion of the Radon transform over functions, using another range characterization first appearing in \cite{Pestov2004} to enforce in a fast way classical moment conditions at all orders. When considering direction-dependent integrands (e.g., tensors), a problem where injectivity no longer holds, we propose a suitable representative (other than the traditionally sought-after solenoidal candidate) to be reconstructed, as well as an efficient procedure to do so. Numerical examples illustrating the method are provided at the end.
\end{abstract}

\section{Introduction}

The present work proposes a detailed account of the {\em tensor tomography problem} (TTP) on the Euclidean unit disk in fan-beam coordinates, including inversion procedures (modulo kernel) and full range description of the operator over tensors of arbitrary order. 

The tensor tomography problem, posed on a non-trapping Riemannian surface $(M,g)$, consists of (i) determining what part of a symmetric $m$-covariant tensor $f$ ($m\ge 0$) is reconstructible from knowledge of its geodesic X-ray transform (XRT) $If$ (the collection of its integrals along all geodesics passing through that surface), and (ii) how to reconstruct a faithful representative of $f$ from $If$ modulo the kernel of $I$. Such a problem arises for its applications to imaging sciences as well as for its ties to integral geometric problems such as spectral and boundary rigidity, and Calder\'on's inverse conductivity problem. Particular applications are Computerized Tomography ($m=0$), Ultrasound Doppler Tomography ($m=1$, see \cite{Anikonov1997}), deformation boundary rigidity ($m=0,2$, see \cite{Sharafudtinov2007}) and imaging of elastic media with slightly anisotropic properties ($m=4$, see \cite{Sharafudtinov1994}). 

Regarding question (i), for any $m\ge 1$, this problem is famously non-injective, as the {\em inner derivative} operator $\bd$ (see \cite{Sharafudtinov1994} for details) generates an element in the kernel of the XRT out of any symmetric tensor vanishing at the boundary of $M$. In this regard, a way of reformulating question (i) is to ask whether these elements are the only ones in the kernel, a question which received positive answers in the case of surfaces of revolution satisfying Herglotz' condition in \cite{Sharafudtinov1997} (in fact, all dimensions $\ge 2$ there), and in the case of {\em simple\footnote{A non-trapping Riemannian manifold (dim $\ge 2$) with boundary is called {\em simple} if it has no conjugate points and its boundary is strictly convex.} surfaces} in \cite{Paternain2011a}. Neither case is a subset of the other though both cover the Euclidean disk, studied in the present article. Let us mention that further dynamical tools have recently allowed results on cases with trapped geometry \cite{Guillarmou2014,Guillarmou2014a,Guillarmou2015}, Fourier analysis and representation theory have led to progress on Radon transforms on compact Lie groups and finite groups \cite{Ilmavirt2015,Ilmavirt2014,Ilmavirt2014a}, and microlocal analytic approaches have led to results on general manifolds with conjugate points \cite{Monard2013b,Stefanov2012a,Stefanov2014,Stefanov2013,Uhlmanna,Holman2015}. 

On studying question (ii), the solenoidal-potential decomposition of an $m$ tensor $f = \bd h + f^s$ (with $h$ an $m-1$ tensor vanishing on $\partial M$), true for any $L^2$ tensor $f$ and where each summand is continuous in terms of $f$, suggests that, since $If = If^s$, the ``solenoidal'' tensor $f^s$ is a good candidate as a representative to be reconstructed from data $If$. 
In this direction, Sharafutdinov provided a reconstruction of $f^s$ in Euclidean free space in any dimension in \cite[Theorem 2.12.2]{Sharafudtinov1994}, Kazantsev and Bukhgeim proposed in \cite{Kazantsev2004} a reconstruction algorithm for a tensor of arbitrary order in the case of the Euclidean unit disk, see also \cite{Kazantsev2006,Svetov2013,Derevtsov2005,Derevtsov2011} for further works on the topic and the recent work \cite{Derevtsov2015}. Another approach is to make use of the theory of A-analytic functions {\it \`a la} Bukhgeim, which has successfully led to range characterizations and inversion formulas, most recently in \cite{Sadiq2015,Sadiq2013,Sadiq2014} for the ray transform in Euclidean convex domains over functions, vector fields and two-tensors, including attenuation coefficients. 

The solenoidal representative is arguably not an easy quantity to work with: Sharafutdinov's formulas involve iterated use of the non-local operator $\Delta^{-1} \bd^2$, where $\Delta$ denotes componentwise Laplacian, and the expression of $\bd^2$ depends on the tensor order. A first salient feature of this paper is to exploit another decomposition, arising naturally for instance when performing Fourier analysis on tangent fibers: doing this comes with a different inner product structure than the one traditionally used on tensor fields, and in particular, this motivates another tensor decomposition than the solenoidal-potential one. This decomposition has proved to be more natural in earlier contexts such as the search for conformal Killing tensor fields (see for instance \cite{Sharafudtinov2007,Sharafutdinov2014} or \cite[Theorem 1.5]{Dairbekov2011}). Here we exploit this other decomposition to provide, for any tensor field, an element to be reconstructed with some advantages: its Fourier components are made up of a function to be inverted for via ``traditional'' inverse X-ray transform, plus additional terms which can be easily expressed in terms of analytic functions, the reconstruction of which via {\it ad hoc} Cauchy integrals is straighforward and efficient; in addition, the X-ray transforms of each component live on $L^2$-orthogonal subpaces of data space, and the reconstruction formulas given below encode projection onto each subspace before reconstruction, without requiring intermediate processing. In the case of tensors of odd order, special care is given to the reconstruction of solenoidal one-forms supported up to the boundary, for which inversions in \cite{Pestov2004} only covered the case of compactly supported ones. Some numerical reconstructions are presented at the end of the article. We also briefly discuss the fact that the decomposition presented requires choosing a ``central frequency'', the choice of which may also yield other decompositions whose reconstruction would be equally fast to implement. 

As a second feature of this article, understanding the behavior of data space under this decomposition has also shed some light on the range characterization of the XRT over functions (call it $I_0$ for the case $m=0$) in fan-beam coordinates. More specifically, we prove an equivalence between a range characterization of $I_0$ given by Pestov and Uhlmann in \cite{Pestov2004} on simple Riemannian surfaces, and the classical moment conditions due to Gelfand and Graev \cite{GelfandGraev1960} and Helgason and Ludwig \cite{Helgason1999,Ludwig1966}, translated into fan-beam coordinates. The latter conditions can also be regarded as countably many algebraic conditions characterizing the range of $I_0$ over functions in the so-called parallel geometry. Translating these conditions into fan-beam coordinates has been and continues to be an object of active study \cite{Chen2005,Clarkdoyle2013}, in particular for their many applications to the imaging modalities of Computerized tomography and Positron Emission Tomography: motion artifact reduction as in, e.g., \cite{Welch1998,Yu2007}; consistent extrapolation of truncated data as in, e.g., \cite{VanGompel2006, Xu2010}; monitoring of CT systems for faulty detector channels \cite{Patch2001}; see also the detailed introduction in \cite{Clackdoyle2013} on the matter. In the present article, we show that another range characterization provided in \cite{Pestov2004} is {\em equivalent}\footnote{It is, in fact, a generalization, since parallel coordinates do not exist on surfaces without symmetries.} to the moment conditions for functions with compact support. Additionally, appropriate boundary operators (some of them, labelled as $P_\pm$ first appearing in \cite{Pestov2004}; others to be introduced below, found by the author) allow to enforce in a fast way {\em all} moment conditions simultaneously. 
Such operators involve the interplay between the scattering relation and the Hilbert transform on the tangent circles at the boundary of $M$. 

It is the author's hope that such an equivalence bridges a gap between the two systems of coordinates and offers a new viewpoint on the X-ray transform and the moment conditions in fan-beam coordinates, whose theoretical focus in the Euclidean setting is often shifted to parallel coordinates. This is because parallel geometry, unlike fan-beam, enjoys the Central Slice Theorem, which allows for a deeper understanding of discretization issues and regularization theory via accurate and efficient convolution methods in data space \cite{Natterer2001,Epstein2008}. This progress in fan-beam coordinates is partly motivated by the fact that dealing with such coordinates becomes almost unavoidable when considering the TTP on Riemannian surfaces without symmetries.


Finally, the expliciteness of the present results owes much to the fact that the scattering relation on a Euclidean disk admits a very explicit expression, thereby simplifying the description of the action of the boundary operators mentioned above. A generalization of these results to simple Riemannian surfaces is currently under study and will appear in future work. 

We now discuss the main results and give an outline of the remainder of the article. 

\section{Statement of the main results}

We consider $\Dm = \{\x = (x,y)\in \Rm^2: x^2 + y^2 <1\}$ the Euclidean unit disk, with boundary $\partial \Dm = \{e^{i\beta}, \beta\in \Sm^1\}$. Here and below, we may identify at will a point $\x = (x,y)\in \Dm$ with its complex representative $z = x + iy$. Define the usual Sobolev spaces $L^2(\Dm)$, $H^1(\Dm)$ and $H^1_0(\Dm)$, as well as 
\begin{align*}
    \dot{H}^1(\Dm) := \left\{ f\in H^1(\Dm), \quad \int_{\Sm^1} f(e^{i\beta})\ d\beta = 0 \right\}.
\end{align*}
The domain $\Dm$ has a unit circle bundle $S\Dm \cong \{(\x,\theta)\in \Dm\times \Sm^1\}$ where $\theta$ identifies the unit tangent vector $\binom{\cos\theta}{\sin\theta}$ at $\x$. $S\Dm$ has influx/outflux boundaries\footnote{The reader should be aware that opposite definitions of $\partial_\pm SM$ can appear in the litterature. Here, we follow the convention as in, e.g., \cite{Pestov2004,Monard2013a,Paternain2011a}, where ``$+$'' designates ``influx''.} 
\begin{align*}
    \partial_\pm SM = \{(\x,v) \in \partial SM,\ \x\in \partial M,\ \mp v\cdot \nu_\x >0\}, 
\end{align*}
where $\nu_\x$ is the unit outer normal at $\x\in \partial \Dm$ (here we may identify $\nu_\x = \x$). We parameterize the influx boundary in {\em fan-beam coordinates}, that is, in terms of a couple $(\beta,\alpha) \in \Sm^1\times \left( -\frac{\pi}{2}, \frac{\pi}{2} \right)$, where $\beta$ describes the point at the boundary $\x(\beta) = \binom{\cos\beta}{\sin \beta}$, and $\alpha$ denotes the direction of the tangent vector with respect to $\nu_\x$, i.e. $v = \binom{\cos(\beta+\pi+\alpha)}{\sin(\beta+\pi+\alpha)}$.

We consider the {\em X-ray transform} (XRT) $I:\C^\infty(S\Dm)\to \C^\infty (\partial_+ SM)$ defined by 
\begin{align}
    If(\beta,\alpha) &= \int_{0}^{\tau(\beta,\alpha)} f(\gamma_{\beta,\alpha}(t), \arg{(\dot\gamma_{\beta,\alpha}(t))}) \ dt \nonumber \\
    &= \int_0^{2\cos\alpha} f(e^{i\beta} + te^{i(\beta+\pi+\alpha)}, \beta+\pi+\alpha) \ dt, \quad (\beta,\alpha) \in \partial_+ SM. \label{eq:XRT}
\end{align}
While this transform is usually made continous in the $L^2(SM)\to L^2(\partial_+ SM,\cos\alpha)$ setting, we prove in Lemma \ref{lem:continuity} that it is in fact $L^2(SM)\to L^2(\partial_+ SM)$ continuous. It later becomes more convenient to construct Hilbert bases in this unweighted codomain.

\noindent {\bf Transport equations on $S\Dm$.} The ray transform can be represented as the ingoing trace $If = u|_{\partial_+ SM}$ of a solution $u(\x,\theta)$ to the following transport equation
\begin{align*}
  Xu = -f \quad (S\Dm), \qquad u|_{\partial_- SM} = 0,
\end{align*}
where we have defined the geodesic vector field on $T(S\Dm)$ by
\begin{align}
    X = \cos\theta \partial_x + \sin\theta \partial_y = \eta_+ + \eta_-, \qquad \eta_+ := e^{i\theta} \partial, \qquad \eta_- = \overline{\eta_+} = e^{-i\theta} \dbar, 
  \label{eq:X}
\end{align}
and where $\partial := \frac{1}{2} (\partial_x - i\partial_y)$. Using notation from the canonical framing of the unit circle bundle $S\Dm$, we define 
\[ X_\perp := [X,\partial_\theta] = \frac{\eta_+ - \eta_-}{i} = - (-\sin\theta \partial_x + \cos\theta \partial_y). \]
The $\Sm^1$ action on $S\Dm$ induces a Fourier decomposition on $L^2(S\Dm, \ d\x\ d\theta)$: a function $u\in L^2(S\Dm)$ can be written uniquely as
\begin{align*}
    u(\x,\theta) &= \sum_{k\in \Zm} u_k(\x) e^{ik\theta}, \quad\text{where } u_k(\x):= \frac{1}{2\pi} \int_{\Sm^1} u(\x,\theta) e^{-ik\theta}\ d\theta, \\ 
    \|u\|_{L^2(S\Dm)}^2 &= 2\pi \sum_{k\in \Zm} \|u_k\|_{L^2(\Dm)}^2,
\end{align*}

In this decomposition, a diagonal linear operator of interest is the {\em fiberwise Hilbert transform} $H:L^2(S\Dm)\to L^2(S\Dm)$, whose action on each harmonic component is described as
\begin{align*}
    H (u_k(\x) e^{ik\theta}) = -i\sgn{k}\ u_k(\x) e^{ik\theta}, \quad k\in \Zm\quad  (\text{with the convention }\sgn{0} = 0). 
\end{align*}
We write the decomposition $H= H_+ + H_-$, where $H_{+/-}$ stands for the composition of $H$ with projection onto even/odd harmonics. 

We now explain how definition \ref{eq:XRT} contains the case where one integrates tensor fields.


\noindent {\bf The correspondence between tensors and functions on $S\Dm$ with finite harmonic content.} In two dimensions, any symmetric $m$-covariant tensor may be represented as
\begin{align*}
    \bff = \sum_{k=0}^m \binom{m}{k} f_k (\x)\ \sigma(dx^{\otimes k}\otimes dy^{\otimes (m-k)}),
\end{align*}
with $\sigma$ denoting the symmetrization operator. For a curve $(\x(t), \theta(t))$ in $SM$, one traditionally defines the ray transform of $\bff$ as the integral along this curve of $\bff$ paired $m$ times with the speed vector $\binom{\cos\theta(t)}{\sin\theta(t)}$. This amounts to integrating the following function on $SM$ along said curve, in one-to-one correspondence with the tensor $\bff$
\begin{align*}
    f(\x,\theta) = \sum_{k=0}^m \binom{m}{k} f_k(\x) \cos^k\theta \sin^{m-k}\theta. 
\end{align*} 
This function can then be uniquely represented in the Fourier decomposition in $\theta$ solely in terms of the following harmonics
\begin{align*}
    f = \left\{
    \begin{array}{lr}
	f_0 + f_{\pm 2}e^{\pm 2i\theta} + \cdots + f_{\pm m} e^{\pm im\theta} &\quad (m \text{ even}), \\
	f_{\pm 1}e^{\pm i\theta} + f_{\pm 3}e^{\pm 3i\theta} + \cdots + f_{\pm m}e^{\pm im\theta} &\quad (m \text{ odd}).
    \end{array}
    \right.
\end{align*}
For convenience, we still call such elements $m$-tensors and define $S^m$ the subspace of $m$-tensors in $L^2(S\Dm)$. In this identification, we have the following correspondences $\bd \bff \leftrightarrow Xf$, and for $g\in H^1(\Dm)$, $\bd g \equiv dg \leftrightarrow Xg$ and $\star dg \leftrightarrow X_\perp g$ ($\star$: Hodge star operator). In particular, the Helmholtz-Hodge decomposition of one-forms reads as follows: a function $w\in L^2(S\Dm)$ of the form $w = w_1 + w_{-1}$ decomposes uniquely in the form 
\begin{align}
    w = Xf + X_\perp g, \qquad f\in H^1_0(\Dm), \qquad g\in \dot{H}^1(\Dm). 
    \label{eq:Hodge}
\end{align}

\noindent {\bf Main results.} The decomposition problem mentioned in the introduction is decoupled between even and odd tensors. For an integer $k$, we denote:
\begin{align}
    \begin{split}
	\ker^k \eta_+ &= \{ h(\x,\theta) = f(\x) e^{ik\theta}, \quad f\in L^2(\Dm), \quad \partial f = 0 \}, \\
	\ker^k \eta_- &= \{ h(\x,\theta) = f(\x) e^{ik\theta},\quad f\in L^2(\Dm), \quad \dbar f = 0 \}.
    \end{split}    
    \label{eq:kerk}
\end{align}


\begin{theorem}[Decomposition]\label{thm:decomp}
  For any $m$-tensor $f\in L^2(SM)$, there exists a unique $m$-tensor $g\in L^2(SM)$ such that $If = Ig$, and $g$ is of the following form: 
  \begin{itemize}
      \item[(i)] if $m=2n$ is even, then $g = g_0 + g_{\pm 2}e^{\pm i2\theta} + \cdots + g_{\pm 2n}e^{\pm i2n\theta}$ with $g_0\in L^2(M)$ 
	  and $(g_{\pm k} e^{\pm ik\theta}) \in \ker^{\pm k} \eta_{\mp}$ for all $2\le k\le n$, $k$ even. 
      \item[(ii)] If $m=2n+1$ is odd, then $g = X_\perp g_0 + g_{\pm 3}e^{\pm i3\theta} + \cdots + g_{\pm (2n+1)}e^{\pm i(2n+1)\theta}$, with $g_0\in \dot{H}^1(M)$ 
	  and $(g_{\pm k}e^{i\pm k\theta}) \in \ker^{\pm k} \eta_{\mp}$  for all $3\le k\le n$, $k$ odd. 
  \end{itemize}
  In both statements above, there exists a constant $C_m$ independent of $f,g$ such that 
  \begin{align*}
    \|g\|_{L^2(SM)} \le C_m\|f\|_{L^2(SM)}.
  \end{align*}
\end{theorem}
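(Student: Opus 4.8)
The plan is to realize the desired $g$ as $g = f - Xh$ for a suitable $(m-1)$-tensor $h$ vanishing on $\partial M$. Since any such $h$ satisfies $I(Xh) = -h|_{\partial_+ SM} = 0$ (take $u=-h$ in the transport problem defining $If$), this automatically gives $If = Ig$, and the whole statement reduces to solving for $h$ so that $g$ acquires the prescribed harmonic structure. Working in the Fourier decomposition in $\theta$ and using \eqref{eq:X}, the field $X = \eta_+ + \eta_-$ acts on harmonics by $(Xh)_k = \partial h_{k-1} + \dbar h_{k+1}$, so $Xh$ is again a tensor of the correct parity and top degree. The algebraic fact driving the construction is the ellipticity identity $\dbar\partial = \partial\dbar = \tfrac14\Delta$, which is what makes the relevant component equations uniquely solvable.

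For the even case $m = 2n$, the unknowns are the odd-harmonic components $h_{\pm1}, h_{\pm3}, \dots, h_{\pm(2n-1)}$ of $h$, and the positive and negative sides decouple (they would only interact in the unconstrained component $g_0$). On the positive side I would determine $h_1,\dots,h_{2n-1}$ recursively from the top: imposing $\dbar g_{2n}=0$ (so that $g_{2n}e^{2ni\theta}\in\ker^{2n}\eta_-$), with $g_{2n} = f_{2n} - \partial h_{2n-1}$, amounts to the Dirichlet problem $\tfrac14\Delta h_{2n-1} = \dbar f_{2n}$, $h_{2n-1}|_{\partial M}=0$, with a unique solution $h_{2n-1}\in H^1_0(M)$; then $\dbar g_{2n-2}=0$ determines $h_{2n-3}$ via $\tfrac14\Delta h_{2n-3} = \dbar f_{2n-2} - \dbar^2 h_{2n-1}$ with known right-hand side, and so on down to $h_1$. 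The negative side is identical with $\partial$ in place of $\dbar$, producing antiholomorphic components, and $g_0 = f_0 - \partial h_{-1} - \dbar h_1$ is left as the free $L^2(M)$ component. Since each step solves an elliptic Dirichlet problem with right-hand side in $H^{-1}(M)$ controlled by the already-built data, elliptic estimates propagate the bound $\|g\|_{L^2(SM)} \le C_m\|f\|_{L^2(SM)}$ through the finite chain.

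For the odd case $m=2n+1$ the same top-down recursion on $h_{\pm2},\dots,h_{\pm 2n}$ makes $g_{\pm k}e^{\pm ik\theta}\in\ker^{\pm k}\eta_\mp$ for $3\le k\le 2n+1$. The remaining $\pm1$ harmonics assemble into a one-form $w^{(0)} = (f_1-\dbar h_2)e^{i\theta} + (f_{-1}-\partial h_{-2})e^{-i\theta}$, to which I would apply the Helmholtz--Hodge decomposition \eqref{eq:Hodge} to write $w^{(0)} = X\phi + X_\perp g_0$ with $\phi\in H^1_0(M)$ and $g_0\in\dot{H}^1(M)$; setting $h_0 = \phi$ (again boundary-vanishing, since $\partial h_0 e^{i\theta}+\dbar h_0 e^{-i\theta}=Xh_0$) cancels the potential part and leaves exactly $g = X_\perp g_0 + (\text{higher harmonics})$ of the claimed form, the bound following from elliptic estimates together with the continuity of \eqref{eq:Hodge}.

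Uniqueness is where the real work lies. Two admissible representatives $g,g'$ with $Ig=Ig'$ differ by an admissible $v=g-g'\in\ker I$, so the claim is equivalent to injectivity of $I$ on tensors of the special form. I would deduce this from the known s-injectivity on the Euclidean disk, a simple surface (cf. \cite{Paternain2011a}): the kernel of $I$ on $S^m$ consists precisely of inner derivatives $\bd$ of $(m-1)$-tensors vanishing on $\partial M$, i.e. of $Xh$ with $h$ a boundary-vanishing $(m-1)$-tensor. Writing $v = Xh$ this way exhibits $v = 0 + Xh$ as a splitting with trivial special part, while $v = v + X\cdot 0$ exhibits it with trivial potential part; the uniqueness of the construction above (each Dirichlet problem and the Hodge splitting having unique solutions) forces these to agree, whence $v=0$. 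The main obstacles I anticipate are (a) ensuring the recursion is genuinely triangular with the correct $H^{-1}$/$H^1$ regularity bookkeeping so that $C_m$ is finite, and (b) the odd-order subtlety flagged in the introduction, namely correctly producing a solenoidal one-form $X_\perp g_0$ with $g_0$ only in $\dot{H}^1(M)$ (supported up to the boundary) rather than compactly supported, which is exactly what the step \eqref{eq:Hodge} must guarantee.
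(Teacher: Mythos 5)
Your proposal is correct, and its existence half is the paper's own argument in only slightly different clothing: writing $g = f - Xh$ with $h$ a boundary-vanishing tensor of the complementary parity, and solving the triangular chain of Dirichlet problems $\tfrac14\Delta h_{2n-1} = \dbar f_{2n}$, $\tfrac14\Delta h_{2n-3} = \dbar f_{2n-2} - \dbar^2 h_{2n-1}$, \dots\ is exactly what the paper does when it iterates Lemma \ref{lem:decomp} (whose omitted ``standard'' proof is precisely your Lax--Milgram step) and absorbs each term $e^{i(2n-1)\theta}v_{2n-1}$ into the transport solution $u$; your concluding Helmholtz--Hodge step \ref{eq:Hodge} on the $\pm1$ harmonics in the odd case is also the paper's, and your regularity bookkeeping ($H^{-1}$ right-hand sides, $H^1_0$ solutions, $L^2$ bounds propagating through a finite chain) matches the stability estimate in Lemma \ref{lem:decomp}. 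Where you genuinely diverge is uniqueness. The paper's Section \ref{sec:decomp} proof only establishes canonicity of the construction --- each application of Lemma \ref{lem:decomp} and of \ref{eq:Hodge} is unique \emph{given $f$} --- and uniqueness of $g$ given only the data $If$ is certified later by elementary, explicit means: the orthogonality of the ranges in Theorem \ref{thm:range} and the componentwise reconstruction formulas of Theorem \ref{thm:recons} recover every component of $g$ directly from $\D = If$. You instead import s-injectivity of the tensor X-ray transform on the disk from \cite{Paternain2011a} to write any special-form element of $\ker I$ as $Xh$ with $h$ boundary-vanishing, then kill it via uniqueness of the splitting $(\text{special form}) + X(\text{boundary-vanishing})$; this is a valid and clean argument (your triangular Dirichlet chain plus Hodge uniqueness does force the intersection of the two classes to be $\{0\}$), and it is arguably more complete at this point of the development than what the paper spells out. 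The trade-off is that you invoke a substantially deeper theorem --- stated in \cite{Paternain2011a} for smooth tensors, so for $L^2$ integrands you should acknowledge the standard density/solenoidal-projection extension (classical in the Euclidean disk) --- whereas the paper's route keeps the uniqueness entirely self-contained within its own explicit machinery.
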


As mentioned earlier, this decomposition appears naturally in the context of finding conformal Killing tensor fields on manifolds, see e.g. \cite{Sharafudtinov2007,Sharafutdinov2014} or \cite[Theorem 1.5]{Dairbekov2011}. An advantage of this representation is that the ray transforms of each component of $g$ live on orthogonal subspaces of $L^2(\partial_+ SM)$, as explained in the following theorem. For $f\in L^2(M)$, we define $I_0 f := I[f]$ (i.e. regard $f$ as a function on $SM$ constant in $\theta$), and for $h\in \dot{H}^1(M)$, we define $I_\perp h = I[X_\perp h]$.

\begin{theorem}[Range decomposition]\label{thm:range}
    For any natural integer $m$, we have the following range decompositions, orthogonal in $L^2(\partial_+ SM)$:
    \begin{align*}
	I(S^{2m}) &= I_0(L^2(M)) \stackrel{\perp}{\oplus} \bigoplus_{k=1}^m I(\ker^{2k} \eta_-) \stackrel{\perp}{\oplus} \bigoplus_{k=1}^{m} I(\ker^{-2k} \eta_+) \\
	I(S^{2m+1}) &= I_\perp( \dot{H}^1(M) ) \stackrel{\perp}{\oplus} \bigoplus_{k=1}^m I(\ker^{2k+1} \eta_-) \stackrel{\perp}{\oplus} \bigoplus_{k=1}^{m} I(\ker^{-(2k+1)} \eta_+).
    \end{align*}
\end{theorem}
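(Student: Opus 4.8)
The plan is to reduce the orthogonality claim to statements about the Fourier harmonics of the integrand and to exploit the specific ``diagonal'' structure of the decomposition furnished by Theorem~\ref{thm:decomp}. By that theorem, every element of $S^{2m}$ is equivalent (same ray transform) to a sum of one term $g_0\in L^2(M)$ (living in harmonic $0$), together with terms $g_{2k}e^{2ik\theta}\in\ker^{2k}\eta_-$ and $g_{-2k}e^{-2ik\theta}\in\ker^{-2k}\eta_+$ for $1\le k\le m$; similarly in the odd case with $X_\perp g_0$ replacing $g_0$ and odd harmonics $\pm(2k+1)$. Since $I$ annihilates nothing new beyond what Theorem~\ref{thm:decomp} already records, the three families on the right-hand side exhaust $I(S^{2m})$ (resp.\ $I(S^{2m+1})$) as sets. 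So the content of the theorem is the \emph{orthogonality} and the \emph{directness} of the sum, and the first step is to record that the summands span the range.

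\medskip

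\noindent The core of the argument is an orthogonality computation in $L^2(\partial_+SM)$. First I would express the $L^2(\partial_+SM)$ inner product of two ray transforms via the transport equation: writing $If=u|_{\partial_+SM}$ and $Iw=U|_{\partial_+SM}$ with $Xu=-f$, $XU=-w$ and vanishing outflux traces, an integration by parts (Green/Santal\'o identity on $S\Dm$) should convert $\langle If, Iw\rangle_{L^2(\partial_+SM)}$ into a bilinear pairing of $f,w$ and their solutions on $S\Dm$. The key algebraic fact to extract is that the operators $\eta_\pm=e^{\pm i\theta}\partial$ (resp.\ $e^{-i\theta}\dbar$) shift Fourier harmonics in $\theta$ by $\pm 1$, and that the adjoint structure pairs the harmonic $k$ of $u$ with the harmonic $-k$ of $U$. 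Because $g_{2k}e^{2ik\theta}\in\ker^{2k}\eta_-$ means $\partial g_{2k}=0$ and $g_{-2k}e^{-2ik\theta}\in\ker^{-2k}\eta_+$ means $\dbar g_{-2k}=0$, the transport solutions associated to distinct summands occupy distinct, non-overlapping bands of $\theta$-harmonics, and the boundary pairing vanishes unless the two summands coincide. I would carry this out harmonic-by-harmonic, using that $H$ (the fiberwise Hilbert transform, diagonal on harmonics) and the even/odd splitting $H=H_++H_-$ cleanly separate the frequencies $0, \pm2,\pm4,\dots$ (even case) or $\pm1,\pm3,\dots$ (odd case).

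\medskip

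\noindent I expect the main obstacle to be the two ``anchor'' terms, namely $I_0(L^2(M))$ in the even case and $I_\perp(\dot H^1(M))$ in the odd case, and their orthogonality to the $\ker^{\pm k}\eta_\mp$ blocks. The holomorphic/antiholomorphic blocks are genuinely rigid (their defining PDE constraint kills one of $\eta_\pm$), which forces their transport solutions into a single harmonic band and makes their mutual orthogonality almost automatic; but $g_0$ and $X_\perp g_0$ are unconstrained functions on $M$, so one must check that their ray transforms do not leak into the holomorphic bands. The natural way to close this is to identify, for each summand, precisely which $\theta$-harmonics of the transport solution $u$ survive at the boundary, and to show the anchor term contributes only to the band orthogonal to every $\ker^{\pm k}\eta_\mp$ contribution. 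For the odd anchor $I_\perp$, the relation $\star dg_0\leftrightarrow X_\perp g_0$ together with the Helmholtz--Hodge decomposition~\eqref{eq:Hodge} should be used to pin down its harmonic footprint. Once the pairwise orthogonality of all summands is established, directness of the sum follows immediately, and the displayed orthogonal decompositions are proved.
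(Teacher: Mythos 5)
Your proposal has a genuine gap at its core mechanism. The claim that ``the transport solutions associated to distinct summands occupy distinct, non-overlapping bands of $\theta$-harmonics'' is false, and with it the plan to get orthogonality from a soft Green/Santal\'o pairing. A solution of $Xu=-f$ with $u|_{\partial_- SM}=0$ spreads over \emph{all} fiber harmonics even when $f$ sits in a single one (the one-sided boundary condition destroys any band structure: the in-band primitive of a holomorphic source differs from the true solution by a first integral of the flow, which has full harmonic content). Correspondingly, the boundary data of the various summands overlap massively in naive frequency content: in the bases $\phi_{k,l}$, $\phi'_{k,l}$ of the paper, $I_0(L^2(M))$ fills the whole lattice region $\{u'_{p,q}: q\ge 0,\ p\le q\}$ (Proposition~\ref{prop:SVDP} plus the Pestov--Uhlmann characterization), containing arbitrarily high frequencies in both $\beta$ and $\alpha$, while $I(\ker^{2k}\eta_-)$ sits on the single diagonal line $\{u'_{2k+j,\,k+j}\}_{j\ge0}$ (Proposition~\ref{prop:rangesker}, via the explicit expansion \ref{eq:I0holo} and the shift identity \ref{eq:relation}). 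Orthogonality is thus an \emph{edge} phenomenon in the $(p,q)$ lattice --- the holomorphic lines $p-q=k\ge1$ lie just outside the boundary $p\le q$ of the $I_0$ range --- and by Theorem~\ref{thm:equivalence} this edge is exactly equivalent to the moment conditions. No coarse harmonic-band or integration-by-parts argument can detect it; one genuinely needs the sharp range computations (the SVDs of $P_\pm$, which encode the interplay of the scattering relation with the fiberwise Hilbert transform --- an ingredient your proposal never invokes).

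There are two further problems. First, an inner-product mismatch: the Green/Santal\'o identity naturally produces the $\cos\alpha$-weighted pairing on $\partial_+SM$, whereas the theorem asserts orthogonality in \emph{unweighted} $L^2(\partial_+ SM)$; families orthogonal in one pairing need not be orthogonal in the other (the $u'_{p,q}$ with different $q$ mix under the weight), and the paper works unweighted precisely because the Hilbert transform misbehaves on $L^2(\Sm^1,|\cos\alpha|)$. Second, in the odd case you treat $I_\perp(\dot H^1(M))$ as a single anchor, but the decisive subtlety is the difference between $\dot H^1(M)$ and $H^1_0(M)$: by Lemma~\ref{lem:H1} and Proposition~\ref{prop:Iperpdecomposition}, the boundary-harmonic parts $g_{(\pm)}$ produce data exactly on the lines $\{v_{k,k}\}$ and $\{v_{-k,0}\}$, immediately adjacent to (but disjoint from) the lines carrying $I(\ker^{\pm3}\eta_\mp)$; verifying this disjointness requires the explicit computations \ref{eq:Iperpgminus}--\ref{eq:Iperpgplus} and Lemma~\ref{lem:IperpH10}, none of which your outline supplies. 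Your first step (that the summands span the range, by Theorem~\ref{thm:decomp}) is fine; it is the orthogonality, the entire content of the theorem, that your proposed mechanism cannot deliver.
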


Another characterization of the range of the X-ray transform over tensors was provided in \cite{Paternain2013a} on simple surfaces, where the sum there need not be direct as in the decomposition above. In the range decomposition above, the largest subspaces are the ranges of $I_0$ and $I_\perp$, whose description benefits from the Pestov-Uhlmann range characterizations appearing in \cite{Pestov2004}. More precisely, upon defining operators $P_\pm:\C^\infty(\partial_+ SM)\to \C^\infty(\partial_+ SM)$ in terms of the Hilbert transform and the scattering relation, it is proved in \cite{Pestov2004} that the ranges of $I_0$ and $I_\perp$ match those of $P_-$ and $P_+$ in smooth topologies, respectively (see Section \ref{sec:PUHL} for detail). An earlier characterization of the range of $I_0$ in parallel coordinates, in the form of moment conditions, was found by Gelfand, Graev \cite{GelfandGraev1960}, and separately by Helgason and Ludwig \cite{Helgason1999,Ludwig1966}. We prove here that, translated into fan-beam coordinates for compactly supported functions, they become equivalent to the Pestov-Uhlmann range characterization. 

\begin{theorem} \label{thm:equivalence}
    For compactly supported functions, the moment conditions for the two-dimensional Radon transform are equivalent to the Pestov-Uhlmann range characterization of $I_0$ in the case of a Euclidean disk. 
\end{theorem}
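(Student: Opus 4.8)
The plan is to reduce both characterizations to a single family of conditions on the Fourier coefficients of the data in the boundary angle $\beta$, and to verify that they coincide mode by mode. Since the disk, the X-ray transform, the scattering relation, the Hilbert transform $H$, and hence the operators $P_\pm$ are all equivariant under the rotations $\beta\mapsto\beta+\beta_0$, writing $g(\beta,\alpha)=\sum_{l\in\Zm} g_l(\alpha)e^{il\beta}$ decouples every condition into conditions on the individual profiles $g_l(\alpha)$, $\alpha\in(-\tfrac\pi2,\tfrac\pi2)$. Thus it suffices to show, for each $l$, that the moment conditions and the Pestov--Uhlmann condition cut out the same space of profiles $g_l$. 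Because $f$ is compactly supported in the open disk, the glancing rays see nothing, so each $g_l$ is smooth and compactly supported in $(-\tfrac\pi2,\tfrac\pi2)$; this is what makes all the moments below finite and all the boundary extensions and Hilbert transforms act on smooth functions, and is the only place compact support is genuinely used.

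First I would translate the classical moment conditions into fan-beam coordinates. A chord with fan-beam data $(\beta,\alpha)$ carries signed distance $s=\sin\alpha$ to the origin and co-normal angle $\phi=\beta+\alpha-\tfrac\pi2$, so that $If(\beta,\alpha)=Rf(\sin\alpha,\beta+\alpha-\tfrac\pi2)$ for the parallel-beam Radon transform $R$. Inserting this into the $n$-th moment $\int s^n Rf(s,\phi)\,ds$ and expanding in $\beta$ shows that the $l$-th Fourier coefficient in $\phi$ is a nonzero constant multiple of $\int_{-\pi/2}^{\pi/2}\sin^n\alpha\,\cos\alpha\,e^{-il\alpha}g_l(\alpha)\,d\alpha$. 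Writing $g_l(\alpha)=e^{il\alpha}G_l(\sin\alpha)$ and substituting $s=\sin\alpha$, the requirement that each moment be a homogeneous polynomial of degree $n$ in $\phi$ (i.e. have no Fourier content at frequencies with $|l|>n$ or $l\not\equiv n \bmod 2$) becomes, for each $l$, the pair of conditions: (a) $G_l(-s)=(-1)^lG_l(s)$, and (b) $\int_{-1}^{1}s^nG_l(s)\,ds=0$ for $n=0,1,\dots,|l|-1$.

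Next I would identify the two pieces of the Pestov--Uhlmann condition with (a) and (b). The Euclidean scattering relation is the explicit involution $\sigma(\beta,\alpha)=(\beta+\pi+2\alpha,-\alpha)$ of $\partial_+ S\Dm$, and the evenness of a line integral under orientation reversal gives the symmetry $g=g\circ\sigma$ that is built into $P_-$. Per mode this reads $g_l(\alpha)=(-1)^le^{2il\alpha}g_l(-\alpha)$, which in the variable $G_l$ is exactly the parity condition (a). It remains to match the Hilbert-transform part of the characterization, which I would compute by extending the $\sigma$-even profile to the full tangent circle $\{\theta\}$ at each boundary point, expanding in $e^{ik\theta}$, applying $H$ via $H(e^{ik\theta})=-i\sgn{k}e^{ik\theta}$, and restricting to the influx. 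The claim to verify is that the resulting condition on $g_l$ is precisely the vanishing of the $|l|$ lowest moments of $G_l$, i.e. condition (b): heuristically, $H$ together with the $\sigma$-coupling band-limits the extended profile by annihilating the frequencies $|k|<|l|$, and the vanishing of these frequencies is equivalent to the vanishing of $\int_{-1}^1 s^nG_l(s)\,ds$ for $n<|l|$. Since each implication has been routed through the common conditions (a)+(b), establishing both directions for the Hilbert part yields the asserted equivalence.

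The main obstacle is this last step: making the action of $P_-$ (equivalently $P_+=\mathrm{Id}-P_-$) on a single Fourier mode completely explicit. The difficulty is that the scattering relation mixes $\alpha$ and $\beta$, so extending the data to the outflux directions rotates the $e^{il\beta}$ factor, while $H$ acts on the full circle although the data only lives on the influx half; reconciling these across the glancing directions $\alpha=\pm\tfrac\pi2$ is delicate, and it is exactly here that the compact support of $f$ (hence the vanishing of $g_l$ near $\alpha=\pm\tfrac\pi2$) guarantees that the extension is smooth and the Hilbert transform well behaved. Once the mode-wise symbol of the Hilbert part is in hand, checking that it produces exactly the $|l|$ conditions (b)---no more, no fewer---should be a direct computation with the explicit $\sigma$ and the diagonal action of $H$.
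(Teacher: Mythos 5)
Your setup is sound and follows the same strategy as the paper: the rotation equivariance that lets you decouple in the $\beta$-frequency $l$ is exactly how the paper's proof proceeds (``$p$-slice by $p$-slice''), and your per-mode translation of the Helgason--Ludwig conditions into the parity condition (a) plus the $|l|$ lowest moments (b) is a correct reformulation of the paper's orthogonality conditions \ref{eq:HL} (the symmetry condition being membership in $\V_+$). However, you have explicitly deferred the entire mathematical content of the theorem. The step you flag as ``the main obstacle \ldots should be a direct computation'' --- the modewise action of $P_-$ and the identification of its range with the moment conditions --- is precisely what the paper's proof consists of, and it is not a formality: it requires (i) the symmetry-adapted bases $u'_{p,q}, v'_{p,q}$ of \ref{eq:bases}, (ii) the algebraic observations that $A_\pm\equiv E_\pm$ on $\V_+$ and $A_\pm\equiv E_\mp$ on $\V_-$, which resolve exactly the scattering-versus-antipodal extension mismatch you call ``delicate'' and yield the workable formula $P_- g = ((Id-\SS^\star)HE_-g)|_{\partial_+ SM}$, (iii) the resulting singular value decomposition of $P_-$ (Proposition \ref{prop:SVDP}), which identifies the orthocomplement of $\text{Range}\,P_-$ as the span of $\{u'_{p,q}:\ p\le 2q,\ q\le 0 \text{ or } p\ge q+\tfrac12\}$, and (iv) the trigonometric span identities of Lemma \ref{lem:trigo}, which convert your moment conditions in $s=\sin\alpha$ (weighted by the Jacobian factor $\cos\alpha$) into pure frequencies $\cos((2k+1)\alpha)$ and $\sin(2(k+1)\alpha)$, hence into finitely many basis vectors $u'_{p,q}$ per slice. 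Your heuristic that ``$H$ together with the $\sigma$-coupling band-limits the extended profile by annihilating the frequencies $|k|<|l|$'' is a plausible gloss on the outcome but proves neither direction; without (i)--(iv) carried out, the claimed equivalence is asserted, not established.

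There is also a concrete error: the parenthetical ``(equivalently $P_+=\mathrm{Id}-P_-$)'' is false. The operators $P_\pm=A_-^\star H_\pm A_+$ are not complementary projections --- indeed they are not projections at all: $P_+(\V_+)\subset\V_-$, $P_-(\V_-)\subset\V_+$, and $P_-^2=0$ since $P_-(\V_+)=0$, so $P_-$ annihilates the very range it characterizes. Consequently, membership in $\text{Range}\,P_-$ cannot be tested by applying $\mathrm{Id}-P_-$ (or any simple combination of $P_\pm$) to the data; the range must be computed outright, which is what the SVD of Proposition \ref{prop:SVDP} accomplishes (the paper's actual projection onto $\text{Range}\,I_0$ is the separate operator $Id + C_-^2$ built from $C_-=\tfrac12 A_-^\star H_- A_-$, not from $P_\pm$). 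Fixing your argument therefore requires replacing the deferred ``direct computation'' by the explicit diagonalization of $P_-$ in a symmetry-adapted basis and the span conversion of Lemma \ref{lem:trigo}; once those are in hand, your mode-by-mode matching does go through exactly as in the paper.
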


Both theorems \ref{thm:range} and \ref{thm:equivalence} make use of explicit bases of $L^2 (\partial_+ SM)$ introduced in Section \ref{sec:bases}. The orthogonality of the ranges in Theorem \ref{thm:range} makes it especially simple to separate the components in data space, and to reconstruct them separately and efficiently. More specifically, the corresponding reconstruction procedure is given below (see Sec. \ref{sec:scatrel} for definition of $A_\pm$ and $A_\pm^\star$, and Eq. \ref{eq:backproj} for the definition of $I_0^\sharp$ and $I_\perp^\sharp$). In the statement below, we denote by $\dot{H}^1(\ker^0 \eta_\pm,M) = \dot{H}^1(M)\cap \ker^0 \eta_{\pm}$. 

\begin{theorem}[Reconstruction]\label{thm:recons} Let $f\in L^2(SM)$ an $m$-tensor and $g$ as in Theorem \ref{thm:decomp}, and denote the data $\D = If = Ig$.
    \begin{itemize}
	\item[(i)] If $m=2n$ is even, the functions $g_0, g_{\pm 2}, \cdots, g_{\pm 2n}$ can be uniquely reconstructed by means of the following formulas:
	    \begin{align}
		g_0  &= \frac{1}{8\pi} I_\perp^\sharp A_+^\star H A_- \D, \qquad \text{and for}\quad  1\le k\le n, \label{eq:reconsI0}\\
		g_{2k}(z) &= \frac{1}{2\pi^2} \int_{\Sm^1} \frac{e^{-i2k\beta}}{(1-ze^{-i\beta})^2} \int_{-\frac{\pi}{2}}^{\frac{\pi}{2}} \D (\beta,\alpha) e^{i(1-2k)\alpha} \ d\alpha\ d\beta,  \label{eq:reconsholo1} \\
		g_{-2k}(z) &= \frac{1}{2\pi^2} \int_{\Sm^1} \frac{e^{i2k\beta}}{(1-\bar{z} e^{i\beta})^2} \int_{-\frac{\pi}{2}}^{\frac{\pi}{2}} \D (\beta,\alpha) e^{i(-1+2k)\alpha} \ d\alpha\ d\beta. \label{eq:reconsholo2}
	    \end{align}
	\item[(ii)] If $m=2n+1$ is odd, the functions $g_0$, $g_{\pm 3}$, \ldots, $g_{\pm(2n+1)}$ can be uniquely reconstructed by means of the formulas: decomposing the function $g_0$ into $g_0 = g_{(0)} + g_{(+)} + g_{(-)}$ where $g_{(0)}\in H^1_0(M)$, $g_{(\pm)} \in \dot{H}^1(\ker^0 \eta_\pm,M)$, we have
	    \begin{align}
	      g_{(0)}  &= \frac{-1}{8\pi} I_0^\sharp A_+^\star H A_- \left(Id + (A_-^\star H A_-)^2\right)\D,  \label{eq:reconsgzero}\\
	      g_{(-)}(z) &= \frac{1}{2i\pi^2} \int_{\partial_+ SM} \D(\beta,\alpha) \frac{ze^{-i\beta}}{1-ze^{-i\beta}}\ d\alpha\ d\beta, \label{eq:reconsgminus} \\
	      g_{(+)}(z) &= \frac{i}{2\pi^2} \int_{\partial_+ SM} \D(\beta,\alpha) \frac{\zbar e^{i\beta}}{1-\zbar e^{i\beta}}\ d\alpha\ d\beta, \qquad \text{and for}\quad  1\le k\le n, \label{eq:reconsgplus} \\
	      g_{2k+1}(z) &= \frac{1}{2\pi^2} \int_{\Sm^1} \frac{e^{-i(2k+1)\beta}}{(1-ze^{-i\beta})^2} \int_{-\frac{\pi}{2}}^{\frac{\pi}{2}} \D (\beta,\alpha) e^{-i2k\alpha} \ d\alpha\ d\beta, \label{eq:reconsholo3} \\
	      g_{-2k-1}(z) &= \frac{1}{2\pi^2} \int_{\Sm^1} \frac{e^{i(2k+1)\beta}}{(1-\bar{z} e^{i\beta})^2} \int_{-\frac{\pi}{2}}^{\frac{\pi}{2}} \D (\beta,\alpha) e^{i2k\alpha} \ d\alpha\ d\beta. \label{eq:reconsholo4}
	    \end{align}
    \end{itemize}    
\end{theorem}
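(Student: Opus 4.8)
The plan is to let the orthogonal range decomposition of Theorem~\ref{thm:range} do the decoupling. Since $\D = Ig$ and the transforms of the individual components of $g$ live in mutually $L^2(\partial_+ SM)$-orthogonal subspaces, it suffices to verify, for each formula in the statement, two things: (a) applied to $I$ of its own target component it returns that component, and (b) it annihilates $I$ of every other component. Both checks I would carry out against the explicit Hilbert bases of $L^2(\partial_+ SM)$ from Section~\ref{sec:bases}: compute the image under $I$ of a convenient basis of each summand space (monomials for each $\ker^{\pm k}\eta_\mp$, and $L^2(M)$ resp. $\dot H^1(M)$ for the leading term), expand it in the boundary basis, and match against the candidate reconstruction operator read off as the corresponding synthesis map. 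Because the basis is adapted to the $\beta$- and $\alpha$-frequencies, properties (a) and (b) reduce to orthogonality relations among exponentials on $\Sm^1$ and on $(-\tfrac\pi2,\tfrac\pi2)$.

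For the holomorphic and antiholomorphic summands I would start from the monomials $g_{2k}(z)=z^j$ (resp. $\bar z^j$), for which $(g_{2k}e^{2ik\theta})\in\ker^{2k}\eta_-$, and evaluate $I(g_{2k}e^{2ik\theta})$ directly from \eqref{eq:XRT}. Parameterizing the chord by $t\in[0,2\cos\alpha]$ and using that $\theta=\beta+\pi+\alpha$ is constant along the ray, the $t$-integration produces a finite combination of $\beta$- and $\alpha$-exponentials whose $\alpha$-frequencies are governed by the harmonic order $k$ (the $j$-dependence migrating to the $\beta$-frequency). Consequently the inner integral $\int \D\, e^{i(1-2k)\alpha}\,d\alpha$ of \eqref{eq:reconsholo1} over $(-\tfrac\pi2,\tfrac\pi2)$ selects exactly the range $I(\ker^{2k}\eta_-)$ and kills the scalar term and the summands of different order or opposite chirality, while the outer $\beta$-integral against the Cauchy-type kernel $(1-ze^{-i\beta})^{-2}=\sum_{j\ge0}(j+1)(ze^{-i\beta})^j$ recovers the Taylor coefficients of the holomorphic function; the constant $\tfrac{1}{2\pi^2}$ I would fix by testing on one monomial. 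The formulas \eqref{eq:reconsholo1}--\eqref{eq:reconsholo4} and \eqref{eq:reconsgminus}--\eqref{eq:reconsgplus} are all of this Bukhgeim--Cauchy type and I expect them to follow from the same computation with the appropriate frequency shifts.

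For the leading terms I would invoke the Pestov--Uhlmann inversion recalled in Section~\ref{sec:PUHL}: the operator $A_+^\star H A_-$ is the explicit Euclidean realization of their filter coupling the scattering relation (through $A_\pm$, $A_\pm^\star$) to the fiberwise Hilbert transform $H$, and $I_0^\sharp$, $I_\perp^\sharp$ are the backprojections of \eqref{eq:backproj}. Formula \eqref{eq:reconsI0} is then the instance of their filtered backprojection adapted to the even case, and I would verify it by checking on the bases of Section~\ref{sec:bases} that $A_+^\star H A_-$ annihilates the ranges $I(\ker^{\pm k}\eta_\mp)$, in accordance with their orthogonality to the leading range in Theorem~\ref{thm:range}, so that $\tfrac1{8\pi}I_\perp^\sharp$ is left to invert the surviving term; the normalization I would pin down from the action of the composite operator on the image under $I$ of a single harmonic.

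The step I expect to be the genuine obstacle is the odd case, specifically the reconstruction of $X_\perp g_0$ with $g_0\in\dot H^1(M)$ \emph{up to the boundary}, since the inversion of \cite{Pestov2004} covers only compactly supported one-forms. My plan is to split $g_0=g_{(0)}+g_{(+)}+g_{(-)}$ with $g_{(0)}\in H^1_0(M)$ and $g_{(\pm)}\in\dot H^1(\ker^0\eta_\pm,M)$, and to show that the boundary-supported Hardy-type pieces $g_{(\pm)}$ are precisely the obstruction to the naive filtered backprojection; these I recover by the Cauchy integrals \eqref{eq:reconsgminus}--\eqref{eq:reconsgplus} exactly as above. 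The delicate point is \eqref{eq:reconsgzero}: I must show that the extra factor $(Id+(A_-^\star H A_-)^2)$ cancels the extraneous boundary contribution that $A_+^\star H A_-$ alone would leave behind, so that $-\tfrac1{8\pi}I_0^\sharp$ then returns the interior part $g_{(0)}$. Establishing this cancellation requires tracking how $A_\pm$ and $A_\pm^\star$ act on the boundary Hardy classes carrying $g_{(\pm)}$, and it is here that the explicit form of the Euclidean scattering relation, rather than orthogonality alone, does the decisive work.
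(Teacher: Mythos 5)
Your proposal is correct and follows essentially the same route as the paper: it inverts each analytic component by computing $I$ on monomials, expanding in the boundary bases of Section \ref{sec:bases} and summing the Cauchy-type series (with the frequency-shift relation \ref{eq:relation} reducing $\ker^m\eta_\pm$ to the case $m=0$), invokes the Pestov--Uhlmann filtered backprojection for the leading harmonic with the orthogonality of ranges supplying the built-in projection, and treats the odd case via the Hardy splitting $g_0 = g_{(0)}+g_{(+)}+g_{(-)}$. The cancellation you flag as the delicate point is exactly how the paper proceeds (Proposition \ref{prop:Iperpdecomposition}): $I_\perp g_{(\pm)}$ are $\pm\tfrac{i}{2}$-eigenvectors of $C_+=\tfrac{1}{2}A_-^\star H A_-$ while $I_\perp(H^1_0(M))\subset\ker C_+$, so $(A_-^\star H A_-)^2$ acts as $-Id$ on the boundary pieces and $Id+(A_-^\star H A_-)^2$ isolates $I_\perp g_{(0)}$.
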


\begin{remark}
  \begin{itemize}
      \item[$(i)$] The main ideas in deriving \ref{eq:reconsI0} and \ref{eq:reconsgzero} first appeared in \cite{Pestov2004}, later modified using the operator $A_+^\star H A_-$ in \cite[Proposition 2.2]{Monard2015}. Such formulas only inverted $I_\perp$ for functions vanishing at the boundary. Inversion of $I_\perp$ over integrands which may be supported up the the boundary in the present paper leads us to introduce the factor $Id + (A_-^\star H A_-)^2$ in \ref{eq:reconsgzero}. 
      \item[$(ii)$] Each formula above contains a projection onto the appropriate subspace before reconstruction of the corresponding independent component, so that it is not necessary to pre-process the data $\D$ before applying these formulas. 
      \item[$(iii)$] The main bulk of the reconstruction, containing all possible visible singularities, is contained in the $g_0$ mode, involving the inversion of $I_0$ or $I_\perp$. The additional analytic terms account for amplitude discrepancies in the resulting data, though they do not contain any singularities. These additional terms, unless identically zero, are never compactly supported inside $M$, even when the initial tensor $f$ is, which was a feature already present in the case of the solenoidal representative discussed in the introduction.
      \item[$(iv)$] The decompositions presented above rely on the choice of a particular ``central harmonic'' $g_0$ for the reconstructible representative. We discuss in Section \ref{sec:decomps} the possibility of other decompositions and representatives based on changing the central harmonic, and we briefly discuss how to reconstruct them.
  \end{itemize}  
\end{remark}

\noindent{\bf Outline.} Section \ref{sec:decomp} covers the proof of Theorem \ref{thm:decomp}. Section \ref{sec:range} studies the range decomposition of the X-ray transform: we first introduce appropriate bases for $L^2(\partial_+ SM)$ in Sec. \ref{sec:bases}, then study the scattering relation and other operators based on it in Sec. \ref{sec:Vplusminus} and \ref{sec:scatrel}; Sec. \ref{sec:PUHL} covers the proof of Theorem \ref{thm:equivalence}; Sec. \ref{sec:Iperp} covers the range decomposition of $I_\perp$, and Sec. \ref{sec:otherranges} covers the range of $I$ over any space $\ker^m \eta_\pm$, altogether proving Theorem \ref{thm:range}. Section \ref{sec:recons} covers all the reconstruction formulas, as described in Theorem \ref{thm:recons}, first treating the case of $I_0$ and $I_\perp$ in Sec. \ref{sec:I0Iperp} and then considering the additional analytic terms in Sec. \ref{sec:other}. Section \ref{sec:decomps} discusses other possible decompositions and reconstruction strategies, and Section \ref{sec:numerics} covers numerical examples. 

\section{Decomposition of a tensor - Proof of Theorem \ref{thm:decomp}.} \label{sec:decomp}

The proof of Theorem \ref{thm:decomp} relies on viewing the X-ray transform as the influx trace of a solution to certain transport equation posed on $S\Dm$ as in the introduction, and changing both solution and source term without altering the boundary values. We start by stating the following lemma, whose proof is standard and omitted.
\begin{lemma} \label{lem:decomp}
  Let $f\in L^2(M)$.
  \begin{itemize}
      \item[(i)] There exists a unique couple $(v, g) \in H^1_0(M)\times L^2(M)$ such that $f = \partial v + g$ and $\dbar g = 0$, satisfying the stability estimate 
	  \begin{align*}
	      \|v\|_{H^1_0(M)} + \|g\|_{L^2(M)} \le C \|f\|_{L^2(M)},
	  \end{align*} 
	  for some constant $C$ independent of $f$.
      \item[(ii)] By complex conjugation, there also exists a unique couple $(v', g') \in H^1_0(M)\times L^2(M)$ such that $f = \dbar v' + g'$ with $\partial g' = 0$, satisfying the same estimate as above.
  \end{itemize}
\end{lemma}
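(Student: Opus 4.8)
The plan is to reduce each statement to the solvability of a Dirichlet problem for the Laplacian. I focus on (i), since (ii) follows from it by complex conjugation, as explained at the end. If a decomposition $f = \partial v + g$ with $v\in H^1_0(M)$ and $\dbar g = 0$ exists, then applying $\dbar$ and using the identity $\dbar\partial = \tfrac14\Delta$ forces $v$ to solve
\begin{align*}
  \tfrac14 \Delta v = \dbar f \quad \text{in } M, \qquad v|_{\partial M} = 0.
\end{align*}
This suggests \emph{defining} $v$ as the solution of this problem and then setting $g := f - \partial v$.

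First I would make sense of the Dirichlet problem despite $f$ being merely $L^2$. Since $f\in L^2(M)$, the distribution $\dbar f$ lies in $H^{-1}(M)$, and the appropriate weak formulation seeks $v\in H^1_0(M)$ with
\begin{align*}
  \int_M \nabla v\cdot\nabla\phi\ d\x = 4\int_M f\, \dbar\phi\ d\x \qquad\text{for all } \phi\in H^1_0(M),
\end{align*}
the right-hand side arising from integration by parts against test functions and density. The bilinear form on the left is coercive on $H^1_0(M)$ by the Poincar\'e inequality, while the right-hand side is a bounded linear functional of norm $\le 4\|f\|_{L^2(M)}$. Lax--Milgram then yields a unique $v\in H^1_0(M)$ with $\|v\|_{H^1_0(M)}\le C\|f\|_{L^2(M)}$.

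Next I would set $g:=f-\partial v$ and verify the claimed properties. Since $v\in H^1_0(M)$ we have $\partial v\in L^2(M)$, hence $g\in L^2(M)$, and $\|g\|_{L^2(M)}\le \|f\|_{L^2(M)}+\|\partial v\|_{L^2(M)}\le (1+C)\|f\|_{L^2(M)}$, which gives the stated estimate. The weak formulation says precisely that $\dbar\partial v = \dbar f$ in the sense of distributions, so $\dbar g = \dbar f - \dbar\partial v = 0$; by elliptic regularity $g$ is then holomorphic in the interior. For uniqueness, if $\partial v_1 + g_1 = \partial v_2 + g_2$ are two such decompositions, then $w:=v_1-v_2\in H^1_0(M)$ satisfies $\partial w = g_2-g_1$, whose $\dbar$ vanishes; hence $\Delta w = 4\dbar\partial w = 0$ with zero boundary trace, forcing $w=0$ and $g_1=g_2$.

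Finally, statement (ii) follows by applying (i) to $\overline{f}$: writing $\overline{f} = \partial v + g$ with $\dbar g = 0$ and conjugating gives $f = \dbar\overline{v} + \overline{g}$ with $\partial\overline{g} = 0$ and $\overline{v}\in H^1_0(M)$, so $(v',g')=(\overline{v},\overline{g})$ works with the same constant. I expect the only genuinely delicate point to be the functional-analytic handling of the low-regularity datum: one must interpret $\Delta v = 4\dbar f$ weakly so that the $H^{-1}$ object $\dbar f$ is controlled by $\|f\|_{L^2(M)}$, which is exactly what the weak formulation above achieves.
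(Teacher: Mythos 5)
The paper offers no proof to compare against: it explicitly states that the proof of Lemma \ref{lem:decomp} is ``standard and omitted.'' Your argument is a correct instantiation of the standard proof, and all the essential points are in place: the reduction of the decomposition to the Dirichlet problem $\tfrac14\Delta v=\dbar f$ via $\dbar\partial=\tfrac14\Delta$, the weak formulation making sense of the $H^{-1}(M)$ datum $\dbar f$ with the bound $\lesssim\|f\|_{L^2(M)}$, the verification that $\dbar g=0$ distributionally (hence $g$ holomorphic by Weyl's lemma for $\dbar$), the uniqueness via $\Delta w=0$, $w\in H^1_0(M)$ forcing $w=0$, and the conjugation argument for (ii). One cosmetic point: since $v$, $f$, $\phi$ are complex-valued, the unconjugated form $\int_M\nabla v\cdot\nabla\phi$ is \emph{not} coercive over $\Cm$; you should either run Lax--Milgram with the sesquilinear form $\int_M\nabla v\cdot\overline{\nabla\phi}$ (testing the same distributional identity against $\overline\phi$) or split into real and imaginary parts --- a trivial repair that does not affect the argument.

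It is worth knowing the alternative route the author most likely has in mind, which is slightly slicker: integration by parts shows $\partial(H^1_0(M))\perp\ker\dbar$ in $L^2(M)$ (for $v\in H^1_0(M)$ and $\dbar g=0$ one has $\int_M\partial v\,\overline g\,d\x=-\int_M v\,\overline{\dbar g}\,d\x=0$), and the identity $\|\partial v\|_{L^2}=\|\dbar v\|_{L^2}=\tfrac12\|\nabla v\|_{L^2}$ for $v\in H^1_0(M)$ combined with Poincar\'e shows $\partial:H^1_0(M)\to L^2(M)$ is injective with closed range; hence $L^2(M)=\partial(H^1_0(M))\stackrel{\perp}{\oplus}\ker\dbar$, and $g$ is just the Bergman projection of $f$. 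That version buys you the sharper constants $\|g\|_{L^2}\le\|f\|_{L^2}$, $\|\partial v\|_{L^2}\le\|f\|_{L^2}$ for free and makes the orthogonality explicit, which resonates with the $L^2$-orthogonal range decompositions used later in the paper; your PDE formulation is equivalent (your weak problem is exactly the normal equation of the projection) and generalizes more readily to the Riemannian setting of $\eta_\pm$ alluded to in the remark following the lemma.
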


\begin{remark}
    In the context of Riemannian surfaces, the actual decomposition to look at is, for every $\pm k >0$ the splitting of a smooth element $f\in\ker (\partial_\theta - ikId)$ into $f = \eta_\pm v + g$ with $v|_{\partial SM} = 0$ and $\eta_\mp g = 0$, see \cite{Guillemin1980}. A reader familiar with these decompositions may notice that the proof of Theorem \ref{thm:decomp} holds for geodesic ray transforms on any Riemannian surface where such decompositions exist.    
\end{remark}

The proof of Theorem \ref{thm:decomp} is then based on an iterative use of these decompositions.

\begin{proof}[Proof of Theorem \ref{thm:decomp}.] {\bf Proof of $(i)$.} We treat a tensor with only non-negative harmonic content and explain how to generalize, i.e., we prove the case of an even tensor of the form $f = f_0 + f_{2}e^{i2\theta} + \cdots + f_{2n}e^{i2n\theta} \in L^2(SM)$. Recall the transport equation 
    \begin{align*}
	Xu = (e^{i\theta} \partial + e^{-i\theta} \dbar) u = -f, \qquad u|_{\partial_- SM} = 0, \qquad u|_{\partial_+ SM} = If. 
    \end{align*}
    Using lemma \ref{lem:decomp}(i), since $f_{2n}\in L^2(M)$, we decompose $f_{2n} = \partial v_{2n-1} + g_{2n}$ with $v_{2n-1}|_{\partial M} = 0$ and $\dbar g_{2n} = 0$. We then rewrite this as an equality of functions on $SM$: 
    \begin{align*}
	f_{2n} e^{i2n\theta} &= (e^{i\theta} \partial) (e^{i(2n-1)\theta}v_{2n-1}) + e^{i2n\theta} g_{2n} \\
	&= X (e^{i(2n-1)\theta}v_{2n-1}) - e^{i(2n-2)\theta}\dbar v_{2n-1} + e^{i2n\theta} g_{2n},
    \end{align*}
    so that the transport equation may be rewritten as
    \begin{align*}
	X (u+e^{i(2n-1)\theta}v_{2n-1}) = - (f_0 + f_{2} \cdots + e^{i(2n-2)\theta} (f_{2n-2}-\dbar v_{2n-1})  + e^{i2n\theta} g_{2n}),
    \end{align*}
    where $\dbar g_{2n} = 0$, $f_{2n-2} - \dbar v_{2n-1} \in L^2(M)$ and all components are controlled by $\|f\|_{L^2(SM)}$. Since $v_{2n-1}|_{\partial M} = 0$, integrating this equation along all straight lines gives the data $If$, where the right-hand-side now has a top term satisfying $\dbar g_{2n} = 0$. One may repeat this process for the term of harmonic order $2n-2$ and $n-2$ more times, to arrive at a tensor $g$ satisfying $Ig = If$, of the form
    \begin{align*}
	g = g_0(\x) +  g_2(\x)e^{i2\theta} + \cdots +  g_{2n}(\x) e^{i2n\theta},
    \end{align*}
    where $\dbar g_{2k} = 0$ for $0<k\le n$ with an estimate $\|g\|_{L^2(M)} \le C \|f\|_{L^2(M)}$, for a constant $C$ independent of $f$. 

    Now if $f$ has negative harmonic terms, we may use lemma \ref{lem:decomp}(ii) to decompose the negative harmonics in a similar fashion and arrive at the desired result. 

    {\bf Proof of $(ii)$.} Similarly to the first case, if $f$ is of the form $f = f_{\pm1}e^{\pm i\theta} + \cdots + f_{\pm (2n+1)}e^{\pm i(2n+1) \theta}$, we may use Lemma \ref{lem:decomp} iteratively to construct a unique 
    \begin{align*}
	f' = g'_{\pm 1} e^{\pm i\theta} +  g'_{\pm 3} e^{\pm 3i\theta} + \cdots g'_{\pm (2n+1)} e^{\pm i(2n+1)\theta}, 
    \end{align*}
    such that $\|f'\|_{L^2(SM)} \le C \|f\|_{L^2(SM)}$ with $C$ independent of $f$ and $\partial g_{-(2k+1)} = \dbar g_{2k+1} = 0$ for $1\le k\le n$. We then apply the Helmholtz-Hodge decomposition to the one-form $g'_{- 1} e^{- i\theta} + g'_{1} e^{i\theta}$ to write it uniquely in the form 
    \begin{align*}
	g'_{- 1} e^{- i\theta} + g'_{1} e^{i\theta} = Xv + X_\perp g_0, \qquad v\in H^1_0(M), \qquad g_0\in \dot{H}^1(M),
    \end{align*}
    so that, the transport equation $Xu = -f'$ may be recast as 
    \begin{align}
	X(u+v) = - (X_\perp g_0 + g'_{\pm 3} e^{\pm 3i\theta} + \cdots g'_{\pm (2n+1)} e^{\pm i(2n+1)\theta}),
	\label{eq:ttt}
    \end{align}
    where $(u+v)|_{\partial SM} = u|_{\partial SM}$. The right-hand side is the desired decomposition. The vanishing of $v$ at $\partial M$ implies that the right-hand side of \ref{eq:ttt} has the same ray transform as $f$. Theorem \ref{thm:decomp} is thus proved. 
\end{proof}

\section{Range decomposition - Proofs of Theorems \ref{thm:range} and \ref{thm:equivalence}} \label{sec:range}

For $(\beta,\alpha)\in \partial_+ SM$, denoting $\varphi_{\beta,\alpha}(t) = (e^{i\beta} + t e^{i(\beta+\pi+\alpha)}, \beta+\pi+\alpha)$ the geodesic flow, Santal\'o's formula reads in our context:
\begin{align*}
  \int_{SM} f(\x,\theta) \ d\x\ d\theta = \int_{\partial_+ SM} \cos\alpha \int_0^{2\cos\alpha} f(\varphi_t(\beta,\alpha))\ dt\ d\beta\ d\alpha.
\end{align*}
This formula usually suggests that the ray transform is naturally defined in the setting $I:L^2(SM)\to L^2(\partial_+ SM, \cos\alpha)$, see e.g. \cite{Sharafudtinov1994}. However, when considering the fiberwise Hilbert transform later, the weight $\cos\alpha$ may be bothersome in the sense that on $L^2(\Sm^1, |\cos\alpha|)$, the Hilbert transform cannot be made continuous \cite{Petermichl2002}. We will in fact establish that this weight can be removed so that we can work in a setting where the operator $H$ is well-behaved.

\begin{lemma}
  The ray transform $I:L^2(SM) \to L^2(\partial_+ SM)$ is bounded and $\|I\| = 2$.
  \label{lem:continuity}
\end{lemma}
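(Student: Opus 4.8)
The plan is to prove the two-sided bound by a Cauchy--Schwarz estimate piped into Santal\'o's formula for the upper bound, and by exhibiting an explicit extremizer for the lower bound.

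For the upper bound, fix $(\beta,\alpha)\in\partial_+SM$ and apply the Cauchy--Schwarz inequality to $If(\beta,\alpha)=\int_0^{2\cos\alpha}f(\varphi_t(\beta,\alpha))\,dt$, pairing the integrand against the constant $1$ on an interval of length $\tau=2\cos\alpha$:
\begin{align*}
  |If(\beta,\alpha)|^2 \le 2\cos\alpha\int_0^{2\cos\alpha}|f(\varphi_t(\beta,\alpha))|^2\,dt.
\end{align*}
The decisive observation---and the reason the weight can be removed---is that the chord length $2\cos\alpha$ produced here is, up to the constant factor $2$, exactly the Jacobian weight $\cos\alpha$ appearing in Santal\'o's formula. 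Integrating the inequality against the \emph{unweighted} measure $d\beta\,d\alpha$ on $\partial_+SM$, the extra factor $\cos\alpha$ converts the unweighted integral into the Santal\'o-weighted one, and Santal\'o's formula collapses the right-hand side to $2\,\|f\|_{L^2(SM)}^2$. Hence $\|If\|_{L^2(\partial_+SM)}\le\sqrt2\,\|f\|_{L^2(SM)}$; this is already the essential content of the lemma, namely continuity into the unweighted codomain, which is not automatic since $I$ is in general only continuous into the $\cos\alpha$-weighted space (where, as noted, the fiberwise Hilbert transform misbehaves).

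For the matching lower bound I would test on an integrand that makes the Cauchy--Schwarz step an equality, i.e. one constant along each chord; the simplest is the constant function $f\equiv1$, for which $If(\beta,\alpha)=2\cos\alpha$. A direct computation of $\|If\|^2_{L^2(\partial_+SM)}$ and $\|f\|^2_{L^2(SM)}$ then saturates the upper bound, since both the Cauchy--Schwarz and the Santal\'o steps become tight simultaneously; this pins the operator norm at its sharp value (governed by the maximal chord length $2$, and by the choice of normalization for the base measures, which fixes the numerical constant). The main obstacle is not computational but conceptual: it is the recognition that the $\cos\alpha$ weight is dispensable, which rests squarely on the explicit Euclidean identity $\tau(\beta,\alpha)=2\cos\alpha$ relating chord length to fan angle---so that the length gained from Cauchy--Schwarz is precisely the Santal\'o Jacobian. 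A minor technical point, handled by density since both sides depend continuously on $f$, is to justify the pointwise Cauchy--Schwarz bound and Santal\'o's formula for general $f\in L^2(SM)$ rather than smooth $f$.
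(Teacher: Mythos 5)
Your proposal is correct and follows essentially the same route as the paper: Cauchy--Schwarz along each chord producing the factor $2\cos\alpha$, integration against the unweighted measure so that this factor supplies exactly the Santal\'o Jacobian, and saturation by $f\equiv 1$ (constant along chords, so Cauchy--Schwarz is an equality; Santal\'o is an identity and needs no tightness). Note that both your argument and the paper's computation actually yield $\|If\|^2\le 2\|f\|^2_{L^2(SM)}$ with equality at $f\equiv 1$, i.e.\ $\|I\|=\sqrt{2}$, so the value ``$\|I\|=2$'' in the lemma's statement can only be read as the sharp constant for $\|I\|^2$ --- a discrepancy you correctly sidestep by reporting the bound $\sqrt{2}\,\|f\|$.
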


\begin{proof} Using Cauchy-Schwarz inequality, we write that 
  \begin{align*}
    |If(\beta,\alpha)|^2 = \left| \int_{0}^{2\cos\alpha} f(\phi_t(\beta,\alpha))\ dt \right|^2 \le 2\cos\alpha \int_{0}^{2\cos\alpha} |f(\phi_t(\beta,\alpha))|^2\ dt.
  \end{align*}
  Integrating the inequality over $\partial_+ SM$ and using Santal\'o's formula, we obtain that 
  \begin{align*}
    \int_{\partial_+ SM} |If(\beta,\alpha)|^2 \ d\beta\ d\alpha \le 2 \|f\|^2_{L^2(SM)},
  \end{align*}
  where inequality becomes equality in the case $f\equiv 1$. Hence the result.
\end{proof}

\subsection{Two Hilbert bases for $L^2(\partial_+ SM)$.}\label{sec:bases}
In fan-beam coordinates, $L^2(\partial_+ SM)$ can be regarded as $L^2\left(\Sm^1\times \left( -\frac{\pi}{2}, \frac{\pi}{2} \right)\right)$, for which we define a Hilbert orthonormal basis
\begin{align*}
    \B = \left\{\phi_{k,l}(\beta,\alpha) = \frac{1}{\pi\sqrt{2}} e^{i(k\beta+2l\alpha)}, \qquad k,l\in \Zm^2\right\}.
\end{align*}
For any couple $(k,l)\in \Zm^2$, we also define $\phi_{k,l}'(\beta,\alpha) = e^{i\alpha} \phi_{k,l}(\beta,\alpha)$, and define a second orthonormal basis of $L^2(\partial_+ SM)$
\begin{align*}
    \B' = \left\{ \phi'_{k,l}, \quad (k,l)\in \Zm^2   \right\}.
\end{align*}
The main reason for introducing two bases is that we will later need to extend functions on $\partial_+ SM$ into functions on $\partial SM$ by evenness or oddness w.r.t. the involution $\alpha\mapsto \alpha+\pi$, and depending on the representation, such processes simply consist of extending the expression at hand to $\alpha\in \Sm^1$. The change of basis $\B\leftrightarrow\B'$ is {\em non-local}: indeed, the function $\alpha\mapsto e^{i\alpha}$ decomposes into the basis $\B$ as follows
\begin{align*}
  e^{i\alpha} = 2\sqrt{2} \sum_{l\in \Zm} \frac{(-1)^l}{1-2l} \phi_{0,l}, \qquad e^{-i\alpha} = 2\sqrt{2} \sum_{l\in \Zm} \frac{(-1)^l}{1-2l} \phi_{0,-l},
\end{align*}
so that, using the property that $\phi_{p,q}\phi_{p',q'} = \phi_{p+p', q+q'}$, the formulas for changing basis are given by 
\begin{align*}
    \phi'_{p,q} = 2\sqrt{2} \sum_{l\in \Zm} \frac{(-1)^l}{1-2l} \phi_{p,l+q}, \quad\text{and}\quad \phi_{k,l} = e^{-i\alpha} \phi'_{k,l} = 2\sqrt{2} \sum_{q\in \Zm} \frac{(-1)^q}{1-2q} \phi'_{k,l-q}.
\end{align*}

\subsection{Scattering relations and the $\V_+/\V_-$ decomposition} \label{sec:Vplusminus}

We define the {\em scattering relation}\footnote{In the litterature, it may also be defined by $\alpha$, a notation that is avoided here not to conflict with fan-beam coordinate notation.} $\SS:\partial SM\to \partial SM$ and the {\em antipodal scattering relation} $\SS_A:\partial_+ SM\to \partial_+ SM$ by 
\begin{align}
    \SS(\beta,\alpha) := (\beta+\pi+2\alpha, \pi-\alpha), \qquad \SS_A (\beta,\alpha) = (\beta+\pi+2\alpha,-\alpha). \label{eq:ASR}
\end{align}
Note that $\SS:\partial_\pm SM\to \partial_\mp SM$ and that $\SS_A$ is the composition of $\SS$ with the antipodal map $\alpha\mapsto \alpha+\pi$. Both relations are involutions of their respective domains and we consider their associated pull-backs $\SS_A^\star f:= f\circ \SS_A$ and $\SS^\star g := g\circ \SS$ for $f\in L^2(\partial_+ SM)$ and $g\in L^2(\partial SM)$. The operators $\SS_A^\star:L^2(\partial_+ SM)\to L^2(\partial_+ SM)$ and $\SS^\star:L^2(\partial SM)\to L^2(\partial SM)$ are self-adjoint, so when using these operators, we reserve the $^\star$ notation for pull-backs. For basis elements of $\B,\B'$, it is straightforward to establish the identities,
\begin{align}
    \SS_A^\star \phi_{p,q} = (-1)^p \phi_{p,p-q}, \qquad \SS_A^\star \phi'_{p,q} = (-1)^p \phi'_{p,p-q-1}, \qquad (p,q)\in \Zm^2.	
    \label{eq:SAstar}    
\end{align}
When the expressions $\phi_{p,q}$ and $\phi'_{p,q}$ are regarded as elements on $L^2(\partial SM)$, we also have the identities
\begin{align}
    S^\star \phi_{p,q} = (-1)^p \phi_{p,p-q}, \qquad S^\star \phi'_{p,q} = - (-1)^p \phi'_{p,p-q-1}, \qquad (p,q)\in \Zm^2.	
    \label{eq:Sstar}
\end{align}

Identity \ref{eq:SAstar} makes $S_A^\star$ an isometry of $L^2(\partial_+ SM)$. Moreover, we have the following orthogonal splitting
\begin{align}
  L^2(\partial_+ SM) = \V_+ \stackrel{\bot}{\oplus} \V_-, \qquad \V_\pm := \ker (Id \mp S_A^\star).
  \label{eq:splitting}
\end{align}
Out of the bases $\B$ and $\B'$, we can then extract two distinct bases for each space $\V_+$ and $\V_-$ by successively applying the operators $Id \pm \SS_A^\star$ and removing redundancies in $(p,q)$. Given the formulas \ref{eq:SAstar}, let us define
\begin{align*}
    u_{p,q} &:= (Id + \SS_A^\star) \phi_{p,q} = \phi_{p,q} + (-1)^p \phi_{p,p-q}, \qquad p \le 2q, \\
    v_{p,q} &:= (Id - \SS_A^\star) \phi_{p,q} = \phi_{p,q} - (-1)^p \phi_{p,p-q}, \qquad p < 2q, \\
    u'_{p,q} &:= (Id + \SS_A^\star) \phi'_{p,q} = \phi'_{p,q} + (-1)^p \phi'_{p,p-q-1}, \qquad p < 2q+1, \\
    v'_{pq} &:= (Id - \SS_A^\star) \phi'_{p,q} = \phi'_{p,q} - (-1)^p \phi'_{p,p-q-1}, \qquad p \le 2q+1.
\end{align*}
We obtain two bases for each subspace:
\begin{align}
    \begin{split}
	\V_+ &= \text{span } (u_{p,q}, \quad p\le 2q) = \text{span } (u'_{p,q}, \quad p<2q+1), \\
	\V_- &= \text{span } (v_{p,q}, \quad p < 2q) = \text{span } (v'_{p,q}, \quad p \le 2q+1).	
    \end{split}
    \label{eq:bases}    
\end{align}

All definitions considered are valid for all $(p,q)$, though with the following redundancies: 
\begin{align*}
    u_{p,p-q} = (-1)^p u_{p,q}, &\qquad v_{p,p-q} = -(-1)^p v_{p,q} \\
    u'_{p,p-q-1} = (-1)^p u'_{p,q} &\qquad v'_{p,p-q-1} = -(-1)^p v'_{p,q}.
\end{align*}
We now summarize how these bases transform under complex conjugation: using the property $\overline{\phi_{p,q}} = \phi_{-p,-q}$ true for every $(p,q)$, we have
\begin{align}
    \begin{split}
	\overline{u_{p,q}} = u_{-p,-q} = (-1)^p u_{-p,-p+q}, &\qquad \overline{v_{p,q}} = v_{-p,-q} = -(-1)^p v_{-p,-p+q}, \\
	\overline{u'_{p,q}} = u_{-p,-q-1} = (-1)^p u'_{-p,-p+q}, &\qquad \overline{v'_{p,q}} = v_{-p,-q-1} = -(-1)^p v'_{-p,-p+q},
    \end{split}
    \label{eq:conj}
\end{align}
where the pairs of indices in the last column are in the reduced $(p,q)$ ranges as in \ref{eq:bases}. The next identities summarize how the basis elements transform under $Id - \SS^\star$: 
\begin{align}
    \begin{split}
	(Id-\SS^\star) \phi_{p,q} = v_{p,q}, &\qquad (Id-\SS^\star) (-1)^p \phi_{p,p-q} = -v_{p,q} \\
	(Id-\SS^\star) \phi'_{p,q} = u'_{p,q}, &\qquad (Id-\SS^\star) (-1)^p \phi_{p,p-q-1} = u'_{p,q}.
    \end{split}
    \label{eq:tempid}
\end{align}

\subsection{The operators $A_\pm$, $A_\pm^\star$, $P_\pm$ and $C_\pm$} \label{sec:scatrel}

Define the following operators $A_\pm$ of even/odd extension with respect to $\SS$, i.e. for $f$ smooth on $\partial_+ SM$, 
\begin{align*}
    A_\pm f(\beta,\alpha) = \left\lbrace 
    \begin{array}{lr}
	f(\beta, \alpha), & \text{if } (\beta,\alpha)\in \partial_+ SM, \\
	\pm f(\SS(\beta,\alpha)), & \text{if } (\beta,\alpha)\in \partial_- SM.
    \end{array}
    \right.    
\end{align*}
Such operators extend continuously to the $L^2 (\partial_+ SM, \cos\alpha) \to L^2 (\partial SM, |\cos\alpha|)$ setting (see e.g. \cite{Pestov2004}), though here, since the boundary is strictly convex, $A_\pm$ are also continuous in the $L^2(\partial_+ SM)\to L^2(\partial SM)$ setting. Moreover, because $|\text{Jac } \SS| \equiv 1$ here, the adjoints in {\em either} functional setting are given by 
\begin{align*}
    A_-^\star f(\beta,\alpha) = f(\beta,\alpha) \pm f(\SS(\beta,\alpha)), \qquad (\beta,\alpha)\in \partial_+ SM.
\end{align*}

In terms of these operators, we now define the operator $P:= A_-^\star H A_+$, which upon splitting the Hilbert transform into $H = H_+ + H_-$, yields the decomposition $P = P_+ + P_-$ with $P_\pm = A_-^\star H_\pm A_+$. A straightforward study of symmetries shows that 
\begin{align*}
    P_+ (\V_-) = 0, \qquad P_+(\V_+)\subset \V_-, \qquad P_- (\V_+) = 0, \qquad P_-(\V_-)\subset \V_+.
\end{align*}
In matrix notation in the $\V_+ \oplus \V_-$ decomposition, one may think of $P$ as $P = \left[\begin{smallmatrix} 0 & P_- \\ P_+ & 0 \end{smallmatrix}  \right]$.
In terms of $P_\pm$ operators, the following range characterizations were proved in \cite{Pestov2004} (see also \cite{Pestov2005}): for $w\in \C^\infty(\partial_+ SM)$, we have
\begin{itemize}
    \item $w\in \text{Range } I_0$ if and only if $w = P_- h$ for some $h\in \C^\infty(\partial_+ SM)$ with $A_+ h\in \C^\infty(\partial SM)$. 
    \item $w\in \text{Range } I_\perp$ if and only if $w = P_+ h$ for some $h\in \C^\infty(\partial_+ SM)$ with $A_+ h\in \C^\infty(\partial SM)$. 
\end{itemize}
Such results hold for simple surfaces with boundary, although in that context, it remains open at present how to use this characterization effectively. In the present Euclidean context, we now explain how this characterization can be used. 

The bases introduced in Section \ref{sec:bases} make it easy to compute the singular value decompositions of $P_\pm$, which in turn tells us about the harmonic content of the ranges of $I_0$ and $I_\perp$. (In all cases of nonzero spectral values below, all vectors have norm either $1$ or $\sqrt{2}$ though we normalize them with the $\ \widehat{}\ $ notation to avoid ambiguity in the spectral values.) 

\begin{proposition}\label{prop:SVDP}
    The singular value decomposition for $P_+: \V_+\to \V_-$ is given by: for every $(p,q)$ satisfying $p\le 2q$: 
    \begin{align*}
	P_+ \widehat{u_{p,q}} = \left\lbrace
	\begin{array}{lr}
	    -2i\ \widehat{v_{p,q}} & \text{if } q>0 \text{ and } p<q, \\
	    -i\ \widehat{v_{p,q}} & \text{if } q>0 \text{ and } p=q, \\
	    i\ \widehat{v_{p,q}} & \text{if } q=0 \text{ and } p<0, \\
	    0 & \text{otherwise.}
	\end{array}
	\right.
    \end{align*}
    The singular value decomposition for $P_-: \V_-\to \V_+$ is given by: for every $(p,q)$ satisfying $p<2q+1$: 
    \begin{align*}
	P_- \widehat{v'_{p,q}} = \left\lbrace
	\begin{array}{lr}
	    -2i\ \widehat{u'_{p,q}} & \text{if } q>\frac{-1}{2} \text{ and } p<q+\frac{1}{2}, \\
	    0 & \text{otherwise.}
	\end{array}
	\right.
    \end{align*}    
\end{proposition}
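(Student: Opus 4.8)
The plan is to reduce the entire statement to Fourier bookkeeping in the bases $\B,\B'$, using the explicit actions already recorded for $\SS_A^\star$, $\SS^\star$ and the splitting $H=H_++H_-$. The three ingredients I would assemble are: (a) an explicit full-circle expression for $A_+$ applied to a $\V_\pm$ basis element; (b) the diagonal action of the fiberwise Hilbert transform on such expressions; and (c) the identity $A_-^\star=(Id-\SS^\star)|_{\partial_+ SM}$, whose action on the relevant harmonics is exactly \eqref{eq:tempid}. Since the prior symmetry analysis already gives $P_+(\V_-)=0$ and $P_-(\V_+)=0$, it suffices to evaluate $P_+$ on the even-harmonic basis $\{u_{p,q}\}$ of $\V_+$ and $P_-$ on the odd-harmonic basis $\{v'_{p,q}\}$ of $\V_-$; this pairing is forced by the fact that $H_+$ annihilates odd fiber-harmonics and $H_-$ the even ones.

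For ingredient (a): because $u_{p,q}$ is $\SS_A$-even on $\partial_+ SM$, its $\SS$-even extension is simply the full-circle function $A_+ u_{p,q}=\phi_{p,q}+(-1)^p\phi_{p,p-q}$, as one checks from \eqref{eq:Sstar} (it is $\SS^\star$-invariant and restricts to $u_{p,q}$); likewise $A_+ v'_{p,q}=\phi'_{p,q}-(-1)^p\phi'_{p,p-q-1}$. For (b), reading the fiber variable through $\theta=\beta+\pi+\alpha$, the function $\phi_{p,q}$ carries fiber-harmonic $2q$ and $\phi'_{p,q}$ carries $2q+1$, so $H\phi_{p,q}=-i\sgn{q}\,\phi_{p,q}$ and $H\phi'_{p,q}=-i\sgn{2q+1}\,\phi'_{p,q}$. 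Combining (a)--(c) and invoking \eqref{eq:tempid} I would arrive at the two master formulas
\begin{align*}
  P_+ u_{p,q} &= -i\big(\sgn{q}-\sgn{p-q}\big)\,v_{p,q}, \\
  P_- v'_{p,q} &= -i\big(\sgn{2q+1}-\sgn{2p-2q-1}\big)\,u'_{p,q}.
\end{align*}
Reading off these sign differences over the canonical index ranges $p\le 2q$ (respectively $p\le 2q+1$) of \eqref{eq:bases} then produces the piecewise values claimed: the difference equals $2$ on the interior of the active cone, $1$ on its diagonal edge, and $0$ elsewhere. Note that for $P_-$ the quantity $2p-2q-1$ is always odd, hence never hits the value $0$ of $\sgn{\cdot}$; this is exactly why its table shows only the values $-2i$ and $0$, whereas for $P_+$ the loci $q=0$ and $p=q$ make $\sgn{q}$ or $\sgn{p-q}$ vanish and thereby create the half-weight edge cases.

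The remaining work is purely in normalization and index ranges, and this is where I expect the only real friction. First I would record the norms: $\|u_{p,q}\|=\|v_{p,q}\|=\sqrt2$ away from the degenerate locus $p=2q$ (where $v_{p,q}=0$ and $\|u_{p,q}\|=2$), and analogously for the primed family at $p=2q+1$, so that passing to the hatted vectors is harmless except on that locus, where the output vanishes anyway. Second, the redundancies $u_{p,p-q}=(-1)^p u_{p,q}$, $v_{p,p-q}=-(-1)^p v_{p,q}$ and their primed analogues must be used to return every output to the reduced $(p,q)$ range. The hard part is therefore the exhaustive, sign-sensitive case analysis at the boundaries of the index cones (notably $p=q$ and $q=0$ for $P_+$, and the edge $p=2q+1$ for $P_-$), where the step functions $\sgn{\cdot}$ jump; this edge bookkeeping is the genuine source of the distinct spectral entries $2i$ versus $i$ in the statement, and it is the step I would carry out most carefully.
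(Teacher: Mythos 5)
Your proposal is correct and takes essentially the same route as the paper's own proof: you rewrite $P_\pm$ as $(Id-\SS^\star)H$ applied to the full-circle extensions of the $\V_+$/$\V_-$ basis elements (computed in the bases $\B$ and $\B'$ respectively, exactly as the paper does via its operators $E_\pm$), use the diagonal action of $H$ on fiber harmonics $2q$ and $2q+1$ together with the identities \ref{eq:tempid}, and land on the paper's very master formulas $P_+u_{p,q}=-i\left(\sgn{2q}-\sgn{2(p-q)}\right)v_{p,q}$ and $P_-v'_{p,q}=-i\left(\sgn{2q+1}-\sgn{2p-2q-1}\right)u'_{p,q}$, with your added bookkeeping of norms and redundancies being accurate. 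One caveat: on the edge $q=0$, $p<0$ your formula (like the paper's) actually yields $-i\,\widehat{v_{p,q}}$, so the entry $i\,\widehat{v_{p,q}}$ in the stated table is a sign discrepancy between the proposition and the computation itself, not a flaw introduced by your argument.
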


\begin{proof} Let $E_\pm:L^2(\partial_+ SM)\to L^2(\partial SM)$ the operators of even/odd extension with respect to the antipodal map $\alpha\mapsto \alpha+\pi$. It is important to note for the sequel that 
    \begin{itemize}
	\item[$(i)$] $E_\pm\equiv A_\pm$ on $\V_+$.
	\item[$(ii)$] $E_\pm \equiv A_{\mp}$ on $\V_-$.
	\item[$(iii)$] Applying $E_+$ to an expression in the basis $\B$ consists of extending this expression to $\alpha\in \Sm^1$.
	\item[$(iv)$] Applying $E_-$ to an expression in the basis $\B'$ consists of extending this expression to $\alpha\in \Sm^1$.
    \end{itemize}  
    Using the observations $(i)$-$(iv)$ above, we are then able to derive the following alternative definitions for $P_\pm$: 
    \begin{align*}
	P_+ f &= ((Id-\SS^\star) H E_+ f)|_{\partial_+ SM}, \quad f\in \V_+, \\
	P_- g &= ((Id-\SS^\star) H E_- g)|_{\partial_+ SM}, \quad g\in \V_-.
    \end{align*}
    These expressions, together with remarks $(iii)-(iv)$ above, suggest that the action of $P_+$ is best described in the basis $\B$ while that of $P_-$ is best described in the basis $\B'$. Using identities \ref{eq:tempid}, we then compute
    \begin{align*}
	P_+ u_{p,q} &= (Id-\SS^\star)H E_+ u_{p,q} \\
	&= -i (Id-\SS^\star) (\sgn{2q} \phi_{p,q} + \sgn{2(p-q)} (-1)^p \phi_{p,p-q}) \\
	&= -i(\sgn{2q} - \sgn{2(p-q)}) v_{p,q}.
    \end{align*}
    Similarly, we have for $P_-$
    \begin{align*}
	P_- v'_{p,q} &= (Id-\SS^\star)H E_- v'_{p,q} \\
	&= -i (Id-\SS^\star) (\sgn{2q+1}\phi'_{p,q} - \sgn{2p-2q-1} (-1)^p \phi'_{p,p-q-1}) \\
	&= -i(\sgn{2q+1}-\sgn{2p-2q-1}) u'_{p,q}.
    \end{align*}
    Studying the values of these signs by splitting cases yields the result. 
\end{proof}

Figure \ref{fig:ranges} locates the ranges of $P_\pm$ in the $(p,q)$ planes describing the spaces $\V_\pm$.

\begin{figure}[htpb]
    \centering
    \includegraphics[width=0.49\textwidth]{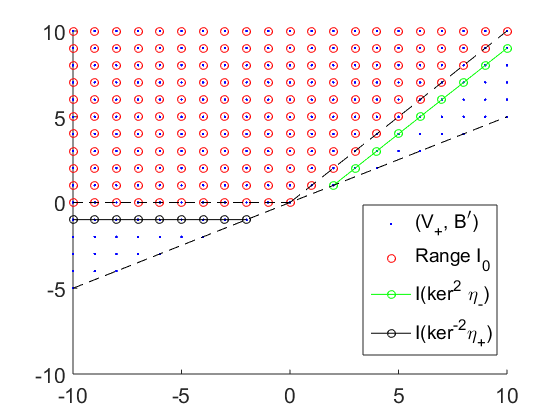}
    \includegraphics[width=0.49\textwidth]{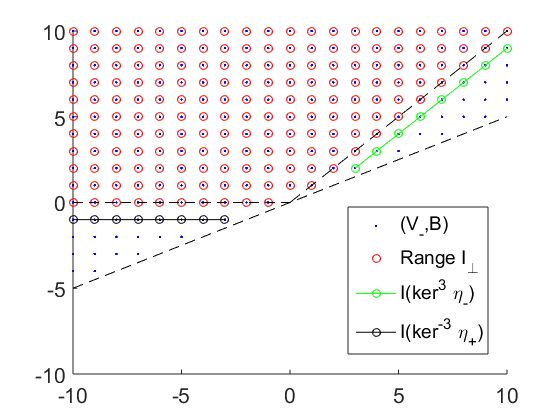}
    \caption{A visualization of the range decomposition of $I$ over even and odd tensors, sitting in $\V_+$ and $\V_-$ respectively. Both axes describe a $(p,q)$ range according to the equations defining $u'_{p,q}$ and $v_{p,q}$ as in \ref{eq:bases}. The range of $P_-$ matches that of $I_0$ (left) while the range of $P_+$ matches that of $I_\perp$ (right). On the right, the subspace $I_\perp (\dot{H}^1(\ker^0 \eta_-,M))$ is the set $\{p=q\}$ (basis elements in Range $I_\perp$ closest to $I(\ker^3 \eta_-)$), and the subspace $I_\perp (\dot{H}^1(\ker^0 \eta_+,M))$ is the set $\{q=0\}$ (basis elements in Range $I_\perp$ closest to $I(\ker^{-3} \eta_+)$).}
    \label{fig:ranges}
\end{figure}

Of practical interest is the ability to project noisy data onto the range of $I_0$. While the operator $P_-$ characterizes the range of $I_0$, it is not a projection operator and in fact annihilates the range of $I_0$ (indeed, $P_-^2 =0$ since $P_-(\V_+) = 0$ and $P_-(\V_-)\subset \V_+$). For projection purposes, let us now introduce the operator $C := \frac{1}{2} A_-^\star H A_-$. The decomposition $H = H_+ + H_-$ yields a decomposition $C = C_+ + C_-$ where $C_\pm = \frac{1}{2} A_-^\star H_\pm A_-$. This time, a direct inspection of symmetries shows that 
\begin{align*}
    C_+ (\V_+) = 0, \qquad C_+ (\V_-) \subset \V_-, \qquad C_- (\V_+) \subset \V_+, \qquad C_-(\V_-) = 0.
\end{align*}
In matrix notation in the $\V_+ \oplus \V_-$ decomposition, one may think of $C$ as $C = \left[\begin{smallmatrix} C_- & 0 \\ 0 & C_+ \end{smallmatrix} \right]$. By similar considerations as in the proof of Proposition \ref{prop:SVDP}, we can then prove that for any couple $(p,q)$, 
\begin{align*}
    C_+ v_{p,q} &= -i \frac{\sgn{2q} + \sgn{2(p-q)}}{2}v_{p,q}, \\
    C_- u'_{p,q} &= -i \frac{\sgn{2q+1}+\sgn{2p-2q-1}}{2} u'_{p,q}.  
\end{align*}

Splitting cases according to $(p,q)$, we arrive at the following
\begin{proposition}\label{prop:SVDC}
    The operators $C_-$ and $C_+$ admit the following eigenvalue decompositions:
    \begin{align*}
	&\text{For any } (p,q) \text{ with } p<2q+1, \quad C_+ v_{p,q} = \left\lbrace
	\begin{array}{lr}
	    i\ v_{p,q} & \text{if } q<0 \text{ and } p<q, \\
	    -i\ v_{p,q} & \text{if } q>0  \text{ and } p>q, \\
	    \frac{i}{2}\ v_{p,q} & \text{if } q=0 \text{ and } p<0, \\
	    \frac{-i}{2}\ v_{p,q} & \text{if } q>0 \text{ and } p=q, \\
	    0 & \text{otherwise,}
	\end{array}
	\right. \\
          &\text{For any } (p,q) \text{ with } p<2q, \quad C_- u'_{p,q} =  \left\lbrace
	\begin{array}{lr}
	    i\ u'_{p,q} & \text{if } q<\frac{-1}{2} \text{ and } p<q+\frac{1}{2}, \\
	    -i\ u'_{p,q} & \text{if } q>\frac{-1}{2} \text{ and } p>q+\frac{1}{2}, \\
	    0 & \text{otherwise.}
	\end{array}
	\right.
    \end{align*}
\end{proposition}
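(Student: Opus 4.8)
The plan is to take as given the two eigenvalue identities stated immediately above the proposition, namely $C_+ v_{p,q} = -i \frac{\sgn{2q} + \sgn{2(p-q)}}{2} v_{p,q}$ and $C_- u'_{p,q} = -i \frac{\sgn{2q+1} + \sgn{2p-2q-1}}{2} u'_{p,q}$, and to reduce the proposition to an explicit evaluation of these scalar prefactors over the $(p,q)$ lattice. These identities are obtained exactly as in the proof of Proposition \ref{prop:SVDP}: one rewrites $C_\pm$ on $\V_\mp$ through the even/odd extensions $E_\pm$ (using $E_\pm \equiv A_\mp$ on $\V_-$ and $E_\pm \equiv A_\pm$ on $\V_+$), applies $H$ diagonally in the $\B$ (resp.\ $\B'$) representation, and reads off the coefficient. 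Since $\{v_{p,q}\}$ and $\{u'_{p,q}\}$ are bases of $\V_-$ and $\V_+$ by \ref{eq:bases}, each is thereby exhibited as an eigenvector, and the two collections furnish the claimed eigenvalue decompositions.

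For $C_+$ I would first simplify $\sgn{2q} = \sgn{q}$ and $\sgn{2(p-q)} = \sgn{p-q}$, so that the eigenvalue is $-i$ times the average of $\sgn{q}$ and $\sgn{p-q}$. The case split is then immediate: both signs equal to $+1$ (that is, $q>0$ and $p>q$) gives $-i$; both equal to $-1$ (that is, $q<0$ and $p<q$) gives $+i$; opposite signs cancel and give $0$; and the two degenerate cases in which exactly one of the two signs vanishes produce the half-integer values --- $q=0$ with $p<0$ gives $\frac{i}{2}$, and $q>0$ with $p=q$ gives $\frac{-i}{2}$. This is exactly where the eigenvalues $\pm\frac{i}{2}$ of the statement originate, and they constitute the only real subtlety, arising precisely because $\sgn{2q}$ or $\sgn{2(p-q)}$ may vanish.

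For $C_-$ the evaluation is cleaner: the arguments $2q+1$ and $2p-2q-1$ are odd integers, hence never zero, so each sign equals exactly $\pm 1$ and no half-integer eigenvalue can occur. Rewriting the sign conditions as inequalities via $2q+1>0 \iff q>-\frac{1}{2}$ and $2p-2q-1>0 \iff p>q+\frac{1}{2}$, the average of the two signs is $+1$ when $q>-\frac{1}{2}$ and $p>q+\frac{1}{2}$ (eigenvalue $-i$), is $-1$ when $q<-\frac{1}{2}$ and $p<q+\frac{1}{2}$ (eigenvalue $+i$), and is $0$ otherwise.

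Finally, as a consistency check I would verify that each eigenvalue is invariant under the redundancy involutions of the two bases --- $q\mapsto p-q$ for $v_{p,q}$ (recall $v_{p,p-q} = -(-1)^p v_{p,q}$) and $q\mapsto p-q-1$ for $u'_{p,q}$. In the first case $\sgn{q} + \sgn{p-q}$ is manifestly symmetric under $q\leftrightarrow p-q$, and in the second $\sgn{2q+1} + \sgn{2p-2q-1}$ is symmetric under $q\leftrightarrow p-q-1$, so the scalar prefactor descends to a well-defined function on the reduced $(p,q)$ ranges of \ref{eq:bases} and the case lists are unambiguous. There is no genuine analytic obstacle here: the work is entirely the bookkeeping of the degenerate (vanishing-sign) cases for $C_+$ and the correct reading of the reduced index ranges.
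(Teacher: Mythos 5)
Your proposal is correct and takes essentially the same route as the paper, which likewise derives the scalar identities $C_+ v_{p,q} = -i\frac{\sgn{2q}+\sgn{2(p-q)}}{2}\,v_{p,q}$ and $C_- u'_{p,q} = -i\frac{\sgn{2q+1}+\sgn{2p-2q-1}}{2}\,u'_{p,q}$ by the extension-operator argument of Proposition \ref{prop:SVDP} and then simply splits cases in $(p,q)$; your bookkeeping, including the degenerate half-eigenvalues $\pm\frac{i}{2}$ arising exactly when $\sgn{2q}$ or $\sgn{2(p-q)}$ vanishes, reproduces the stated decomposition. Your final consistency check under the redundancy involutions $q\mapsto p-q$ and $q\mapsto p-q-1$ is a correct (and welcome) addition that the paper leaves implicit.
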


\subsection{Equivalence of range characterizations of $I_0$ - Proof of Theorem \ref{thm:equivalence}} \label{sec:PUHL}

The classical moment conditions state, in parallel coordinates (see, e.g., \cite{Epstein2008,Natterer2001}), that some data $\R$ is the Radon transform of some function if and only if $\R(s,\theta) = \R(-s, \theta+ \pi)$ and for every integer $n\ge 0$, the moment function $p_n(\theta) = \int_{\Rm} s^n \R(s,\theta)\ d\theta$ is a homogeneous polynomial of degree $n$ in $(\cos\theta, \sin \theta)$. This can be recast as a set of orthogonality conditions in the following form
\begin{align*}
    \int_{\Sm^1} \int_\Rm s^n e^{\pm ik\theta} \R(s,\theta)\ ds\ d\theta = 0, \quad \forall\ k> n, \quad k-n \text{ even}. 
\end{align*} 
Assuming that said function is supported in the unit disk and changing variables into fan-beam coordinates $(s = \sin\alpha, \theta = \beta+\pi+\alpha)$, the moment conditions are expressed as orthogonality conditions for the data $\D(\beta,\alpha) = \R(\sin\alpha, \beta+\pi+\alpha)$, of the form
\begin{align}
    \int_{\partial_+ SM} \!\!\!\!\!\!\! \cos\alpha\ \sin^n\alpha\ e^{\pm ik (\beta+\alpha)} \D(\beta,\alpha)\ d\alpha\ d\beta = 0, \quad\forall\ n \ge 0,\quad \forall\ k>n, \quad k-n \text{ even}, 
    \label{eq:HL}
\end{align}
while the symmetry condition simply states that $\D\in \V_+$. 

We now prove Theorem \ref{thm:equivalence}, i.e., that this set of orthogonality conditions is strictly equivalent to proving that the frequency content of $\D$ is contained in the range of $P_-$. The proof of Theorem \ref{thm:equivalence} makes use of the following lemma, whose proof (e.g., by induction) is left to the reader:
\begin{lemma} \label{lem:trigo}
    For every natural integer $n\ge 0$, we have
    \begin{align}
	\tspan \{ \cos\alpha \sin^{2k}\alpha,\ 0\le k\le n \} &= \tspan \{ \cos((2k+1)\alpha),\ 0\le k\le n\},  \label{eq:trigo1} \\
	\tspan \{ \cos\alpha \sin^{2k+1}\alpha,\ 0\le k\le n \} &= \tspan \{ \sin(2(k+1)\alpha),\ 0\le k\le n\}.  \label{eq:trigo2}
    \end{align}
\end{lemma}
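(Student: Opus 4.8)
The plan is to prove each identity by exhibiting an invertible triangular change of basis between the two spanning families, which simultaneously establishes linear independence of both sides and their equality. Both families in \ref{eq:trigo1} (and likewise in \ref{eq:trigo2}) consist of exactly $n+1$ functions, so it suffices to show that each generator on the left lies in the span on the right via a transition matrix that is lower-triangular with nonzero diagonal entries; invertibility of that matrix then forces the two spans to coincide.

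For \ref{eq:trigo1}, the first step is to record the power-reduction expansion $\sin^{2k}\alpha = \sum_{j=0}^{k} a_{kj}\cos(2j\alpha)$, valid because $\sin^{2k}\alpha = ((1-\cos 2\alpha)/2)^k$ is an even trigonometric polynomial, and whose top coefficient $a_{kk}$ is nonzero. Multiplying by $\cos\alpha$ and applying the product-to-sum formula $\cos\alpha\cos(2j\alpha) = \tfrac12(\cos((2j+1)\alpha)+\cos((2j-1)\alpha))$ then expresses $\cos\alpha\sin^{2k}\alpha$ as a linear combination of $\cos((2j+1)\alpha)$ for $0\le j\le k$, in which the coefficient of the top frequency $\cos((2k+1)\alpha)$ is a nonzero multiple of $a_{kk}$. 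Hence the transition matrix from $\{\cos\alpha\sin^{2k}\alpha\}_{k=0}^n$ to $\{\cos((2k+1)\alpha)\}_{k=0}^n$ is lower-triangular with nonvanishing diagonal, and the two spans agree.

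The argument for \ref{eq:trigo2} is identical in structure, working with odd trigonometric polynomials. One writes $\sin^{2k+1}\alpha = \sum_{j=0}^k b_{kj}\sin((2j+1)\alpha)$ with $b_{kk}\neq 0$, then multiplies by $\cos\alpha$ using $\cos\alpha\sin((2j+1)\alpha) = \tfrac12(\sin((2j+2)\alpha)+\sin(2j\alpha))$. Since $\sin 0 = 0$, the outcome is a combination of $\sin(2(j+1)\alpha)$ for $0\le j\le k$, with a nonzero top coefficient at $\sin(2(k+1)\alpha)$, so triangularity again yields the claim.

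The only real work is the bookkeeping of frequency ranges: one must check that no frequency outside the stated range appears and that each diagonal coefficient is nonzero. There is no conceptual obstacle here, which is why the verification can be deferred to the reader. An equivalent fully inductive phrasing would take the base cases $n=0$ (namely $\cos\alpha$ on both sides of \ref{eq:trigo1}, and $\cos\alpha\sin\alpha = \tfrac12\sin 2\alpha$ for \ref{eq:trigo2}) and use the top-frequency computation above to adjoin one generator at each step; the triangular-matrix viewpoint simply packages that same induction more transparently.
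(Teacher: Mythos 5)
Your proof is correct and takes essentially the approach the paper intends: the paper gives no proof of Lemma \ref{lem:trigo}, explicitly leaving it to the reader with the hint ``e.g., by induction,'' and your triangular change-of-basis argument (with the power-reduction expansions and product-to-sum identities supplying the nonzero top coefficients) is precisely that induction made explicit, as you observe yourself. The frequency bookkeeping checks out in both cases, including the $j=0$ term of \ref{eq:trigo2} where $\sin 0 = 0$ keeps the output inside the stated family.
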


\begin{proof}[Proof of Theorem \ref{thm:equivalence}] According to \ref{eq:HL}, $\D$ satisfies the moment conditions if and only if, for every $k>0$, 
    \begin{align*}
	\D\ \bot\ \tspan \{ \sin^n \alpha \cos \alpha e^{\pm ik(\alpha+\beta)}, \quad n = 0,\cdots, k-1, \quad k-n \text{ even}\}.
    \end{align*} 
    We now rewrite these $k$-dependent spans in terms of our basis functions $u'_{p,q}$, splitting cases according to the parity of $k$. 
    
    \noindent{\bf Case $k$ even.} Let $k=2p$ for some $p>0$, then the span above equals
    \begin{align*}
	\tspan &\{ \sin^n \alpha \cos \alpha e^{\pm i2p(\alpha+\beta)}, \quad n = 0,\cdots, 2p-1, \quad 2p-n \text{ even}\} \\
	&= \tspan \{ \sin^{2\ell} \alpha \cos \alpha e^{\pm i2p(\alpha+\beta)}, \quad \ell = 0, \cdots, p-1\} \\
	&= \tspan \{ \cos((2\ell+1)\alpha) e^{\pm i2p(\alpha+\beta)}, \quad \ell = 0, \cdots, p-1\} \qquad\qquad (\text{by } \ref{eq:trigo1}) \\
	&= \tspan \{ u'_{2p,l+p}\}_{\ell = 0}^{p-1} \oplus \tspan \{ u'_{-2p,-p-l-1}\}_{\ell = 0}^{p-1}. 
    \end{align*}
    \noindent{\bf Case $k$ odd.} Let $k=2p+1$ for some $p\ge 0$, then the span above equals
    \begin{align*}
	\tspan &\{ \sin^n \alpha \cos \alpha e^{\pm i(2p+1)(\alpha+\beta)}, \quad n = 0,\cdots, 2p, \quad 2p+1-n \text{ even}\} \\
	&= \tspan \{ \sin^{2\ell+1} \alpha \cos \alpha e^{\pm i(2p+1)(\alpha+\beta)}, \quad \ell = 0, \cdots, p-1\} \\
	&= \tspan \{ \sin (2\ell\alpha) e^{\pm i(2p+1)(\alpha+\beta)}, \quad \ell = 1, \cdots, p\} \qquad\qquad (\text{by } \ref{eq:trigo2}) \\
	&= \tspan \{ u'_{2p+1,p+\ell} \}_{\ell=1}^p \oplus \tspan \{ u'_{-2p-1, l-p-1} \}_{\ell=1}^p. 
    \end{align*}
    
    Now, according to Proposition \ref{prop:SVDP}, the orthocomplement of the range of $P_-$ is exactly the $(p,q)$ region spanned by 
    \[ \left\{ u'_{p,q},\quad p\le 2q, \quad q\le 0 \text{ or } p\ge q+\frac{1}{2} \right\}. \]
    The previous two calculations describe this set exactly, $p$-slice by $p$-slice. Theorem \ref{thm:equivalence} is proved. 
\end{proof}

\noindent{\bf On the projection of noisy data onto the range of $I_0$.}
In light of the past two sections, we now propose two approaches in order to project noisy data onto the range of $I_0$. Both approaches are extremely fast as they are based solely on the scattering relation and the fiberwise Hilbert transform (which can be computed slice-by-slice using Fast Fourier Transform), and they allow, by virtue of Theorem \ref{thm:equivalence} to enforce the moment conditions {\em at all orders} at once. 

\begin{enumerate}
    \item For inversion purposes, the reconstruction formula \ref{eq:reconsI0} for functions already contains a projection step. Indeed, applying $A_+^\star H_- A_-$ to the data before backprojection, as prescribed by the formula amounts to applying (up to sign) the adjoint of $P_- = A_-^\star H_- A_+$. By Proposition \ref{prop:SVDP}, applying this operator to the data will remove the content which is supported on the orthocomplement of range $P_-$, while moving data from the space $\V_+$ to $\V_-$, a necessary step before applying backprojection $I_\perp^\sharp$. 
    \item Based on Proposition \ref{prop:SVDC}, we have that $Id + C_-^2$ is exactly the projection onto the range of $P_-$, i.e. the range of $I_0$.  
\end{enumerate}

\subsection{The operator $I_\perp$} \label{sec:Iperp}

We have defined $I_\perp:\dot{H}^{1}(M) \to L^2(\partial_+ SM)$ while the operator ``$I_1$'' defined in \cite{Pestov2004} is the ray transform restricted to one-forms. By virtue of the Helmholtz-Hodge decomposition for a square-integrable one-form $\omega$:
\begin{align*}
  \omega = X f + X_\perp g, \qquad f\in H^1_0(M), \quad g\in \dot{H}^1(M),    
\end{align*}
we have that ``$I_1$''$(\omega) = I(Xf) + I(X_\perp g) = I_\perp g$, so that the range of these transforms is the same, characterized in frequency content by the operator $P_+$. While the reconstruction formula inverting $I_\perp$ in \cite{Pestov2004} only applies to functions in $H^1_0(M)$, we now state by how much the spaces $H^1_0(M)$ and $\dot{H}^1(M)$ differ. 

\begin{lemma} \label{lem:H1}
    The following direct decomposition holds: 
    \begin{align}
	\dot{H}^1(M) = H^1_0 (M) + \dot{H}^1(\ker^0 \eta_+,M) + \dot{H}^1(\ker^0 \eta_-,M).
	\label{eq:H1}
    \end{align}    
\end{lemma}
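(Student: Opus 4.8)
The plan is to reduce everything to the boundary trace, exploiting that the only feature separating $\dot{H}^1(M)$ from $H^1_0(M)$ is the (mean-free) boundary value. Given $u\in \dot{H}^1(M)$, its trace $t:=u|_{\partial M}$ lies in $H^{1/2}(\partial M)$ and has vanishing mean, so its Fourier expansion on $\partial M=\{e^{i\beta}\}$ reads $t=\sum_{k\neq 0} c_k e^{ik\beta}$. I would split this into its positive- and negative-frequency halves $t_-:=\sum_{k\ge 1}c_k e^{ik\beta}$ and $t_+:=\sum_{k\le -1}c_k e^{ik\beta}$, each of which remains in $H^{1/2}(\partial M)$ since the frequency cut-offs are bounded Fourier multipliers on $H^s(\Sm^1)$.

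First I would establish existence. Let $h_-$ and $h_+$ be the harmonic extensions of $t_-$ and $t_+$ to $M$; explicitly $h_-(z)=\sum_{k\ge 1}c_k z^k$ and $h_+(z)=\sum_{k\le -1}c_k \bar z^{|k|}$, so that $h_-$ is holomorphic ($\dbar h_-=0$) and $h_+$ is anti-holomorphic ($\partial h_+=0$), placing them in $\ker^0\eta_-$ and $\ker^0\eta_+$ respectively. Both have mean-free traces, hence lie in $\dot{H}^1(\ker^0\eta_-,M)$ and $\dot{H}^1(\ker^0\eta_+,M)$, provided they belong to $H^1(M)$ — which is guaranteed by the continuity of the harmonic extension operator $H^{1/2}(\partial M)\to H^1(M)$. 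Setting $f_0:=u-h_+-h_-$, the trace of $f_0$ is $t-t_+-t_-=0$, so $f_0\in H^1_0(M)$, and $u=f_0+h_++h_-$ is a decomposition of the desired form.

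It then remains to prove directness. Suppose $f_0+h_++h_-=0$ with $f_0\in H^1_0(M)$, $h_+\in\dot{H}^1(\ker^0\eta_+,M)$ and $h_-\in\dot{H}^1(\ker^0\eta_-,M)$. Taking traces kills $f_0$ and yields $h_+|_{\partial M}+h_-|_{\partial M}=0$; but $h_+|_{\partial M}$ is supported on strictly negative Fourier frequencies while $h_-|_{\partial M}$ is supported on strictly positive ones, so both traces vanish separately. A holomorphic (resp. anti-holomorphic) $H^1$ function on the disk with vanishing boundary trace is identically zero, since its boundary Fourier coefficients are exactly its Taylor coefficients; hence $h_\pm=0$ and then $f_0=0$, so the three summands are uniquely determined.

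The routine but essential technical point, which I would treat as the crux, is the $H^1(M)$-membership of $h_\pm$: this rests on the trace theorem together with boundedness of the harmonic (equivalently holomorphic/anti-holomorphic) extension from $H^{1/2}(\partial M)$ to $H^1(M)$, and on the fact that the positive/negative frequency projections preserve $H^{1/2}(\partial M)$. Everything else — the identification of the two analytic pieces with $\ker^0\eta_\mp$ and the directness argument — is immediate once the decomposition is phrased in terms of boundary Fourier modes.
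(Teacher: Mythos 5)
Your proof is correct and follows essentially the same route as the paper's: splitting the mean-free boundary trace into its positive and negative Fourier frequencies, extending these holomorphically and anti-holomorphically into the disk, and subtracting to obtain the $H^1_0(M)$ part. You additionally spell out the $H^1$-boundedness of the extensions and the directness argument, which the paper leaves implicit, but the underlying idea is identical.
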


\begin{proof}[Proof of Lemma \ref{lem:H1}.]
    Let $g\in \dot{H}^1(M)$. Then by trace theorems, $g|_{\partial M}\in H^{\frac{1}{2}}(\partial M)$, and $g|_{\partial M}$, with zero average at the boundary, can be written uniquely as
    \begin{align*}
	g|_{\partial M} (e^{i\theta}) = \sum_{k=1}^\infty a_k e^{ik\theta} + b_k e^{-ik\theta}, \qquad \sum_{k=1}^\infty (|b_k|^2 + |a_k|^2) (1+k) < \infty. 
    \end{align*}
    We now define, inside the domain 
    \begin{align*}
	g_{(-)} (z) := \sum_{k=1}^\infty a_k z^k, \qquad g_{(+)} (z) := \sum_{k=1}^{\infty} b_k \zbar^k.
    \end{align*}
    It is easy to see that $g_{(\pm)} \in \dot{H}^1(\ker^0 \eta_\pm,M)$, and that one may then decompose $g$ uniquely into 
    \begin{align*}
	g = g_{(0)} + g_{(+)} + g_{(-)}, \qquad g_{(0)} := g - g_{(+)} - g_{(-)} \in H^1_0(M).
    \end{align*}
    The lemma is proved.
\end{proof}

We now show that upon applying $I_\perp$ to $g$, the three resulting summands live on orthogonal subspaces of $\V_-$, each of which corresponding to a different nonzero spectral value of $P_+$, and to the $0,\frac{i}{2}$ and $\frac{-i}{2}$-eigenspaces of $C_+$. 

\begin{proposition}\label{prop:Iperpdecomposition}
    The mapping $I_\perp:\dot{H}^1(M)\to \V_-$ maps the direct decomposition \ref{eq:H1} into three orthogonal subspaces of $\V_-$, corresponding to the three distinct nonzero spectral values of $P_+$ (as described in Proposition \ref{prop:SVDP}). Moreover, 
    \begin{align*}
	I_\perp (H_0^1(M)) \subset \ker(C_+), \quad\text{ and }\quad I_\perp (\dot{H}^1(\ker^0 \eta_\pm,M)) \subset \ker (C_+ \mp \frac{i}{2} Id). 
    \end{align*}
\end{proposition}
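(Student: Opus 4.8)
The plan is to leverage the decomposition $\dot{H}^1(M) = H^1_0(M) + \dot{H}^1(\ker^0\eta_+,M) + \dot{H}^1(\ker^0\eta_-,M)$ from Lemma \ref{lem:H1}, and to trace precisely where each summand lands in $\V_-$ under $I_\perp$. Since $I_\perp h = I[X_\perp h]$ and the range of $I_\perp$ is characterized by $P_+$ (via the Pestov--Uhlmann result), the key is to identify which nonzero spectral subspaces of $P_+$ (respectively which eigenspaces of $C_+$) are hit by each of the three pieces. First I would treat the two ``analytic'' summands $\dot{H}^1(\ker^0\eta_\pm,M)$: for $g_{(\pm)}$ holomorphic (resp. antiholomorphic), the structure of $X_\perp g_{(\pm)} = \tfrac{1}{i}(\eta_+ - \eta_-)g_{(\pm)}$ simplifies dramatically since one of $\partial g_{(+)}$ or $\dbar g_{(-)}$ vanishes, so $X_\perp g_{(\pm)}$ has only a single nonzero harmonic in $\theta$. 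I would then compute $I_\perp g_{(\pm)}$ explicitly against the basis $\B'$ and identify its frequency content with a specific $(p,q)$-slice in $\V_-$.

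\medskip

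The cleanest route to pinning down the eigenvalues is through $C_+$ rather than $P_+$ directly, since Proposition \ref{prop:SVDC} gives $C_+$ a diagonal action on the $v_{p,q}$ with explicit eigenvalues $0, \pm i, \pm\tfrac{i}{2}$. The claim decomposes into three inclusions: (1) $I_\perp(H^1_0(M)) \subset \ker C_+$; (2) $I_\perp(\dot{H}^1(\ker^0\eta_+,M)) \subset \ker(C_+ - \tfrac{i}{2}Id)$; and (3) $I_\perp(\dot{H}^1(\ker^0\eta_-,M)) \subset \ker(C_+ + \tfrac{i}{2}Id)$. For (1), I would invoke the fact that the reconstruction of $I_\perp$ over $H^1_0(M)$ works via the Pestov--Uhlmann formula and that $H^1_0$ is exactly the part reconstructed by the ``bulk'' formula, which corresponds to the $-2i$ spectral value of $P_+$ (the interior region $\{q>0, p<q\}$ in Figure \ref{fig:ranges}); by Proposition \ref{prop:SVDC}, the matching $v_{p,q}$ in that open region satisfy $C_+ v_{p,q}=0$. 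For (2) and (3), the boundary behavior of $g_{(\pm)}$ forces the image into the two ``edge'' slices $\{p=q\}$ and $\{q=0\}$ respectively, which are precisely the $\pm\tfrac{i}{2}$-eigenspaces of $C_+$ listed in Proposition \ref{prop:SVDC}. The caption of Figure \ref{fig:ranges} already records this correspondence, so the geometry is on our side.

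\medskip

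The orthogonality of the three images in $\V_-$ then follows immediately from the spectral theorem: they are contained in eigenspaces of the (skew-adjoint) operator $C_+$ associated with distinct eigenvalues $0, \tfrac{i}{2}, -\tfrac{i}{2}$, hence mutually $L^2(\partial_+SM)$-orthogonal. This also recovers the statement that they correspond to the three distinct nonzero spectral values of $P_+$, since the $P_+$ singular values $-2i, -i, i$ are supported on the open region, the $\{p=q\}$ slice, and the $\{q=0\}$ slice respectively, matching the three eigenspaces of $C_+$.

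\medskip

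\noindent\textbf{Main obstacle.} I expect the crux to be the explicit computation of $I_\perp g_{(\pm)}$ and the verification that its frequency content lands exactly on the $\{p=q\}$ (resp. $\{q=0\}$) slice, rather than spilling into the interior region. This requires carefully evaluating the X-ray transform of $X_\perp g_{(\pm)}$ in fan-beam coordinates for a (anti)holomorphic boundary datum and decomposing it in the basis $\B'$; the boundary (rather than compact) support of $g_{(\pm)}$ is what distinguishes these edge slices from the interior, and the fact that these analytic terms are never compactly supported inside $M$ (as noted in the remark after Theorem \ref{thm:recons}) is precisely what makes this delicate. Once the frequency support is correctly localized, matching it against Proposition \ref{prop:SVDC} is routine.
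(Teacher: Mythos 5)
Your treatment of the two analytic summands and of the final orthogonality step matches the paper: the paper also computes $I_\perp g_{(\pm)}$ exactly (using $\eta_\mp g_{(\mp)}=0$ to write $X_\perp g_{(-)} = -iXg_{(-)}$ and $X_\perp g_{(+)} = iXg_{(+)}$, so that the fundamental theorem of calculus reduces $I_\perp g_{(\pm)}$ to boundary differences), finds that the images are spanned by $\{v_{k,k}\}_{k\ge 1}$ and $\{v_{-k,0}\}_{k\ge 1}$ respectively, and reads off the eigenvalues $\mp\frac{i}{2}$ of $C_+$ from Proposition \ref{prop:SVDC}. Note that this makes your ``main obstacle'' a non-issue: the computation is exact, so there is no question of the image ``spilling'' into the interior region, and these expansions naturally come out in the $\B$-based vectors $v_{p,q}$ (on which Proposition \ref{prop:SVDC} is stated), not in $\B'$.

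The genuine gap is in your inclusion (1), $I_\perp(H^1_0(M))\subset\ker C_+$. You justify it by asserting that ``$H^1_0$ is exactly the part reconstructed by the bulk formula, which corresponds to the $-2i$ spectral value of $P_+$,'' citing Figure \ref{fig:ranges}. This is circular — the figure caption records precisely the content of the proposition being proved — and the existence of the Pestov--Uhlmann inversion formula on $H^1_0(M)$ does not localize the image: it recovers $f$ from $I_\perp f$ but says nothing about which part of $\text{Range } P_+$ the data $I_\perp f$ occupies (indeed, the paper later observes that applying that formula to $I_\perp(g_{(0)}+g_{(+)}+g_{(-)})$ picks up a fraction of $g_{(\pm)}$, which shows the filtering operator does not separate the pieces by itself). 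What is needed, and what the paper supplies, is Lemma \ref{lem:IperpH10}: for $f\in H^1_0(M)$ one has $\int_{-\pi/2}^{\pi/2} I_\perp f(\beta,\alpha)\,d\alpha = 0$ for a.e.\ $\beta$, proved for $f\in\C^\infty_c(M)$ by rewriting the integrand as an exact $\partial_\alpha$-derivative (chain rule plus $f|_{\partial M}=0$) and extending by density. Combined with the identities $v_{k,k} = -(Id-\SS_A^\star)\phi_{k,0}$, $v_{-k,0} = (Id-\SS_A^\star)\phi_{-k,0}$ and the oddness $\SS_A^\star I_\perp f = -I_\perp f$, this yields $\dprod{I_\perp f}{v_{k,k}} = \dprod{I_\perp f}{v_{-k,0}} = 0$ for all $k\ge 1$; since the frequency content of $\text{Range } I_\perp$ must match that of $P_+$, the image of $H^1_0(M)$ is then forced into the $-2i$ spectral region, on which $C_+$ vanishes. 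Without this lemma (or an equivalent orthogonality computation), your step (1) is an assertion, not a proof.
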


\begin{proof}[Proof of Proposition \ref{prop:Iperpdecomposition}]
    Write $g\in \dot{H}^1(M)$ as $g = g_{(0)} + g_{(+)} + g_{(-)}$ according to \ref{eq:H1}. We first analyze the summands $g_{(+)}$ and $g_{(-)}$. On to the summand $g_{(-)}$, we have $0 = \eta_- g_{(-)} = (X- iX_\perp)g_{(-)}$, so that, in particular, $X_\perp g_{(-)} = -i X g_{(-)}$, thus $I_\perp g_{(-)} (\beta,\alpha) = -i I[Xg_{(-)}] (\beta,\alpha)$. Using the fundamental theorem of calculus along each line of integration, this equals
  \begin{align}
    I_\perp g_{(-)} (\beta,\alpha) &= -i (g_{(-)} (e^{i(\beta+\pi+2\alpha)}) - g_{(-)} (e^{i\beta})) \nonumber\\
    &= -i \sum_{k=1}^\infty a_k (e^{ik(\beta + \pi + 2\alpha)} - e^{ik\beta}) = -i\pi\sqrt{2} \sum_{k=1}^\infty (-1)^k a_k v_{k,k},
    \label{eq:Iperpgminus}
  \end{align}
  so $I_\perp g_{(-)}$ is in the subspace of $\V_-$ corresponding to the spectral value $-i$ of $P_+$, and according to Proposition \ref{prop:SVDC}, $C_+ I_\perp g_{(-)} = \frac{-i}{2} I_\perp g_{(-)}$. \\
  Similary for $g_{(+)}$, we have $0 = \eta_+ g_{(+)} = (X + iX_\perp)g_{(+)}$, so that, in particular, $X_\perp g_{(+)} = i X g_{(+)}$, thus $I_\perp g_{(+)} (\beta,\alpha) = i I[Xg_{(+)}] (\beta,\alpha)$, which yields
  \begin{align}
    I_\perp g_{(+)} (\beta,\alpha) &= i (g_{(+)} (e^{-i(\beta+\pi+2\alpha)}) - g_{(+)} (e^{-i\beta})) \nonumber \\
    &= i \sum_{k=1}^\infty b_k (e^{-ik(\beta + \pi + 2\alpha)} - e^{-ik\beta}) = -i\pi\sqrt{2} \sum_{k=1}^\infty b_k v_{-k,0}, 
    \label{eq:Iperpgplus}
  \end{align}
  so $I_\perp g_{(+)}$ is in the subspace of $\V_-$ corresponding to the spectral value $i$ of $P_+$, and according to Proposition \ref{prop:SVDC}, $C_+ I_\perp g_{(+)} = \frac{i}{2} I_\perp g_{(+)}$.

  We conclude by showing that the range of $I_\perp$ restricted to $H^1_0(M)$ is orthogonal to both these subspaces. This fact mainly derives from the following lemma, whose proof we relegate at the end of this proof. 
  \begin{lemma}\label{lem:IperpH10}
    For any $f\in H^1_0(M)$, then $I_\perp f$ satisfies: 
    \begin{align*}
      \int_{-\frac{\pi}{2}}^\frac{\pi}{2} I_\perp f (\beta,\alpha) \ d\alpha = 0, \qquad \text{a.e. } \beta\in \Sm^1.
    \end{align*}
  \end{lemma}
  Assuming Lemma \ref{lem:IperpH10} is proved, it is easy to see that, if $f\in H^1_0(M)$, and using the fact that for any $k=1,2,\dots$, $v_{k,k} = -(Id - \SS_A^\star) \phi_{k,0}$ and $\SS_A^\star I_\perp f = -I_\perp f$, we arrive at,
  \begin{align*}
    \dprod{I_\perp f}{v_{k,k}} = 2 \dprod{I_\perp f}{\phi_{k,0}} = \int_{\Sm^1} e^{-ik\beta} \int_{-\frac{\pi}{2}}^\frac{\pi}{2} I_\perp f (\beta,\alpha) \ d\alpha\ d\beta = 0.
  \end{align*}
  Similarly using that $v_{-k,0} = (Id - \SS_A^\star)\phi_{-k,0}$, we arrive at
  \begin{align*}
    \dprod{I_\perp f}{v_{-k,0}} = 2 \dprod{I_\perp f}{\phi_{-k,0}} = \int_{\Sm^1} e^{ik\beta} \int_{-\frac{\pi}{2}}^\frac{\pi}{2} I_\perp f (\beta,\alpha) \ d\alpha\ d\beta = 0.
  \end{align*}
  Since the harmonic content of the ranges of $P_+$ and $I_\perp$ must match, the range of $I_\perp$ restricted to $H_0^1(M)$ cannot but be spanned by the space corresponding to the spectral value $-2i$ of $P_+$, on which $C_+$ vanishes identically. Proposition \ref{prop:Iperpdecomposition} is proved.
\end{proof}

\begin{proof}[Proof of Lemma \ref{lem:IperpH10}] It is enough to prove this for $f\in \C^\infty_c (M)$ and the result follows by density. We compute
    \begin{align*}
	\int_{-\frac{\pi}{2}}^\frac{\pi}{2} I_\perp f (\beta,\alpha) \ d\alpha &= \int_{-\frac{\pi}{2}}^\frac{\pi}{2} \int_0^{2\cos\alpha} X_\perp f (e^{i\beta} + t e^{i(\beta + \alpha + \pi)})\ dt\ d\alpha \\
	&= \int_{-\frac{\pi}{2}}^\frac{\pi}{2} \frac{\partial}{\partial\alpha} \left( \int_0^{2\cos\alpha} \frac{-1}{t} f (e^{i\beta} + t e^{i(\beta + \alpha + \pi)})\ dt \right)\ d\alpha,
    \end{align*}
    upon using the chain rule and the fact that $f|_{\partial M} =0$. Now since $f$ has compact support inside $M$, for $\alpha$ close enough to $\pm \frac{\pi}{2}$, $f$ is identically zero along the segment $\{e^{i\beta} + t e^{i (\beta + \alpha+\pi)}, \quad t\in (0,2\cos\alpha) \}$, so the last right-hand-side above is zero. Lemma \ref{lem:IperpH10} is proved.     
\end{proof}

\subsection{The ranges over $\ker^m \eta_\pm$} \label{sec:otherranges}

Let us first show how the characterization of the range of $I_0$ helps us understand the lack of injectivity over tensors. Fix $m \in\Zm$ and $f\in L^2(M)$ and suppose we want to compute $I[f(\x)e^{im\theta}] = I_m f$. Then we obtain: 
\begin{align}
   \begin{split}
       I [f(\x) e^{im\theta}](\beta,\alpha) &= \int_0^{2\cos\alpha} f(e^{i\beta} + te^{i(\beta+\pi+\alpha)}) e^{im(\beta+\pi+\alpha)}\ dt \\
       &= (-1)^m e^{im(\beta+\alpha)} I_0 f (\beta,\alpha). 
    \end{split}
    \label{eq:relation}
\end{align}
This implies that, for instance if $m=2$, the range of $I_2$ amounts to translating the range of $I_0$ in the $(p,q)$-plane by $(2,1)$, so that the overlap between the ranges of $I_0$ and $I_2$ is rather large. The next observation is that, in fact, the only part of the range of $I_2$ which does not lie in that of $I_0$ is precisely $I(\ker^2 \eta_-)$ ! 

We now formulate the range characterizations of $I(\ker^m \eta_\pm)$. To this end, if $\{u_k\}_{k=0}^\infty$ is a sequence of orthogonal vectors in a Hilbert space such that there exists $C\ge 1$ with $C^{-1} \le \|u_k\| \le C$ for every $k$, we define: 
\begin{align}
    h^{\frac{1}{2}} \left(\{u_k\}_{k=0}^\infty\right) := \left\{ v = \sum_{k=0}^\infty a_k \widehat{u_k}, \qquad \sum_{k=0}^\infty (k+1) |a_k|^2 <\infty \right\}.
    \label{eq:spandef}
\end{align}
\begin{proposition}\label{prop:rangesker} For any $m\in \Zm$, we have
    \begin{align}
                  I(\ker^{2m} \eta_-) &= h^{\frac{1}{2}} \left(\{u'_{2m+k,m+k}\}_{k=0}^\infty\right), \label{eq:rangeeven1} \\
                   I(\ker^{2m} \eta_+) &= h^{\frac{1}{2}} \left(\{u'_{2m-k,m}\}_{k=0}^\infty\right), \label{eq:rangeeven2} \\
                  I(\ker^{2m+1} \eta_-) &= h^{\frac{1}{2}} \left(\{v_{2m+1+k,m+k+1}\}_{k=0}^\infty\right), \label{eq:rangeodd1} \\
                  I(\ker^{2m+1} \eta_+) &= h^{\frac{1}{2}} \left(\{v_{2m+1-k,m+1}\}_{k=0}^\infty\right). \label{eq:rangeodd2}
    \end{align}
\end{proposition}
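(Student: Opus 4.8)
The plan is to reduce the whole proposition to the explicit action of $I_0$ on the monomial bases of the Bergman and anti-Bergman spaces. Since $\{z^n\}_{n\ge 0}$ is an orthogonal basis of the holomorphic $L^2$ functions on $M$ and $\{\zbar^n\}_{n\ge 0}$ an orthogonal basis of the antiholomorphic ones, it suffices to compute $I$ on the generators $z^n e^{im\theta}\in \ker^m\eta_-$ and $\zbar^n e^{im\theta}\in\ker^m\eta_+$, and then invoke \ref{eq:relation} to write $I[z^n e^{im\theta}] = (-1)^m e^{im(\beta+\alpha)} I_0[z^n]$ (and analogously for $\zbar^n$). Thus everything rests on two elementary monomial computations for $I_0$ followed by bookkeeping in the bases of Section \ref{sec:bases}.

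For the core computation I would parametrize the chord through $(\beta,\alpha)$ as $z(t) = e^{i\beta}(1 - t e^{i\alpha})$, $t\in[0,2\cos\alpha]$, whose endpoints are the boundary points $e^{i\beta}$ and $e^{i(\beta+2\alpha)}$. A direct integration of $z(t)^n$ then gives
\begin{align*}
 I_0[z^n] = \frac{\pi\sqrt 2}{n+1}\left(\phi'_{n,-1} + (-1)^n \phi'_{n,n}\right), \qquad I_0[\zbar^n] = \frac{\pi\sqrt 2}{n+1}\left(\phi'_{-n,0} + (-1)^n\phi'_{-n,-n-1}\right),
\end{align*}
the second by complex conjugation. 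The crucial point is that each answer is already a single basis vector: comparing with the definition of $u'_{p,q}$ in \ref{eq:bases} one reads off $I_0[z^n] = \frac{\pi\sqrt 2}{n+1}u'_{n,-1}$ and $I_0[\zbar^n] = \frac{\pi\sqrt 2}{n+1}u'_{-n,0}$. Multiplying by $(-1)^m e^{im(\beta+\alpha)}$ effects the index shift $\phi'_{p,l}\mapsto \phi'_{p+2m,\,l+m}$ when the harmonic $m$ is even (staying in the $u'$ family) and $\phi'_{p,l}\mapsto \phi_{p+m,\,l+\frac{m+1}{2}}$ converting $\phi'$ into $\phi$ when $m$ is odd (landing in the $v$ family). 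After applying the redundancies $u'_{p,p-q-1} = (-1)^p u'_{p,q}$ and $v_{p,p-q} = -(-1)^p v_{p,q}$ to reach the reduced ranges of \ref{eq:bases}, one obtains exactly the single vectors $u'_{2m\pm k,\cdots}$ and $v_{(2m+1)\pm k,\cdots}$ listed in \ref{eq:rangeeven1}--\ref{eq:rangeodd2}, each carrying a nonzero coefficient proportional to $\frac{1}{k+1}$.

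The remaining task is to match the function space. Using $\|z^n\|_{L^2(M)}^2 = \int_M |z|^{2n}\,dA = \frac{\pi}{n+1}$ (and the identical value for $\zbar^n$), a holomorphic $f = \sum_n c_n z^n$ lies in $L^2(M)$ iff $\sum_n \frac{|c_n|^2}{n+1} < \infty$. Since the images $\{u'_{2m+k,m+k}\}_k$ are pairwise orthogonal of norm $\sqrt 2$, writing $I[f\,e^{2im\theta}] = \sum_k a_k \widehat{u'_{2m+k,m+k}}$ makes $a_k$ a fixed multiple of $\frac{c_k}{k+1}$, whence $\sum_k (k+1)|a_k|^2$ is a fixed multiple of $\sum_k \frac{|c_k|^2}{k+1} = \frac{1}{\pi}\|f\|_{L^2(M)}^2$. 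This is precisely the $h^{\frac12}$ condition of \ref{eq:spandef}, and the matching constants furnish a two-sided bound, so $I$ maps the Bergman space bijectively and bicontinuously onto $h^{\frac12}(\{u'_{2m+k,m+k}\})$, proving \ref{eq:rangeeven1}; the three other cases are identical.

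The main obstacle here is not analytic but organizational: one must carry the index algebra through all four parity/sign combinations, apply the correct redundancy relation each time to land in the reduced $(p,q)$ range, and confirm that the single weight $\frac{1}{n+1}$ produced by integration is exactly the one converting the Bergman norm into the $h^{\frac12}$ norm. Once the monomial formula and the Bergman-norm identity are in hand, the orthogonality of the images across $k$ makes the passage from monomials to general $f$ immediate, so no convergence subtleties intervene.
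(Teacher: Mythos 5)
Your proposal is correct and follows essentially the same route as the paper: your monomial computation is exactly the paper's power-series calculation \ref{eq:I0holo} taken term by term (your $u'_{n,-1}$ equals the paper's $(-1)^n u'_{n,n}$ by the stated redundancy), followed by the same use of the intertwining relation \ref{eq:relation} to shift indices into the $u'$ and $v$ families, and the same matching of the Bergman weight $\frac{1}{k+1}$ against the $h^{\frac{1}{2}}$ condition of \ref{eq:spandef}. The only difference is presentational: you work generator by generator and make the two-sided norm bound explicit, where the paper sums the series at once and leaves that step as immediate.
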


\begin{proof}[Proof of Proposition \ref{prop:rangesker}] {\bf Study of $\ker^0 \eta_\pm$.} Suppose $f\in L^2(M)$ is a holomorphic function, i.e. $f\in \ker^0 \eta_- = \ker \dbar$. Then we may write $f$ as $f(z) = \sum_{k=0}^\infty a_k z^k$ for some numbers $\{a_k\}_k$ such that 
    \begin{align*}
	\sum_{k=0}^\infty \frac{|a_k|^2}{k+1} = \frac{1}{\pi} \|f\|^2_{L^2(M)} < \infty. 
    \end{align*}
    Then for any $(\beta,\alpha)\in \partial_+ SM$, 
    \begin{align}
	I_0 f(\beta,\alpha) &= \int_0^{2\cos\alpha} f(e^{i\beta}(1-te^{i\alpha}))\ dt \nonumber \\
	&= \sum_{k=0}^\infty (-1)^k a_k e^{ik(\beta+\alpha)}  \int_0^{2\cos\alpha} (t-e^{-i\alpha})^k\ dt \nonumber \\
	&= \sum_{k=0}^\infty \frac{(-1)^k a_k}{k+1} e^{ik\beta} (e^{i(2k+1)\alpha}+ (-1)^ke^{-i\alpha}) \nonumber \\
	&= \pi\sqrt{2} \sum_{k=0}^\infty \frac{(-1)^k a_k}{k+1} u'_{k,k}. \label{eq:I0holo}
    \end{align}
    
    This implies straightforwardly that 
    \begin{align*}
	I(\ker^0 \eta_-) = \left\{ \sum_{k=0}^\infty b_k u'_{k,k}, \qquad \sum_{k=0}^\infty (k+1) |b_k|^2 < \infty \right\} = h^{\frac{1}{2}} \left(\{u'_{k,k}\}_{k=0}^\infty\right).
    \end{align*}
    
    If $f\in L^2(M)$ is antiholomorphic, of the form $f(z) = \sum_{k=0}^\infty a_k \zbar^k$, then $\overline{f}$ is holomorphic and we can compute, using \ref{eq:I0holo},
    \begin{align*}
	I_0 f = \overline{I_0 \bar{f}} = \overline{\pi\sqrt{2} \sum_{k=0}^\infty \frac{(-1)^k \overline{a_k}}{k+1} u'_{k,k}} = \pi\sqrt{2} \sum_{k=0}^\infty \frac{(-1)^k a_k}{k+1} \overline{u'_{k,k}} = \pi\sqrt{2} \sum_{k=0}^\infty \frac{a_k}{k+1} u'_{-k,0},
    \end{align*}
    where we have used \ref{eq:conj} in the last equality. We therefore deduce that
    \begin{align*}
	I(\ker^0 \eta_+) = \left\{ \sum_{k=0}^\infty b_k u'_{-k,0}, \qquad \sum_{k=0}^\infty (k+1) |b_k|^2 < \infty \right\} = h^{\frac{1}{2}}\left(\{u'_{-k,0}\}_{k=0}^\infty\right).
    \end{align*}

    {\bf Study of $\ker^m \eta_\pm$ for $m\ne 0$.} From \ref{eq:relation}, recall that 
    \begin{align*}
	I[f(\x)e^{im\theta}] (\beta,\alpha) = (-1)^m e^{im(\beta+\alpha)} I_0 f (\beta,\alpha), 
    \end{align*} 
    and $f(\x) e^{im \theta}\in \ker^{m}\eta_\pm$ if and only if $f\in \ker^0 \eta_\pm$ so that multiplication by $e^{im(\beta+\alpha)}$ is an isomorphism between $I(\ker^0 \eta_\pm)$ and $I(\ker^{m}\eta_\pm)$. In terms of our bases $\B$ and $\B'$, this is framed as follows:
    \begin{description}
	\item[Even case:] we have that $I[f(\x) e^{2im \theta}] = \phi_{2m,m} I_0 f$, and using the fact that $\phi_{2m,m} u'_{p,q} = u'_{p+2m, q+m}$ for any $p,q,m$, we deduce \ref{eq:rangeeven1} and \ref{eq:rangeeven2}. 
	\item[Odd case:] we have that $I[f(\x) e^{i(2m+1) \theta}] = - \phi'_{2m+1,m} I_0 f$, and using the fact that $\phi'_{2m+1,m} u'_{p,q} = v_{2m+1+p,m+q+1}$ for any $p,q,m$, we deduce \ref{eq:rangeodd1} and \ref{eq:rangeodd2}. 
    \end{description} 
    The proof of Proposition \ref{prop:rangesker} is complete. 
\end{proof}

\section{Reconstruction formulas - Proof of Theorem \ref{thm:recons}} \label{sec:recons}

In light of Theorems \ref{thm:decomp} and \ref{thm:range}, every $m$-tensor $f$ has an equivalent $m$-tensor $g$ such that $If = Ig$ and such that the ray transforms of each component of $g$ live on orthogonal subspaces of $L^2(\partial_+ SM)$. We now explain how to reconstruct each component. Section \ref{sec:I0Iperp} covers the inversion of $I_0$ and $I_\perp$, while Section \ref{sec:other} covers the reconstruction of elements in any space $\ker^{m} \eta_\pm$ for $m\in \Zm$. 

\subsection{Inversion of $I_0$ and $I_\perp$} \label{sec:I0Iperp}

Inversion formulas for $I_0$ were long known in the parallel geometry (see \cite{Radon1917,Natterer2001}), and reconstruction formulas in fan-beam geometry were obtained by changing variable in the former inversion \cite{Herman1976}. Seen in a context of transport equations on Riemannian surfaces, the first inversion formulas for the operators $I_0$ and $I_\perp$ (restricted to smooth functions with compact support) appeared in \cite{Pestov2004}, though not being represented using the $A_+^\star H A_-$ operator as we presently do. As explained earlier, adding this operator allows to project data onto the ranges of $I_0$ and $I_\perp$. For a proof of the inversion formulas justifying the use of the operator $A_+^\star H A_-$, see \cite[Proposition 2.2]{Monard2015} (note that the error operators $W,W^\star$ appearing there account for curvature and vanish identically in the present Euclidean setting). Above, we have defined the backprojection operators as follows: for $\D (\beta,\alpha)$ defined on $\partial_+ SM$ and $\x\in M$, 
\begin{align}
  \begin{split}
    I_0^\sharp g(\x) &:= \frac{1}{2\pi} \int_{\Sm^1} \D (\theta+\sin^{-1} (\x\cdot\btheta^\perp), - \sin^{-1} (\x\cdot\btheta^\perp))\ d\theta, \qquad \left(\btheta^\perp := \binom{-\sin\theta}{\cos\theta}\right) \\
    I_\perp^\sharp g(\x) &:= \frac{1}{2\pi} \nabla\cdot \left( \int_{\Sm^1} \btheta^\perp\ \D (\theta+\sin^{-1} (\x\cdot\btheta^\perp), - \sin^{-1} (\x\cdot\btheta^\perp))\ d\theta \right).    
  \end{split}
  \label{eq:backproj}
\end{align}

\begin{remark}
  Although we favored the unweighted space $L^2(\partial_+ SM)$ for reasons of range description here, it is to be noted that when deriving reconstruction formulas, the operators $I_0^\sharp$ and $I_\perp^\sharp$ appear naturally and they are the adjoints of $I_0$ and $I_\perp$ when considering the space $L^2(\partial_+ SM, \cos\alpha)$ as their codomain, instead of the unweighted one.   
\end{remark}

\noindent{\bf Inversion of $I_\perp$ over $\dot{H}^1(M)$.} As the previous section only covered the case of function vanishing at the boundary, we must now consider the inversion over the decomposition $g_{(0)} + g_{(+)} + g_{(-)}$ as in Section \ref{sec:Iperp}. 

It turns out that applying the inversion formula above to $I_\perp (g_{(0)} + g_{(+)} + g_{(-)})$ does not only pick up $g_{(0)}$ but also a {\em fraction} of $g_{(+)}$ and $g_{(-)}$. In order to remove this coupling, we derive independent reconstruction formulas for $g_{(\pm)}$ exploiting their analyticity, and we introduce an additional boundary operator which will remove $I_\perp (g_{(+)} + g_-)$ from $I_\perp g$ to reconstruct $g_{(0)}$ separately.

\begin{proof}[Proof of Equations \ref{eq:reconsgminus} and \ref{eq:reconsgplus}]
    Call $\D =  I_\perp (g_{(0)} + g_{(+)} + g_{(-)})$ and let us reconstruct the functions $g_{(\pm)}$ from $\D$. Using the calculation \ref{eq:Iperpgminus}, we have
  \begin{align*}
      g_{(-)}(z) = \sum_{k=1}^\infty a_k z^k &= \frac{i}{\pi\sqrt{2}} \sum_{k=1}^\infty (-1)^k  z^k \frac{\dprod{\D}{v_{k,k}}}{\dprod{v_{k,k}}{v_{k,k}}}\\
    & = \frac{i}{4\pi^2} \int_{\partial_+ SM} \D(\beta,\alpha) \sum_{k=1}^\infty\left( (-z e^{-i(\beta+2\alpha)})^k - (ze^{-i\beta})^k \right) \ d\alpha\ d\beta \\
    &= \frac{i}{4\pi^2} \int_{\partial_+ SM} \D(\beta,\alpha) \left( \frac{-z e^{-i(\beta+2\alpha)}}{1+z e^{-i(\beta+2\alpha)}} - \frac{ze^{-i\beta}}{1-ze^{-i\beta}} \right) \ d\alpha\ d\beta \\
    &= \frac{1}{2i\pi^2} \int_{\partial_+ SM} \D(\beta,\alpha) \frac{Id - \SS_A^\star}{2} \frac{ze^{-i\beta}}{1-ze^{-i\beta}}\ d\alpha\ d\beta.
  \end{align*}
  If the data is consistent in the sense that $\frac{Id - \SS^\star_A}{2}\D = \D$, then we can simplify this formula into \ref{eq:reconsgminus}, hence the formula. 
  
  On to the function $g_{(+)}$, using the fact that $I_\perp \overline{g_{(+)}} = \overline{I_\perp g_{(+)}}$ and the fact that $\overline{g_{(+)}}$ is holomorphic, we can use the previous formula to arrive at \ref{eq:reconsgplus}. 
\end{proof}

\begin{proof}[Proof of Equation \ref{eq:reconsgzero}] We suppose the following formula (see e.g. \cite[Proposition 2.2]{Monard2015}) to hold for functions on $H^1_0(M)$: 
    \begin{align*}
	g_{(0)} = \frac{-1}{8\pi} I_0^\sharp A_+^\star H A_- I_\perp g_{(0)}. 
    \end{align*}
    Calling $\D = I_\perp (g_{(0)} + g_{(+)} + g_{(-)})$, all we need to prove is that $(Id + (A_-^\star H A_{-})^2) \D = I_\perp g_{(0)}$, or in other words, that 
    \begin{align*}
	\left(A_-^\star H A_- \right)^2 \D = (2C_+)^2 \D = - I_\perp (g_{(+)} + g_{(-)}), 
    \end{align*}
    where $C_+$ is defined in Section \ref{sec:scatrel}. This is an immediate consequence of Proposition \ref{prop:Iperpdecomposition}. 
\end{proof}

\subsection{Reconstruction of elements of $\ker^m \eta_\pm$ for $m\in \Zm$} \label{sec:other}
The present section provides justifications for equations \ref{eq:reconsholo1}, \ref{eq:reconsholo2}, \ref{eq:reconsholo3}  and \ref{eq:reconsholo4} in Theorem \ref{thm:recons}.

\paragraph{The case $m=0$ of holomorphic and antiholomorphic functions:}

From the calculation \ref{eq:I0holo}, we can derive a reconstruction formula for $f$: since the $u'_{p,q}$'s are orthogonal, equation \ref{eq:I0holo} implies that the reconstruction of each coeffient is given by
\[ a_k = \frac{(-1)^k (k+1)}{\pi\sqrt{2}} \frac{\dprod{I_0 f}{u'_{k,k}}}{\dprod{u'_{k,k}}{u'_{k,k}}} = \frac{(-1)^k (k+1)}{2\pi\sqrt{2}} \dprod{I_0 f}{u'_{k,k}}, \qquad k\ge 0.   \]
We can also derive an integral reconstruction for $f$ thanks to the following computation, valid for $|z|<1$,
\begin{align*}
    f(z) &= \sum_{k\ge 0} a_k z^k \\
    &= \frac{1}{2\pi\sqrt{2}} \sum_{k\ge 0}  (-1)^k (k+1) \dprod{I_0 f}{u'_{k,k}} z^k \\
    &= \frac{1}{4\pi^2} \left\langle I_0 f,\ \sum_{k\ge 0} (-1)^k (k+1) z^k (e^{-i\alpha} e^{-ik(\beta+2\alpha)} + (-1)^k e^{-ik\beta} e^{i\alpha})\right\rangle \\
    &= \frac{1}{2\pi^2} \int_{\partial_+ SM} I_0 f(\beta,\alpha) \frac{1}{2}\left( \frac{e^{-i\alpha}}{(1+ze^{-i(\beta+2\alpha)})^2} + \frac{e^{i\alpha}}{(1-ze^{-i\beta})^2}\right)\ d\beta\ d\alpha,
\end{align*}
where we have used the power series $(1-\zeta)^{-2} = \sum_{k\ge 0} (k+1)\zeta^k$, true for any $|\zeta|<1$. Upon defining $G_0(z;\beta,\alpha) := \frac{e^{i\alpha}}{(1-ze^{-i\beta})^2}$, the formula can be summarized as
\begin{align*}
  f(z) = \frac{1}{2\pi^2} \int_{\frac{-\pi}{2}}^{\frac{\pi}{2}}\int_{\Sm^1} I_0 f(\beta,\alpha) \frac{1}{2}\left( G_0(z;\beta,\alpha) + G_0(z;\SS_A(\beta,\alpha)) \right)\ d\beta\ d\alpha.
\end{align*}
By symmetry, the half-sum inside the integral can be replaced by either term in the sum. On the other hand, the half-sum can be useful when reconstructing the function and projecting data on the appropriate range at the same time. If one only keeps the second term for instance, the inversion becomes extremely fast to implement: 
\begin{align}
    f(z) = \frac{1}{2\pi^2}\int_{\Sm^1} \frac{1}{(1-ze^{-i\beta})^2} \int_{-\frac{\pi}{2}}^{\frac{\pi}{2}} I_0 f(\beta,\alpha) e^{i\alpha} \ d\alpha\ d\beta, \qquad f\in \ker^0 \eta_-.
    \label{eq:holoInversion}
\end{align}

Now if $f$ is antiholomorphic, then $\bar f$ is holomorphic and since $I_0 \bar f = \overline{I_0 f}$, we can deduce a reconstruction procedure for an antiholomorphic $f$ as well: applying \ref{eq:holoInversion} to $\bar f$ from $\overline{I_0 f}$ and taking complex conjugates. This yields
\begin{align}
    f(z) = \frac{1}{2\pi^2}\int_{\Sm^1} \frac{1}{(1-\bar{z} e^{i\beta})^2} \int_{-\frac{\pi}{2}}^{\frac{\pi}{2}} I_0 f(\beta,\alpha) e^{-i\alpha} \ d\alpha\ d\beta, \qquad f\in \ker^0 \eta_+.
    \label{eq:antiholoInversion}
\end{align}

\paragraph{Reconstruction of elements in $\ker^m \eta_\pm$ for $m\ne 0$.}
Let $h\in \ker^m \eta_-$, of the form $h(\x,\theta) = f(\x) e^{im\theta}$ with $\dbar f = 0$. Using \ref{eq:relation}, we can deduce immediately that 
\[ I_0 f (\beta,\alpha) = (-1)^m e^{-im(\beta+\alpha)} I[f(\x) e^{im\theta}] (\beta,\alpha). \]
Combining this with \ref{eq:holoInversion}, we arrive at
\begin{align*}
    f(z) &= \frac{1}{2\pi^2}\int_{\Sm^1} \frac{1}{(1-ze^{-i\beta})^2} \int_{-\frac{\pi}{2}}^{\frac{\pi}{2}} I_0 f(\beta,\alpha) e^{i\alpha} \ d\alpha\ d\beta \\
    &= \frac{(-1)^m}{2\pi^2}\int_{\Sm^1} \frac{e^{-im\beta}}{(1-ze^{-i\beta})^2} \int_{-\frac{\pi}{2}}^{\frac{\pi}{2}} I[f e^{im\theta}](\beta,\alpha) e^{i(1-m)\alpha} \ d\alpha\ d\beta,
\end{align*}
which is what we had to prove. 

If $h\in \ker^m \eta_+$, the proof of the reconstruction formula is a similar combination of \ref{eq:relation} and \ref{eq:antiholoInversion}. 

\section{On the possibility of other decompositions}\label{sec:decomps}

Let us restrict the present discussion to the case of even tensors, though the case of odd tensors is similar. The decomposition described in Theorem \ref{thm:decomp} is based on a particular choice of ``main harmonic'' $g_0$. However, for a $2n$-tensor $f$ with data $If$, one could very well pick any integer $-n\le k\le n$ and construct a $2n$-tensor $g$ with $If = Ig$ such that, 
\begin{itemize}
    \item For $-n\le \ell < k$, $g_{2\ell}\ e^{i2\ell\theta} \in \ker^{2\ell} \eta_+$, reconstructible from $If$ using the formulas derived in the present article. 
    \item For $k<\ell \le n$, $g_{2\ell}\ e^{i2\ell\theta} \in \ker^{2\ell} \eta_-$, reconstructible from $If$ using the formulas derived in the present article. 
    \item The ``main harmonic'' is of the form $e^{2ik\theta} g_{2k} (\x)$ with $g_{2k} \in L^2(M)$, and is to be reconstructed from the transform $I [e^{2ik\theta} g_{2k}]$, for which the author has given inversion formulas in \cite[Theorem 5.2]{Monard2013a}. These formulas are similar in spirit to the case $k=0$ except that they involve conjugations of the Hilbert transform by factors $e^{\pm i2k\theta}$. 
\end{itemize}

Such decompositions are also continous in the sense that $\|g\|_{L^2(SM)} \le C \|f\|_{L^2(SM)}$ and enjoy the same efficiency in implementation. The main difference with the case $k=0$ is that the harmonic $g_{2k}$ would now be the one which contains all visible singularities. Figure \ref{fig:decomps} illustrates, on a 2-tensor reconstruction problem, two different ways in which to view the frequency content of the transform of a 2-tensor, to reconstruct two different candidates. 

\begin{figure}[htpb]
    \centering
    \includegraphics[width=0.49\textwidth]{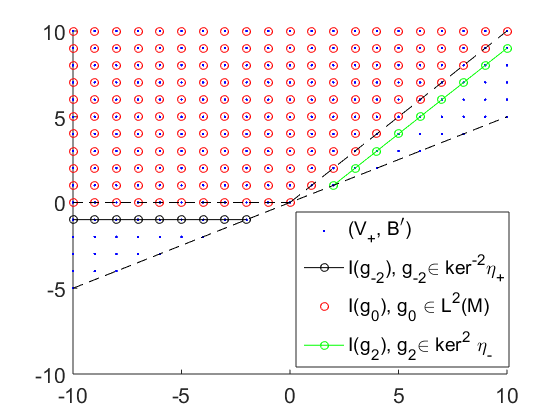}
    \includegraphics[width=0.49\textwidth]{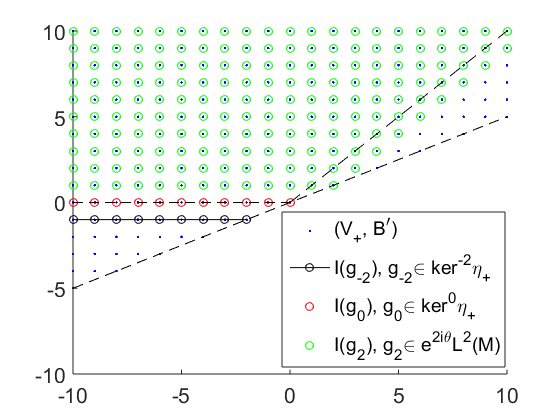}    
    \caption{Two possible ways to view the frequency content of the transform of a $2$-tensor, leading to two different candidates $g = g_{-2}\ e^{-i2\theta} + g_0 + g_2\ e^{i2\theta}$. }
    \label{fig:decomps}
\end{figure}

\section{Numerical experiments} \label{sec:numerics}

We now show some numerical illustrations of the reconstruction formulas, one for a tensor of even order, one for a tensor of odd order, using the {\tt Matlab} code previously documented by the author in \cite{Monard2013} in the context of Riemannian surfaces. The underlying grid is $300\times 300$ cartesian, the discretization of the data space $\Sm^1\times (-\frac{\pi}{2},\frac{\pi}{2})$ is $600\times 300$ equispaced. All computations terminate within seconds on a regular personal computer.

\subsection{Experiment 1: Reconstruction of a second order tensor}
We first illustrate Theorem \ref{thm:recons} with an example of a second-order tensor $f = f_0 + f_2 e^{2i\theta} + f_{-2} e^{-2i\theta}$. For simplicity of display, we make $f$ real-valued by imposing the constraints $\overline{f_0} = f_0$ and $\overline{f_{2}} = f_{-2}$. In this case, if we write $f_2 = f_{2}^r + i f_2^i$, the tensor $f$ takes the form
\begin{align*}
  f = f_0 + 2 f_2^r \cos(2\theta) - 2 f_2^i \sin(2\theta).
\end{align*}
In tensor notation, this corresponds to the symmetric 2-tensor
\begin{align}
  f = (f_0 + 2 f_2^r)\ dx \otimes dx - 4 f_2^i\ \sigma(dx \otimes dy) + (f_0 - 2 f_2^r)\ dy \otimes dy.
  \label{eq:fex1}
\end{align}
The functions $f_0$, $f_2^r$, $f_2^i$ are given in Fig. \ref{fig:f}. As prescribed in Theorem \ref{thm:recons}, we reconstruct the equivalent tensor $g = g_0 + g_2 + g_{-2}$ according to the formulas there. One should note in particular that since the data is real-valued, the reconstructed $g$ satisfies $\overline{g_0} = g_0$ and $\overline{g_2} = g_{-2}$, so that we may represent $g$ just like $f$, that is, in terms of functions $g_0$, $g_2^r$ and $g_2^i$ as in \ref{eq:fex1}. These three functions are represented in Fig. \ref{fig:g}.

The forward data $If$, as well as the pointwise difference $|If-Ig|$ are given on Fig. \ref{fig:errorI}.

\begin{figure}[htpb]
  \centering
  \includegraphics[trim = 20 0 20 60, clip, width=.3\textwidth]{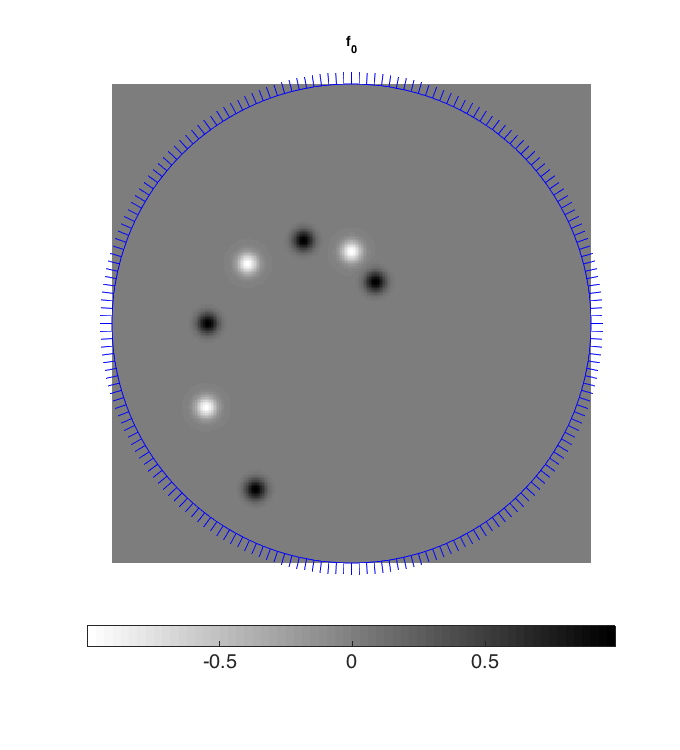}
  \includegraphics[trim = 20 0 20 60, clip, width=.3\textwidth]{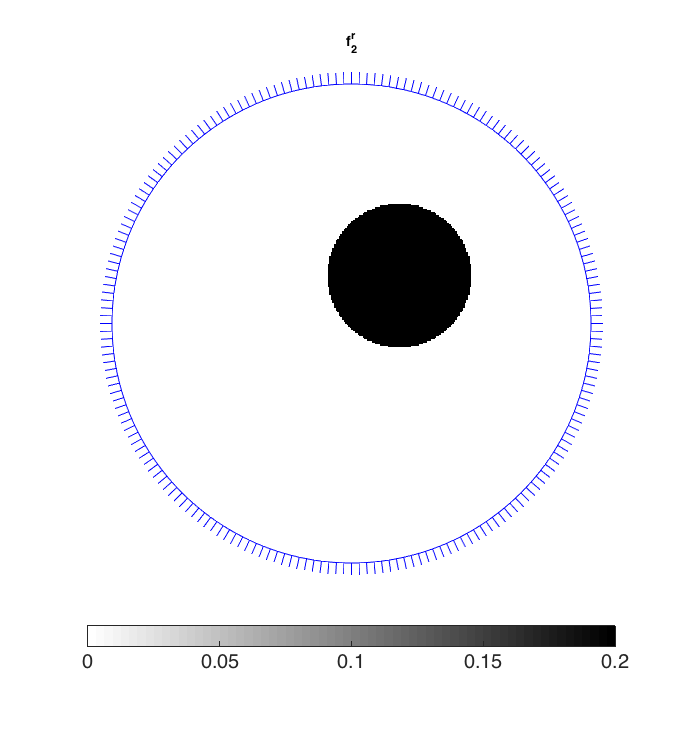}
  \includegraphics[trim = 20 0 20 60, clip, width=.3\textwidth]{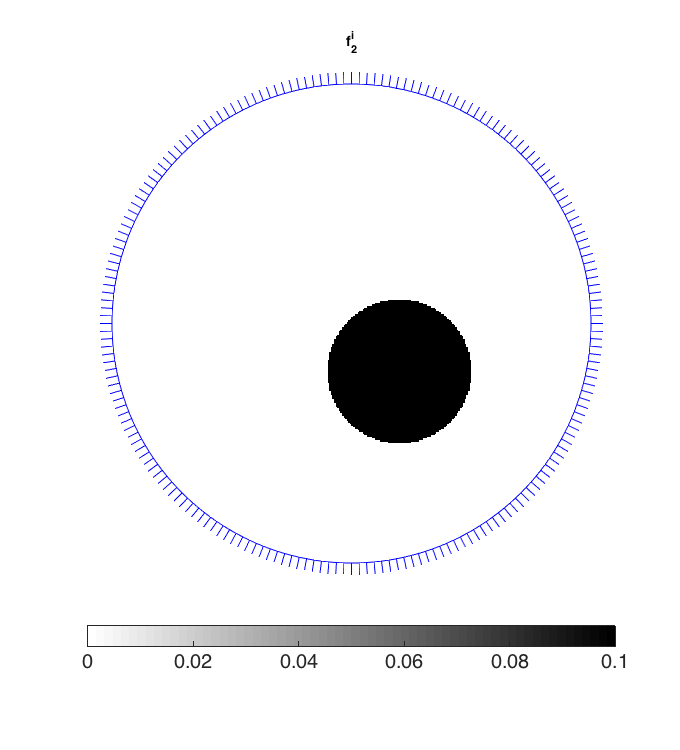}
  \caption{Exp. 1: Second order tensor $f$ as defined in \ref{eq:fex1} via three real-valued functions $f_0$ (left), $f_2^r$ (middle) and $f_2^i$ (right).}
  \label{fig:f}
\end{figure}

\begin{figure}[htpb]
  \centering
  \includegraphics[trim = 20 0 20 60, clip, width=.3\textwidth]{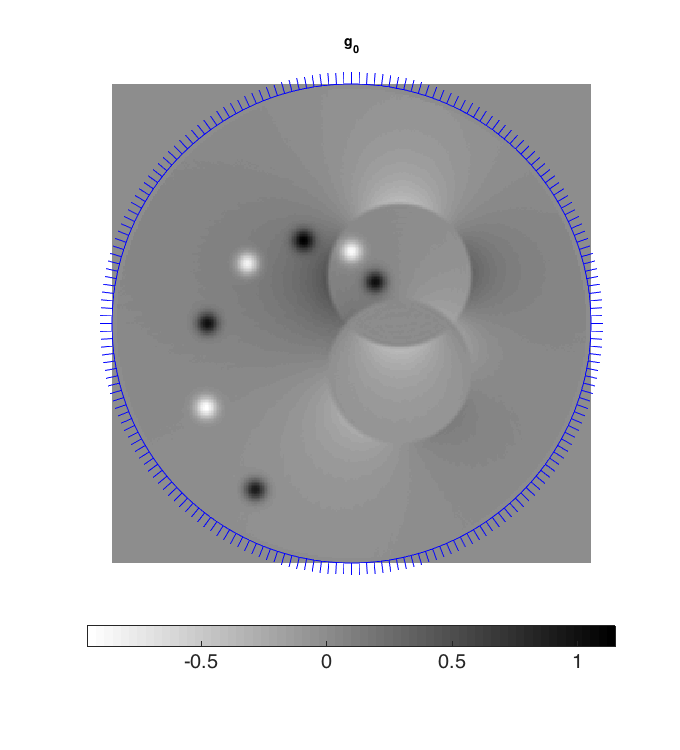}
  \includegraphics[trim = 20 0 20 60, clip, width=.3\textwidth]{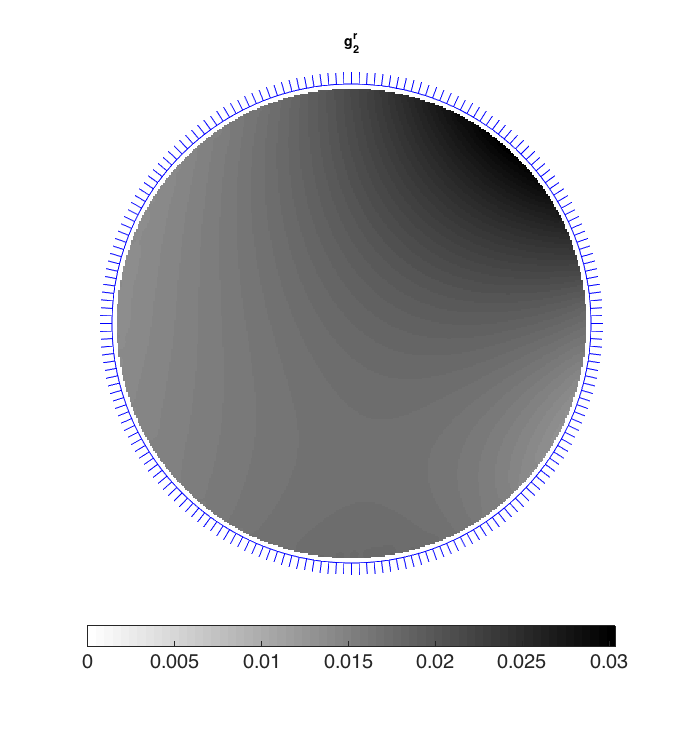}
  \includegraphics[trim = 20 0 20 60, clip, width=.3\textwidth]{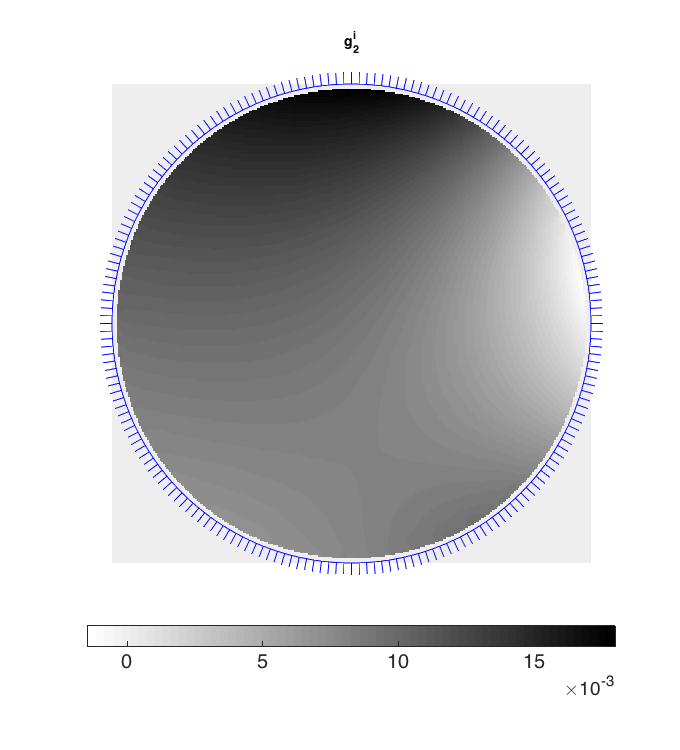}
  \caption{Exp. 1: Second order tensor $g$ reconstructed from the data $If$, given by three real-valued functions $g_0$ (left), $g_2^r$ (middle) and $g_2^i$ (right).}
  \label{fig:g}
\end{figure}

\begin{figure}[htpb]
  \centering
  \includegraphics[trim = 20 0 20 60, clip, width=.45\textwidth]{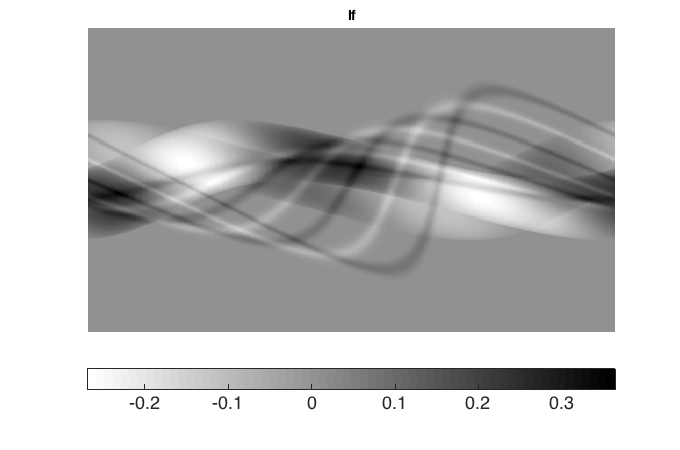}
  \includegraphics[trim = 20 0 20 60, clip, width=.45\textwidth]{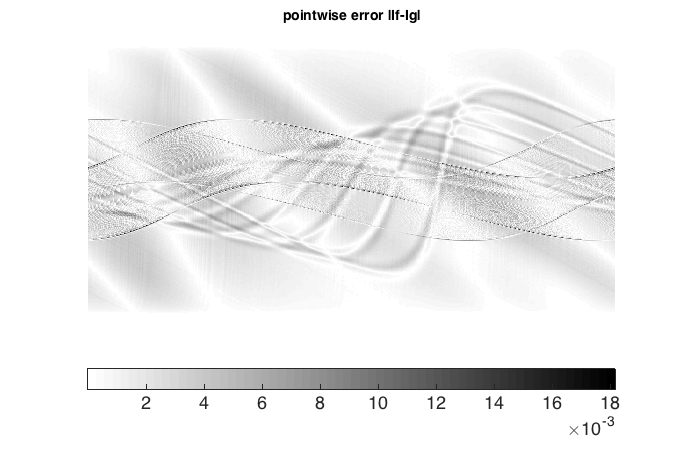}  
  \caption{Exp. 1: Left: ray transform $If$. Right: pointwise difference $|If-Ig|$, where $g$ is the tensor reconstructed from data $If$, displayed on Fig. \ref{fig:g}.}
  \label{fig:errorI}
\end{figure}

Some comments are in order: 
\begin{enumerate}
  \item Even if the tensor has compact support (which is the case here), the reconstructed tensor may not have compact support.
  \item The inability to separate the singularities of $f_0, f_2, f_{-2}$ in the reconstruction is not a weakness of the method: the lack of injectivity implies the loss of such an information. As seen in section \ref{sec:decomps}, one could choose to reconstruct another $h = h_{-2} + h_0 + h_2$ with $h_{-2} \in \ker^{-2} \eta_+$, $h_0 \in \ker^0 \eta_+$ and $h_2 \in $ containing all reconstructed singularities, and this would make another perfectly acceptable candidate. 
\end{enumerate}

\subsection{Experiment 2: Reconstruction of a solenoidal vector field}

In the case of tensors of odd order, there is an additional numerical technicality: as the Cauchy integral type of inversion formulas may become unstable too close to the boundary, we need to cut off the reconstruction near the boundary. This in turn would create artifacts when recomputing $I_\perp$ of the reconstructed function and comparing it to the initial data, which makes this comparison method unreliable. 

As an alternative, we first show that the inversion procedure of $I_\perp$ over $\dot{H}^1(M)$ is successful almost up to the boundary by comparing directly the reconstruction with the initial function (we can do that because this problem is injective). Once this is done, the next section will give an example of a reconstruction on an example of a third-order tensor. 

In this example we take $f = f_{(0)} + f_\partial$ as the sum of a compactly supported function $f_{(0)}$ (sum of peaked gaussians) and a non-compactly supported harmonic term $f_\partial = \Re(z^3)$, as in Figure \ref{fig:solvf1} (left). The forward data $I_\perp f$ is on the right of Figure \ref{fig:solvf1}. We first apply $Id + (A_-^\star H A_-)^2$ to the data to extract $I_\perp f_{(0)}$ (Fig. \ref{fig:solvf2}, left) and reconstruct $f_{(0)}$ (see Fig. \ref{fig:solvf3}, left) from it. For convenience, $I_\perp f_\partial$, extracted from the data, is visualized Fig. \ref{fig:solvf2} (right). We then apply formulas \ref{eq:reconsgminus}-\ref{eq:reconsgplus} to the data to reconstruct $f_\partial$ (see Fig. \ref{fig:solvf3}, middle). The pointwise error inside the centered disk of radius $0.99$ is given on Fig. \ref{fig:solvf3} (right). 

\begin{figure}[htpb]
    \centering
    \includegraphics[trim = 20 0 20 50, clip, width=.3\textwidth]{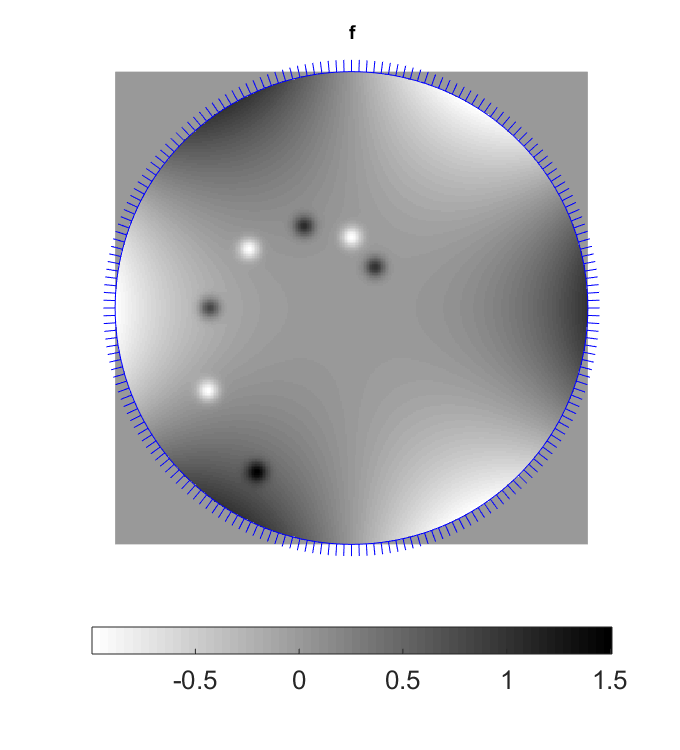}
    \includegraphics[trim = 20 0 20 20, clip, width=.45\textwidth]{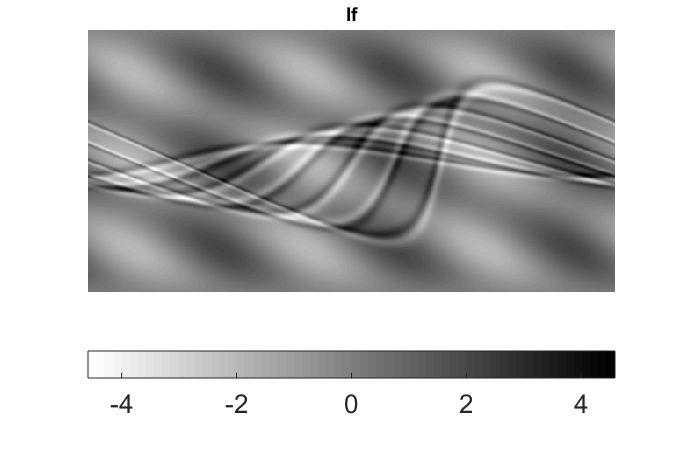}
    \caption{Exp. 2: Left: function $f$ generating the solenoidal vector field $X_\perp f$. Right: the data $I_\perp f$.}
    \label{fig:solvf1}
\end{figure}

\begin{figure}[htpb]
    \centering
    \includegraphics[trim = 20 0 20 25, clip, width=.45\textwidth]{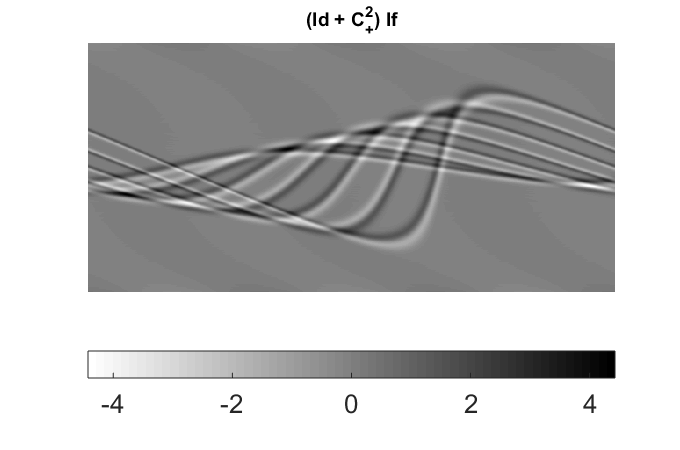}
    \includegraphics[trim = 20 0 20 25, clip, width=.45\textwidth]{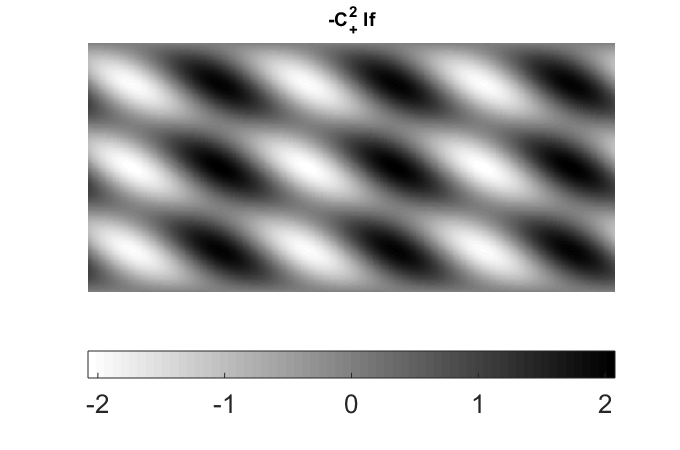}    
    \caption{Exp. 2: Decomposition of $I_\perp (f_{(0)}+f_\partial)$ into $I_\perp(f_{(0)})$ (left) via the operator $Id + (A_-^\star H A_-)^2$, and $I_\perp (f_\partial)$ (right) via the operator $-(A_-^\star H A_-)^2$. The sum of both gives back the initial data $I_\perp f$ displayed on Fig. \ref{fig:solvf1} (right).}
    \label{fig:solvf2}
\end{figure}

\begin{figure}[htpb]
    \centering
    \includegraphics[trim = 20 0 20 50, clip, width=.3\textwidth]{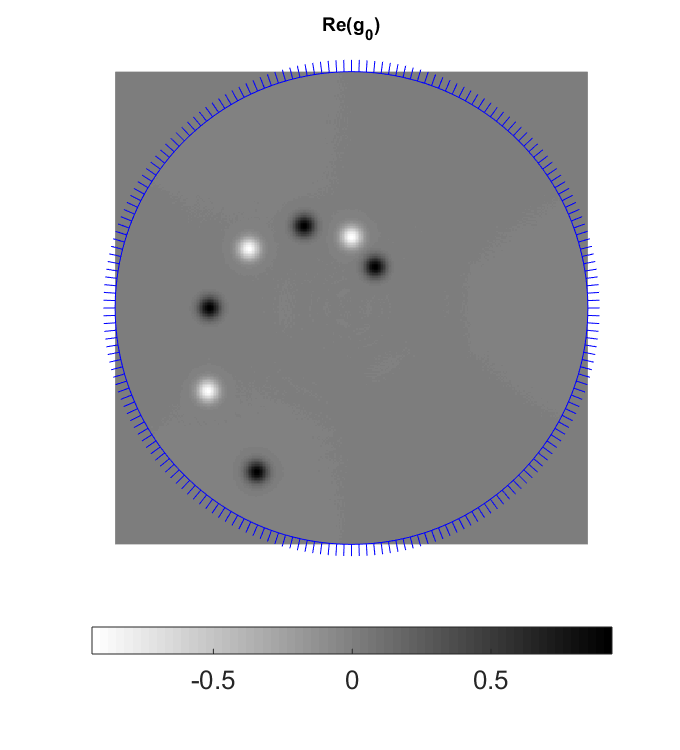}
    \includegraphics[trim = 20 0 20 50, clip, width=.3\textwidth]{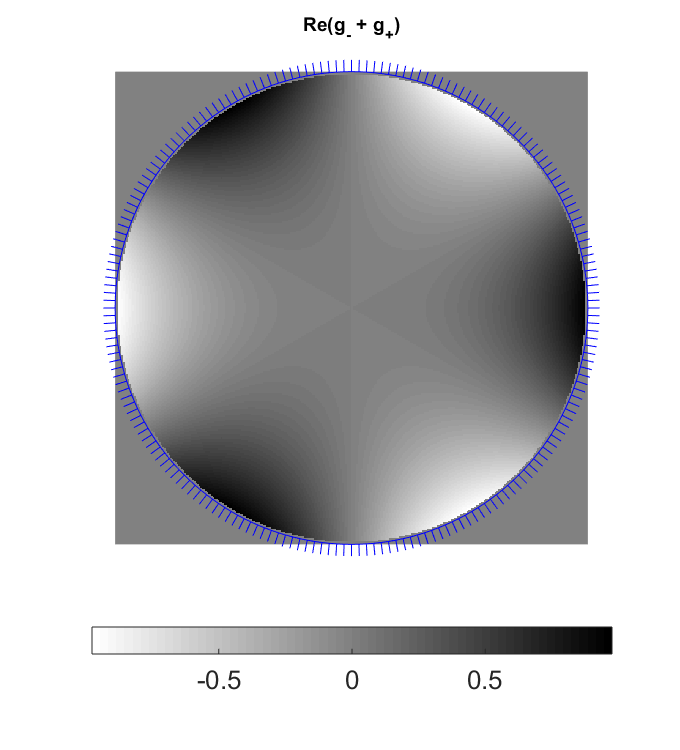}
    \includegraphics[trim = 20 0 20 50, clip, width=.3\textwidth]{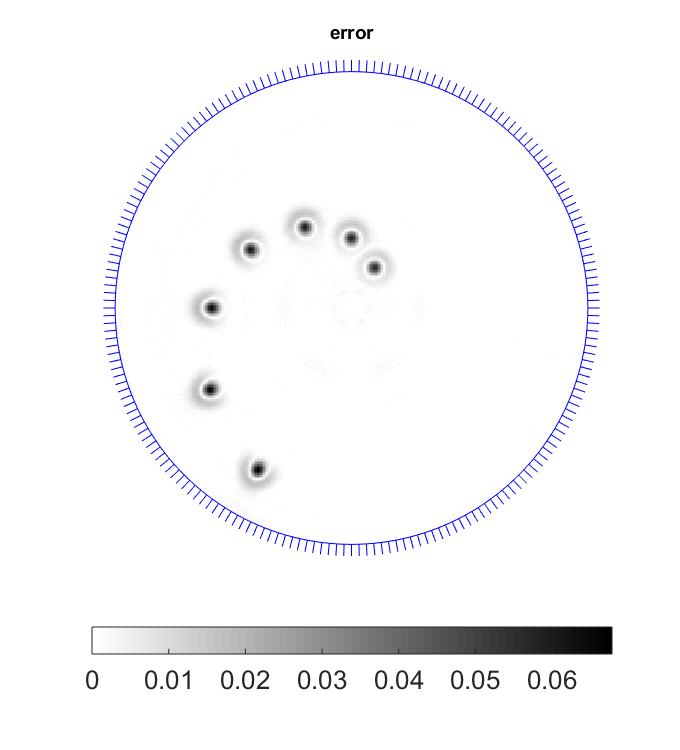}
    \caption{Exp. 2: Left to right: reconstructed $f_{(0)}$, reconstructed $f_\partial$, pointwise error on $f$. }
    \label{fig:solvf3}
\end{figure}

\subsection{Experiment 3: Reconstruction of a third-order tensor}
We now give an example of a third-order tensor $f = f_{\pm 1} e^{\pm i \theta} + f_{\pm 3} e^{\pm i3\theta}$, where again, to simplify the exposition, we assume $f$ real-valued so that $f_{-3} = \overline{f_3}$ and $f_{-1} = \overline{f_1}$. If we write $f_k = f_k^r + if_k^i$ for $k=1,3$, we obtain that the tensor $f$ takes the form
\begin{align*}
    \frac{f}{2} &= f_1^r \cos \theta - f_1^i \sin\theta + f_3^r \cos(3\theta) - f_3^i \sin (3\theta) \\
    &= (f_1^r + f_3^r)\cos^3 \theta - (f_1^i+3f_3^i) \cos^2 \theta\sin\theta  \\
    &\qquad + (f_1^r - 3f_3^r) \cos\theta\sin^2\theta + (-f_1^i+f_3^i) \sin^3\theta, 
\end{align*}
which in tensor notation corresponds to the tensor
\begin{align*}
    \frac{f}{2} &= (f_1^r + f_3^r)\ dx^{\otimes 3} - (f_1^i+3f_3^i)\ \sigma(dx^{\otimes 2} \otimes dy) \\
                       &\qquad + (f_1^r - 3f_3^r)\ \sigma(dx\otimes dy^{\otimes 2}) + (-f_1^i+f_3^i)\ dy^{\otimes 3}.
\end{align*}
An example of such functions is given Figure \ref{fig:Exp3_1}, with the ray transform of the corresponding $3$-tensor given in Figure \ref{fig:Exp3_2}. By Theorem \ref{thm:decomp}, we reconstruct an equivalent real-valued $3$-tensor of the form 
\begin{align*}
    g = X_\perp g_0 + 2 g_3^r \cos(3\theta) - 2 g_3^i \sin (3\theta), \quad g_0 \in \dot{H}^1(M), \quad (g_3^r\pm ig_3^i)e^{\pm i3\theta} \in \ker^{\pm 3} \eta_{\mp}, 
\end{align*}
according to Theorem \ref{thm:recons}. The reconstructed functions are given in Figure \ref{fig:Exp3_3}. Note that in this case, the function $g_0$ contains all visible singularities, though all of them smoother by $1$ degree than the ones of the initial tensor $f$. This is because the contribution of $g_0$ in the equivalent tensor is in fact $X_\perp g_0$ and not $g_0$ itself. 

\begin{figure}[htpb]
    \centering
    \includegraphics[trim = 20 0 20 50, clip, width=.24\textwidth]{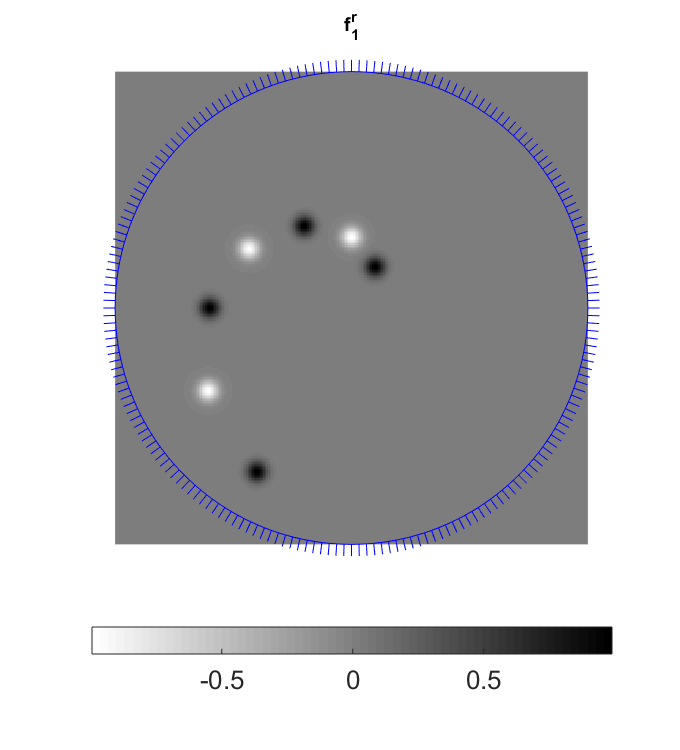}
    \includegraphics[trim = 20 0 20 50, clip, width=.24\textwidth]{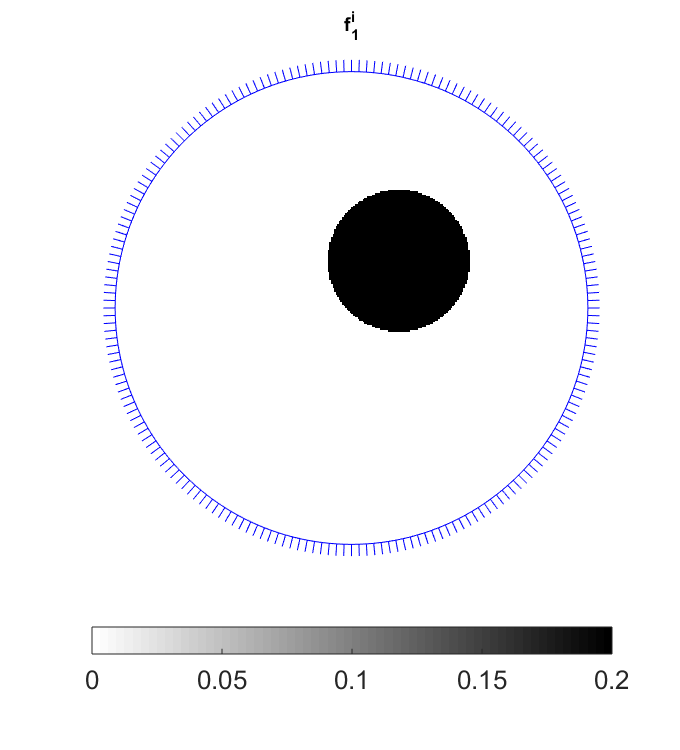}
    \includegraphics[trim = 20 0 20 50, clip, width=.24\textwidth]{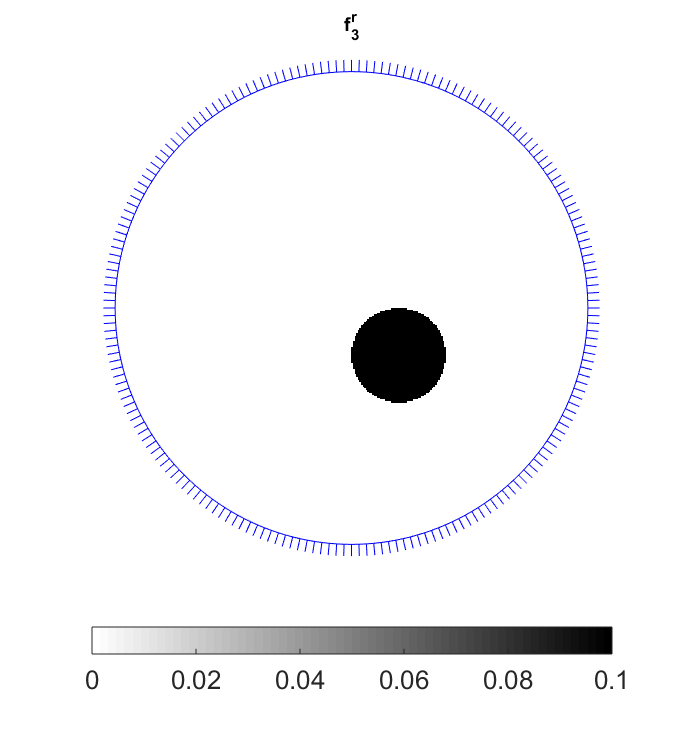}
    \includegraphics[trim = 20 0 20 50, clip, width=.24\textwidth]{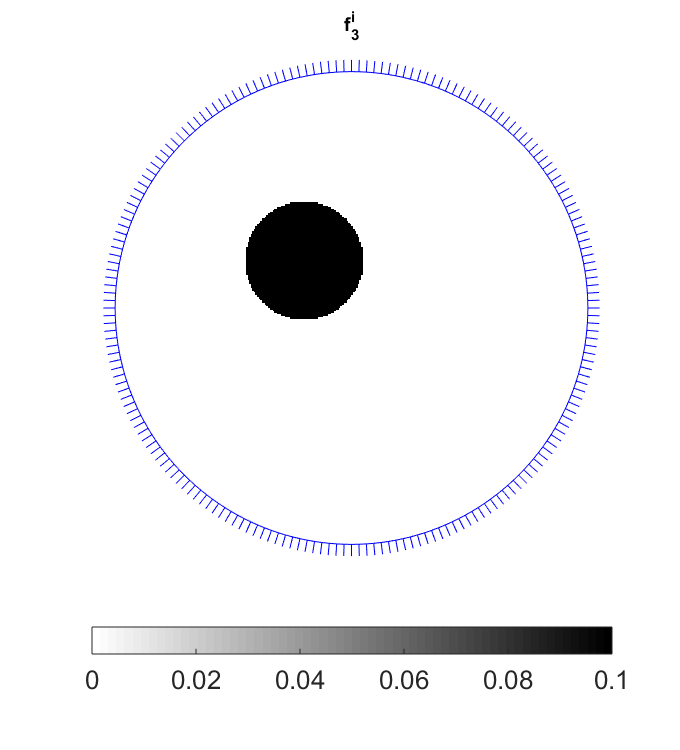}
    \caption{Exp. 3: Left to right: the functions $f_1^r$, $f_1^i$, $f_3^r$ and $f_3^i$ characterizing a real-valued third-order tensor.}
    \label{fig:Exp3_1}
\end{figure}

\begin{figure}[htpb]
    \centering
    \includegraphics[trim = 20 0 20 20, clip, width=.45\textwidth]{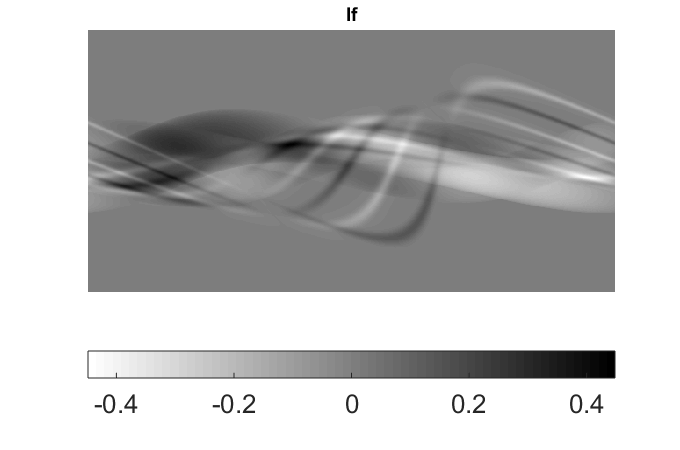}
    \caption{Exp. 3: the data $If$ of the third-order tensor defined by the functions in Fig. \ref{fig:Exp3_1}.}
    \label{fig:Exp3_2}
\end{figure}

\begin{figure}[htpb]
    \centering
    \includegraphics[trim = 20 0 20 50, clip, width=.3\textwidth]{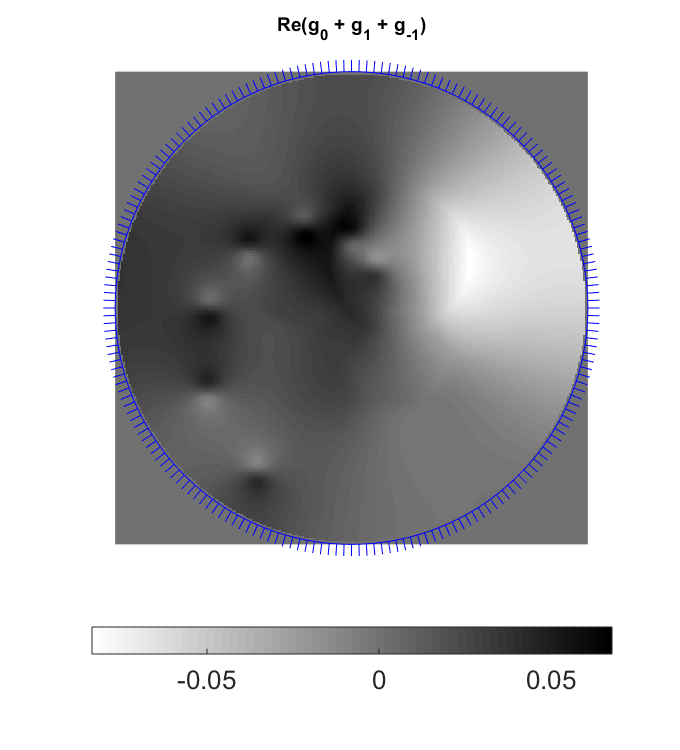}
    \includegraphics[trim = 20 0 20 50, clip, width=.3\textwidth]{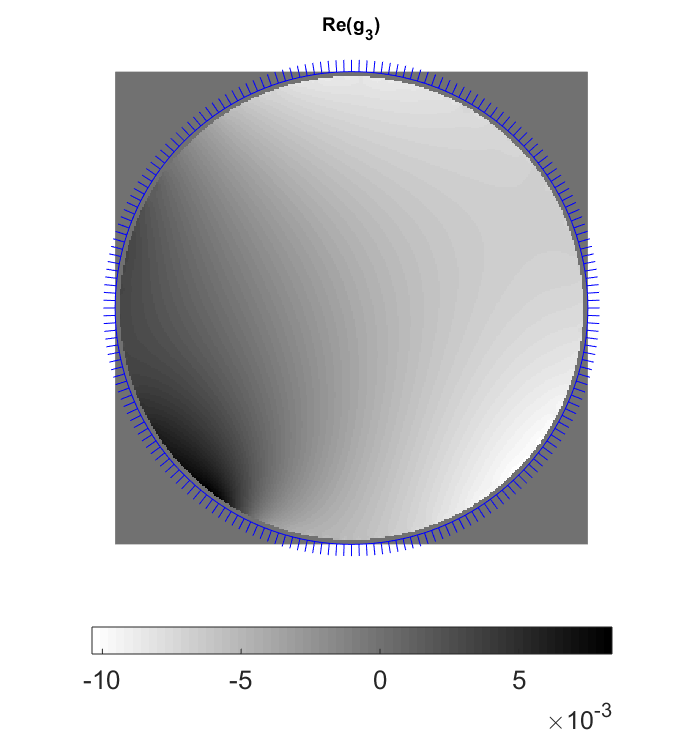}
    \includegraphics[trim = 20 0 20 50, clip, width=.3\textwidth]{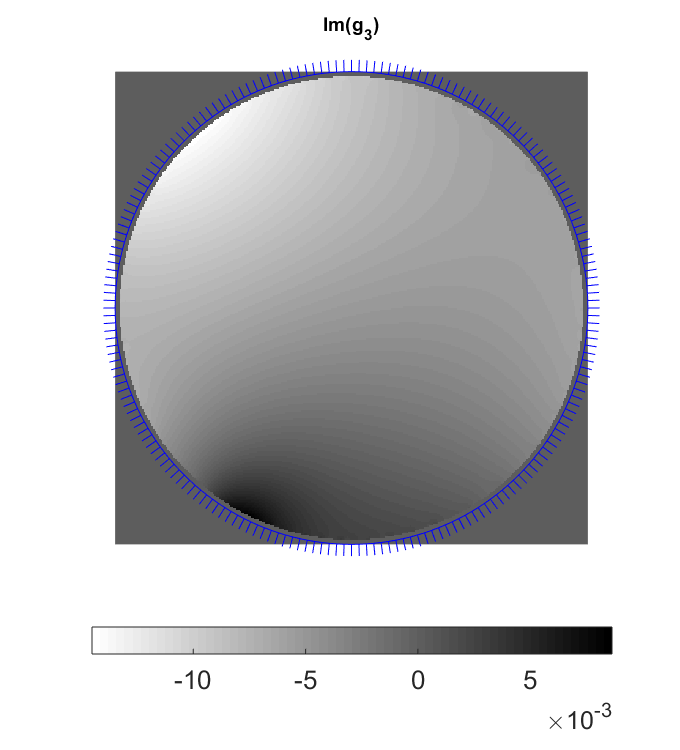}
    \caption{Exp. 3: Left to right: the functions $g_0$, $g_3^r$, $g_3^i$ of the reconstructed tensor $g = X_\perp g_0 + 2g_3^r \cos(3\theta) - 2g_3^i \sin (3\theta)$, whose ray transform equals $If$.}
    \label{fig:Exp3_3}
\end{figure}

\end{document}